\newtheorem{thm}{Theorem}[section]
\newtheorem{cor}[thm]{Corollary}
\newtheorem{prop}[thm]{Proposition}
\theoremstyle{definition}
\newtheorem{example}[thm]{Example}
\newtheorem{remark}[thm]{Remark}
\newcommand{\R}{\mathbb{R}}
\newcommand{\C}{\mathbb{C}}
\newcommand{\N}{\mathbb{N}}
\newcommand{\Q}{\mathbb{Q}}
\newcommand{\PP}{\mathbb{P}}
\newcommand{\SSS}{\mathcal{S}}
\DeclareMathOperator{\conv}{conv}
\DeclareMathOperator{\Span}{Span}
\DeclareMathOperator{\THbody}{TH}
\numberwithin{equation}{section}
\date{ }
\title{\bf Dualities in \\ Convex Algebraic Geometry}
\author{Philipp Rostalski and Bernd Sturmfels}
\begin{document}

\maketitle

\begin{abstract}
Convex algebraic geometry concerns the interplay between 
optimization theory and real algebraic geometry. Its objects of study include convex 
semialgebraic sets that arise in semidefinite programming and from sums of squares.
This article compares three notions of duality that are relevant in these
contexts: duality of convex bodies, duality of projective varieties, 
and the Karush-Kuhn-Tucker conditions derived from Lagrange duality. We show
that the optimal value of a polynomial program is an algebraic 
function whose minimal polynomial is expressed by the hypersurface 
projectively dual to the constraint set. We give an exposition of recent 
results on the boundary structure of the convex hull of a compact variety,
we contrast this to Lasserre's representation as a spectrahedral shadow,
and we explore the geometric underpinnings of semidefinite programming duality.
 \end{abstract}


\begin{figure}
 \centering
\includegraphics[width=0.55\textwidth]{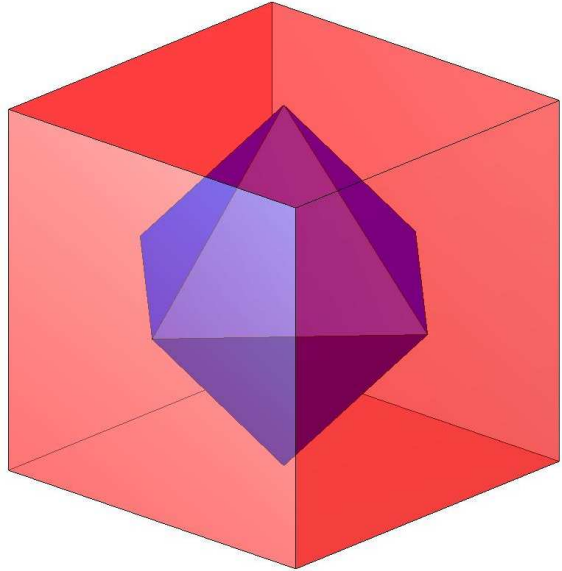}
\caption{The cube is dual to the octahedron.\label{fig::cube_and_dual}}
\end{figure}

\section{Introduction}
Dualities are ubiquitous in mathematics and its applications. 
This article compares several
notions of duality that are relevant for the interplay between
convexity, optimization,  and algebraic geometry.
It is primarily expository, and is
intended for a diverse audience, ranging from 
graduate students in mathematics to practitioners of
optimization who are based in engineering.

Duality for vector spaces lies at the heart of linear algebra and
functional analysis. Duality in convex geometry is an involution on the set of convex bodies: for instance, it 
maps the cube to the octahedron and vice versa (Figure \ref{fig::cube_and_dual}).
Duality in optimization, known as {\em Lagrange duality},
plays a key role in designing efficient algorithms for the solution
of various optimization problems. In projective geometry,
points are dual to hyperplanes, and this leads to a natural
notion of {\em projective duality} for algebraic varieties.

Our aim here is to explore these dualities and their interconnections
in the context of polynomial optimization and semidefinite programming.
Towards the end of the Introduction, we shall discuss the context and organization
of this paper. At this point, however, we jump right in and present
a concrete three-dimensional example that illustrates our perspective
on these topics.

\subsection{How to Dualize a Pillow}

We consider the following symmetric matrix with three indeterminate entries:
\begin{equation}
\label{eq:matrixQ}
 Q(x,y,z) \quad = \qquad  \begin{pmatrix} 1 & x & 0 & x \\
               x & 1 & y & 0 \\
                0 & y & 1 & z \\
                x & 0 & z & 1 \end{pmatrix}.
\end{equation}
This symmetric $4 {\times} 4$-matrix specifies a $3$-dimensional compact convex body  
\begin{equation}
\label{eq::simpleSpectrahedron}
P \,\, = \,\, \bigl\{ \,(x,y,z) \in \R^3 \,\,|\, \, Q(x,y,z) \,\succeq\, 0 \, \bigr\}.
\end{equation}
 The notation ``$\succeq 0$''  means that the matrix is {\em positive semidefinite},
i.e.,~all four eigenvalues are non-negative real numbers.  Such a {\em linear matrix inequality}
always defines a closed convex set which is 
referred to as a {\em spectrahedron}.

\begin{figure}
\vskip -0.4cm \!\!\!
\includegraphics[width=0.55\textwidth]{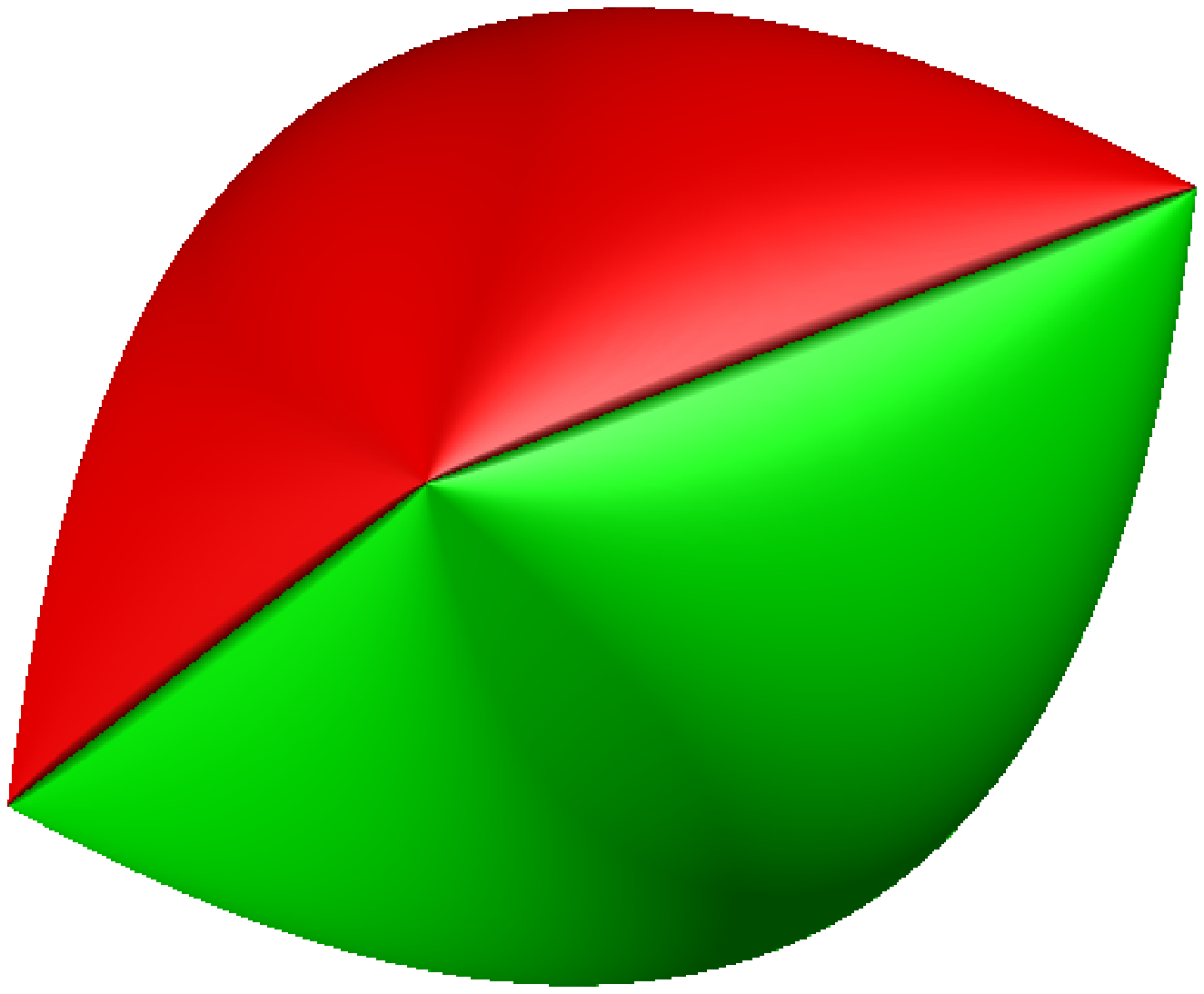} \!\!\!\!\!
\includegraphics[width=0.46\textwidth]{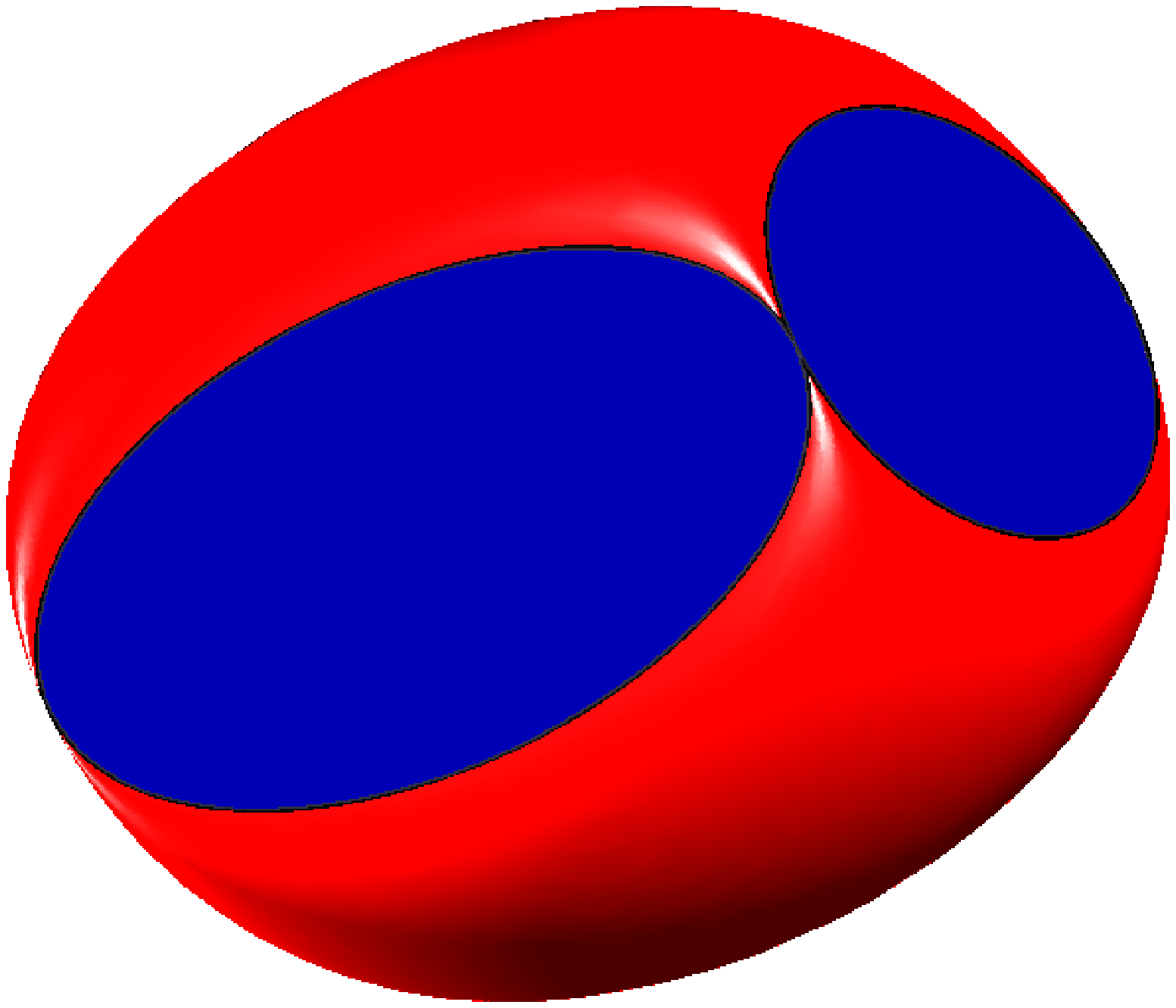}
\caption{A 3-dimensional spectrahedron $P$ and its dual convex body $P^\Delta$.\label{fig:honest_pillow}}
\end{figure}

Our spectrahedron $P$ looks like a pillow. It is shown on the left in
Figure~\ref{fig:honest_pillow}.
The {\em algebraic boundary} of $P$ is the surface specified by the determinant 
\[
\det(Q(x,y,z)) \quad = \quad
x^2(y-z)^2
  -2 x^2-y^2-z^2+1
\quad = \quad 0.
\]

The interior of $P$  represents all matrices $Q(x,y,z)$ 
whose four eigenvalues are positive.  At all smooth points on the boundary of $P$,
precisely one
eigenvalue vanishes, and the rank of the matrix $Q(x,y,z)$ drops from $4$ to $3$.
However, the rank drops further to $2$ at the four singular points
\[
(x,y,z) \,\,\,= \,\,\, 
\frac{1}{\sqrt{2}} (1,1,-1) ,\,\,
\frac{1}{\sqrt{2}} (-1,-1,1) , \,\, 
\frac{1}{\sqrt{2}} (1,-1,1) ,\,\,
\frac{1}{\sqrt{2}} (-1,1,-1).
\]
We find these from a {\em Gr\"obner basis}   of the ideal 
of $3 \times 3$-minors of $Q(x,y,z)$:
\[
\bigl\{\, 2 x^2 - 1,\, 2 z^2-1 ,\, y+z \, \bigr\} .
\]
The linear polynomial $y+z$ in this Gr\"obner basis
defines the symmetry plane of the pillow $P$.
The four corners form a square in that plane. Its edges are also edges
of $P$. All other faces of $P$ are exposed points. These come in two 
families, called {\em protrusions}, one 
above the plane $y+z = 0 $ and one below it.

Like all convex bodies, our pillow $P$ has an associated {\em dual convex body}
\begin{equation}
\label{eq::dualsimpleSpectrahedron}
P^\Delta \,\, = \,\, \left\{\, (a,b,c) \in \R^3 \,\,|\,\, ax + by + cz \leq 1\,\,\,
\hbox{for all} \,\,(x,y,z) \in P \, \right\},
\end{equation}
consisting of all linear forms that evaluate to at most one on $P$.
Our notation $P^\Delta$ is chosen to be consistent with that in Ziegler's text book
\cite[\S 2.3]{Zie}.

The dual pillow $P^\Delta$ is shown on the right in Figure~\ref{fig:honest_pillow}.
Note the association of faces under duality. The pillow
$P$ has four $1$-dimensional faces, four singular $0$-dimensional faces, and two 
smooth families of 
$0$-dimensional faces. The corresponding dual faces of $P^\Delta$ have dimensions $0$, $2$ and $0$  respectively.

{\em Semidefinite programming} is the computational problem of minimizing
a linear function over a spectrahedron.
For our pillow $P$, this takes the form
\begin{equation}
\label{eq::simpleSDPp}
\begin{aligned}
p^*(a,b,c)\,\,=& \quad \underset{(x,y,z)\in \R^3}{\text{Maximize}} & & a x + b y + c z \\
& \text{subject to}
& & Q(x,y,z) \succeq 0.
\end{aligned}
\end{equation}
We regard this as a parametric optimization problem:
we are interested in the optimal value and optimal solution of
(\ref{eq::simpleSDPp}) as a function of $(a,b,c) \in \R^3$.
This function can be expressed in terms of the dual body $P^\Delta$ as follows:
\begin{equation}
\label{eq::simpleSDPlambda}
\begin{aligned}
p^*(a,b,c)\,\,=& \quad \underset{\lambda \in\R }{\text{Minimize}} & & \lambda\\
& \text{subject to}
& & \frac{1}{\lambda} \cdot (a,b,c) \,\in\, P^\Delta.
\end{aligned}
\end{equation}

We distinguish this formulation from the duality in
semidefinite programming. The
{\em dual} to (\ref{eq::simpleSDPp}) is the following program with $7$ 
decision variables:
\begin{equation}
\label{eq::simpleSDPd}
\begin{aligned}
d^*(a,b,c)\,\,=&\quad \underset{u \in \R^7}{\text{Minimize}} & & u_1 + u_4 + u_6 + u_7 \\
& \text{subject to}
& & \!\!\!\! \begin{pmatrix}
 2 u_1 & 2 u_2 & & u_3 & - 2 u_2 {-} a \, \\
 2 u_2 & 2 u_4 & & -b & 2 u_5 \\
 2 u_3 & - b & & 2 u_6 & - c \\
 -2 u_2 {-} a & 2 u_5 & & - c & 2 u_7 
\end{pmatrix} \, \succeq \, 0.
\end{aligned}
\end{equation}
Since (\ref{eq::simpleSDPp}) and (\ref{eq::simpleSDPd}) are both strictly feasible, strong duality holds \cite[\S 5.2.3]{BV}, i.e. the two programs attain the same optimal value: $\,p^*(a,b,c)=d^*(a,b,c)$. 
Hence, problem~\eqref{eq::simpleSDPd} can be derived from \eqref{eq::simpleSDPlambda}, as we 
shall see in Section~\ref{sec::sdp}.

We write $M(u; a,b,c)$ for the $4 {\times} 4$-matrix in (\ref{eq::simpleSDPd}).
 The following equations and inequalities, known as the
{\em Karush-Kuhn-Tucker conditions} (KKT), are necessary and sufficient
for any pair of optimal solutions:
\begin{align*}
Q(x,y,z) \cdot M(u; a,b,c)& \, = \,0,\qquad (\text{complementary slackness}) \\
Q(x,y,z) & \, \succeq \, 0,\\
M(u;a,b,c) & \, \succeq  \, 0.
\end{align*} 
We relax the inequality constraints
and consider the system of  equations
\[
\lambda = ax + by + cz  \quad \hbox{and} \quad Q(x,y,z) \cdot M(u; a,b,c) \, = \,0. 
\]
This is a system of $11$ equations. Using computer algebra,
we eliminate the $10$ unknowns $\,x,y,z, u_1,\ldots,u_7$.
The result is a polynomial in $a,b,c$ and~$\lambda$.
Its factors, shown in (\ref{eq:lambdadual1})-(\ref{eq:lambdadual2}),
 express the optimal value $\lambda^*$  in terms of~$a,b,c$.
 
At the optimal solution, the product of the two
$4 {\times} 4$-matrices $Q(x,y,z)$ and $M(u; a,b,c)$ is zero,                   
and their respective ranks are either $(3,1)$ or $(2,2)$. 
In the former case the optimal value $\lambda^*$ is one of the two solutions of                                  
 \begin{equation}
 \label{eq:lambdadual1} \!
 (b^2+2bc+c^2) \cdot \lambda^2-a^2b^2-a^2c^2-b^4-2b^2c^2-2bc^3-c^4-2b^3c \,\,=\,\, 0.                                                                 
 \end{equation}
 In the latter case it comes from the four corners of the pillow, and it satisfies
 \begin{equation}
 \label{eq:lambdadual2} 
 \begin{matrix} 
& (2 \lambda^2-a^2+2 a b-b^2+2 b c-c^2-2 a c)  &
\\ \cdot \!\!\!\! & (2 \lambda^2-a^2-2ab-b^2+2bc-c^2+2ac) & \,\,=  \quad 0.
 \end{matrix}
\end{equation}
These two equations  describe the algebraic boundary of the dual body $P^\Delta$. 
Namely, after setting $\lambda = 1$, the irreducible polynomial in (\ref{eq:lambdadual1})
describes the  quartic surface that makes up the curved part of the
boundary of $P^\Delta$, as seen in  Figure \ref{fig:honest_pillow}. In addition,
there are four planes spanned by flat $2$-dimensional faces of $P^\Delta$.
The product of the four corresponding affine-linear forms equals (\ref{eq:lambdadual2}).
Indeed, each of the two quadrics in (\ref{eq:lambdadual2}) factors into two linear factors.
These two characterize the planes spanned by opposite $2$-faces of $P^\Delta$.

The two equations (\ref{eq:lambdadual1}) and (\ref{eq:lambdadual2}) also
offer a first glimpse at the concept of {\em projective duality} in algebraic
geometry. Namely, consider the surface in projective
space $\PP^3$ defined by ${\rm det}(Q(x,y,z)) = 0$ after replacing the ones along the diagonals by a homogenization variable.
Then  (\ref{eq:lambdadual1}) is its dual surface in the dual projective space
$(\PP^3)^*$. The surface (\ref{eq:lambdadual2}) in $(\PP^3)^*$ is dual to the
$0$-dimensional variety in $\PP^3$ cut out by the $3 {\times} 3$-minors of $Q(x,y,z)$.

The optimal value function of the optimization problem  (\ref{eq::simpleSDPp})
is given by the algebraic surfaces dual to the boundary of $P$ and its singular locus.
We have seen two different ways of dualizing (\ref{eq::simpleSDPp}):
the dual optimization problem (\ref{eq::simpleSDPd}), and 
the optimization problem (\ref{eq::simpleSDPlambda}) on $P^\Delta$.
These two formulations are related as follows. If we regard
(\ref{eq::simpleSDPd}) as specifying a $10$-dimensional spectrahedron,
then the dual pillow $P^\Delta$ is a projection of that spectrahedron:
\[
P^\Delta \,\, = \,\,
\bigl\{(a,b,c) \in \R^3 \,| \,
\exists u \in \R^7 \,: \, M(u;a,b,c) \succeq 0 \,\,\text{and} \,\,
u_1 + u_4 + u_6 + u_7 \,= \,1 \bigr\}.
\]
Linear projections of spectrahedra are called
{\em spectrahedral shadows}. These objects play a prominent
role in the interplay between semidefinite programming and  convex algebraic geometry.
The dual body to a spectrahedron is generally not a spectrahedron, but it
is always a spectrahedral shadow.

\subsection{Context and Outline}

Duality is a central concept in convexity and convex optimization,
and numerous authors have written about their connections and their interplay
with other notions of duality and polarity. Relevent references include
 Barvinok's text book \cite[\S 4]{Bar} and the survey  by
Luenberger \cite{Lue}. The latter focuses on dualities used in engineering,
such as duality of vector spaces, polytopes, graphs, 
 and  control systems.
 The objective of this article is to revisit the theme of 
 duality in the context of {\em convex algebraic geometry}. This emerging
 field aims to exploit algebraic structure in convex optimization problems,
 specifically in semidefinite programming and polynomial optimization.
In algebraic geometry, there is a natural notion of projective duality,
which associates to every algebraic variety a dual variety. One of our goals is to explore the meaning of projective
duality for optimization theory.

Our presentation is organized as follows. In Section~\ref{sec::ingredients} we cover
 preliminaries needed for the rest of the paper. Here
the various dualities are carefully defined and their basic properties
are illustrated by means of examples. In Section~\ref{sec::opt_value} we derive the
result that the optimal value function of a polynomial program is
represented by the defining equation of the hypersurface 
projectively dual to the manifold describing the boundary of all feasible solutions.
This highlights the important fact that the duality best
known to algebraic geometers
arises very naturally in convex optimization.
Section~\ref{sec::convex_hull} concerns the convex hull of a compact algebraic variety
in $\R^n$. We discuss recent work of Ranestad and Sturmfels
\cite{RS1, RS2} on the hypersurfaces in the boundary of
such a convex body, and we present several new examples
and applications. 

In Section~\ref{sec::sdp} we focus on semidefinite programming (SDP), and we offer
a concise geometric introduction to SDP duality.
This leads to the concept of algebraic degree of SDP
\cite{BR, NRS}, or, geometrically, to projective duality for 
varieties defined by rank constraints on symmetric matrices of linear forms.

A {\em spectrahedral shadow} is the image of a spectrahedron under 
a linear projection. Its dual body is a linear section
of  the dual body to the spectrahedron. In Section~\ref{sec::spec_shadow} we examine
this situation in the context of sums-of-squares programming, and
we discuss linear families of non-negative polynomials.

\section{Ingredients}
\label{sec::ingredients}
In this section we review the mathematical preliminaries needed for the
rest of the paper, we give precise definitions,
and we fix more of the notation. We begin with the notion
of duality for vector spaces and cones therein, then move on to
convex bodies,  polytopes,  Lagrange duality in optimization,
the KKT conditions, projective duality in algebraic geometry,
   and discriminants.

\subsection{Vector Spaces and Cones}

We fix an ordered field $K$. The primary example is the field of real numbers, $K = \R$, 
but it makes much sense to also allow other fields,
such as the rational numbers $K = \Q\,$ or the real Puiseux series $K = \R \{ \! \{ \epsilon \} \! \}$.
For a finite dimensional $K$-vector space $V$, the {\rm dual vector space} is the set $V^*={\rm Hom}(V,K)$ of all linear forms on $V$. Let $V$ and $W$ be vector spaces and $\varphi: V \rightarrow W$ a linear map. The \emph{dual map} $\varphi^*: W^* \rightarrow V^*$ is
the linear map defined by $\varphi^*(w) = w \circ \varphi \in V^*$ for every $w\in W^*$. 
If we fix bases of $V$ and $W$ then $\varphi$ is represented by a matrix $A$.
The dual map $\varphi^*$ is represented, relative to the dual bases for $W^*$ and $V^*$,
 by the {\em transpose} $A^t$ of the matrix~$A$.
 
A subset $C\subset V$ is a {\em cone} if it is closed under
multiplication with positive scalars. A cone $C$ need not be convex, but its {\em dual cone}
\begin{equation}
\label{eq:dual_cone}
C^* \,\, = \,\, \left\{ \,l \in V^* \,\,|\,\,\forall x\in C \,:\, l(x)\geq 0 \,\, \right\}
\end{equation}
is always closed and convex in $V^*$. If $C$ is a  \emph{convex cone} 
then the second dual $(C^*)^*$ is the closure of $C$.
 Thus, if $C$ is a closed convex cone in $V$ then
\begin{equation}
\label{eq:conebiduality}
   (C^*)^* \,\, = \,\, C . 
   \end{equation}
This important relationship is referred to as  \emph{biduality}. 

Every linear subspace $L \subset V$ is also a cone. Its dual cone
is the orthogonal complement of the subspace:
\[
L^* \,\, = \,\, L^\perp \,\,= \,\, \left\{\, l \in V^*\, \,|\,\, \forall x \in L \,: \, l(x)= 0 \; \right\}.
\]
The dual map to the inclusion $L \subset V$ is the projection
$\pi_L: V^* \rightarrow V^*/L^\perp$.
 Given any cone $ C \subset V$, the intersection $C \cap L$ is a cone in $L$.
 Its dual cone $(C\cap L)^*$  is the  projection of the cone $C$ into 
 $V^*/L^\perp$. More precisely,
 \[
 (C\cap L)^* \,=\, C^*+ L^\perp
\quad \hbox{in} \,\,\, V^*.
\]
Now, it makes sense to consider this convex set modulo $L^\perp$. We thus obtain
\begin{equation}
 \label{eq:intersectproject}
(C \cap L)^* \,\,=  \,\,\overline{\pi_L(C^*)}
\quad\, \hbox{in} \,\,\,  V^*/L^\perp.
\end{equation}
This identity shows that projection and intersections are dual operations.

A subset $F\subseteq C$ of a convex set $C$ is a {\em face} 
if $F$ is itself convex and contains any line segment $L\subset C$ whose relative interior intersects $F$. We say that $F$ is an {\em exposed face} if there exists a linear functional $l$ that attains its minimum over $C$ 
precisely at $F$.  Clearly, an exposed face is a face, but the converse does not hold.
For instance, the edges of the red triangle in Figure~\ref{fig::trigonometric} are
non-exposed faces of the $3$-dimensional convex body shown there.

An exposed face $F$ of a cone $C$ 
determines a face of the dual cone $C^*$ via
\[
F^\diamond \,\,= \,\, \{ \, l \in C^* \,\,|\,\, l \text{ attains its minimum over $C$ at $F$ }\}\,.
\]
The dimensions of the faces $F$ of $C$ and $F^\diamond$ of $C^*$ satisfy
the inequality
\begin{equation}
\label{eq:dimformula}
 {\rm dim}(F) \, + \,{\rm dim} (F^\diamond) \,\,\leq \,\, {\rm dim}(V). 
 \end{equation}
 If $C$ is a polyhedral cone then $C^*$ is also polyhedral.
 In that case, the number of faces $F$ and $F^\diamond$ is finite and 
 equality holds in (\ref{eq:dimformula}). On the other hand,
 most cones considered in this article are not polyhedral, they have
 infinitely many faces, and the inequality in (\ref{eq:dimformula}) is
 usually strict.  For instance, the {\em second order cone}
 $\,C\, = \, \{ \,(x,y,z) \in \R^3 \,:\, \sqrt{x^2 + y^2}  \leq z \} \,$
 is self-dual,  each proper face $F$ of $C$ is $1$-dimensional,
 and  the formula (\ref{eq:dimformula}) says  $1 + 1 \leq 3$.
 
\subsection{Convex Bodies and their Algebraic Boundary}
\label{subsec::convbody}
A {\em convex body} in $V$ is a full-dimensional convex set that is closed and bounded. If $C$
is a cone and $z \in  {\rm int} (C^*)$ then  $C\cap \{z=1\}$ is a convex body
in the hyperplane $\{\, z = 1 \,\}$ of $V$. In this manner, every pointed
$d$-dimensional cone gives rise to a $(d{-}1)$-dimensional convex body,
and vice versa. These transformations, known as {\em homogenization} 
and {\em dehomogenization}, respect faces and algebraic boundaries.
 They allow us to go back and fourth between convex bodies
and  cones in the next higher dimension. For instance, the $3$-dimensional body $P$
 in (\ref{eq::simpleSpectrahedron})
corresponds to the cone in $\R^4$ we get by multiplying the constants
$1$ on the diagonal in (\ref{eq:matrixQ}) with a new variable.

Let $P$  be a full-dimensional convex body in $V$ and assume that $0 \in {\rm int} (P)$.
Dehomogenizing the definition for cones, we obtain the {\em dual convex body} 
\begin{equation}
\label{eq:Pdelta}
P^\Delta \,\,= \,\, \left\{ \,\ell \in V^* \,|\, \; \forall x\in P \,:\, \ell(x)\leq 1\, \right\}.
\end{equation}
This is derived from \eqref{eq:dual_cone}
using the identification $l(x)=z-\ell(x)$ for $z=1$.

Just as in the case of convex cones, if $P$ is closed then biduality holds:
\[ (P^\Delta)^\Delta \,\, = \,\, P .\]
The definition (\ref{eq:Pdelta}) makes sense for arbitrary subsets $P$  of $V$.
That is, $P$ need not be convex or closed.
A standard fact from convex analysis \cite[Cor.~12.1.1 and \S14]{Rock}
says that the double dual is the closure of the convex~hull:
\[
(P^\Delta)^\Delta \,\,\, = \,\,\, \overline{\conv(P\cup {0})}.
\]

All convex bodies discussed in this article are {\em semialgebraic},
that is, they can be described by polynomial inequalities. 
We note that
if $P$ is semialgebraic then its dual body $P^\Delta$ is also semialgebraic.
This is a consequence of
Tarski's theorem on quantifier elimination in real algebraic geometry \cite{BPR, BCR}.

The {\em algebraic boundary} of a semialgebraic convex body $P$, denoted
$\partial_a P$, is the  smallest algebraic variety that contains the boundary $\partial P$.
In geometric language, $\partial_a P$ is the  {\em Zariski closure} of $\partial P$.
It is identified with the squarefree polynomial $f_P$ that vanishes 
on $\partial P$. Namely,
 $\,\partial_a P  = V(f_P)\,$ is the zero set of the polynomial $f_P$.
 Note that $f_P$ is unique up to a multiplicative constant.
 Thus $\partial_a P$ is an algebraic hypersurface which
 contains the boundary  $\partial P$.
 
A {\em polytope} is the convex hull of a finite subset of $V$.
If $P$ is a polytope then so is its dual $P^\Delta$ \cite{Zie}. The boundary of $P$
consists of finitely many {\em facets} $F$. These are the 
faces $F = v^\diamond$ dual to the {\em vertices} $v$ of $P^\Delta$.
The algebraic boundary $\partial_a P$ is the arrangement of
hyperplanes spanned by the facets of $P$. Its defining polynomial
$f_P$ is the product of the linear polynomials $v-1$.

\begin{example}\label{ex::poly1}
A polytope known to everyone is the three-dimensional cube
\[
P \,\,=\,\, \conv\{(\pm 1,\pm 1, \pm 1)\} \,\,= \,\,\{ -1 \leq x,y,z\leq 1\}.
\]
 Figure~\ref{fig::cube_and_dual} illustrates the familiar fact that
its dual polytope is the octahedron
\[
P^\Delta \,\,= \,\, \{-1 \leq a\pm b\pm c\leq 1\} 
\,\,= \,\, \conv \{ \pm e_1, \pm e_2, \pm e_3 \}.\]
Here $e_i$ denotes the $i$th unit vector.
The eight vertices of $P$ correspond to the facets of $P^\Delta$,
and the six facets of $P$ correspond to the vertices of $P^\Delta$.
The algebraic boundary of the cube is described by a degree $6$ polynomial
\[
\partial_a P \,\, = \,\,V\left((x^2-1)(y^2-1)(z^2-1)\right).
\]
The algebraic boundary of the octahedron is given by a degree $8$ polynomial
\[
\partial_a P^\Delta \,\,= \,\, V\left(\prod (1 - a\pm y\pm c) \prod(a\pm b\pm c+1)\right).
\]
Note that $P$ and $P^\Delta$ are the unit balls for the norms $L_\infty$ and $L_1$ on $\R^3$. \qed
\end{example}

Recall that the {\em $L_p$-norm} on $\R^n$ is defined by
$\,\| x \|_p \,= \,(\sum_{i=1}^n |x_i|^p)^{1/p} \,$ for $x \in \R^n$. The {\em dual norm}
to the $L_p$-norm
is the $L_q$-norm  for $\frac{1}{p}+\frac{1}{q} = 1$, that is,
\[
\|y\|_q \,\,= \,\, {\rm sup} \{\langle y,x\rangle \,|\, x \in \R^n,\, \|x\|_p \leq 1 \}.
\]
Geometrically,  the unit balls for these norms are dual as convex~bodies.
 
\begin{figure}
\!\!\!\!\!
\includegraphics[width=0.56\textwidth]{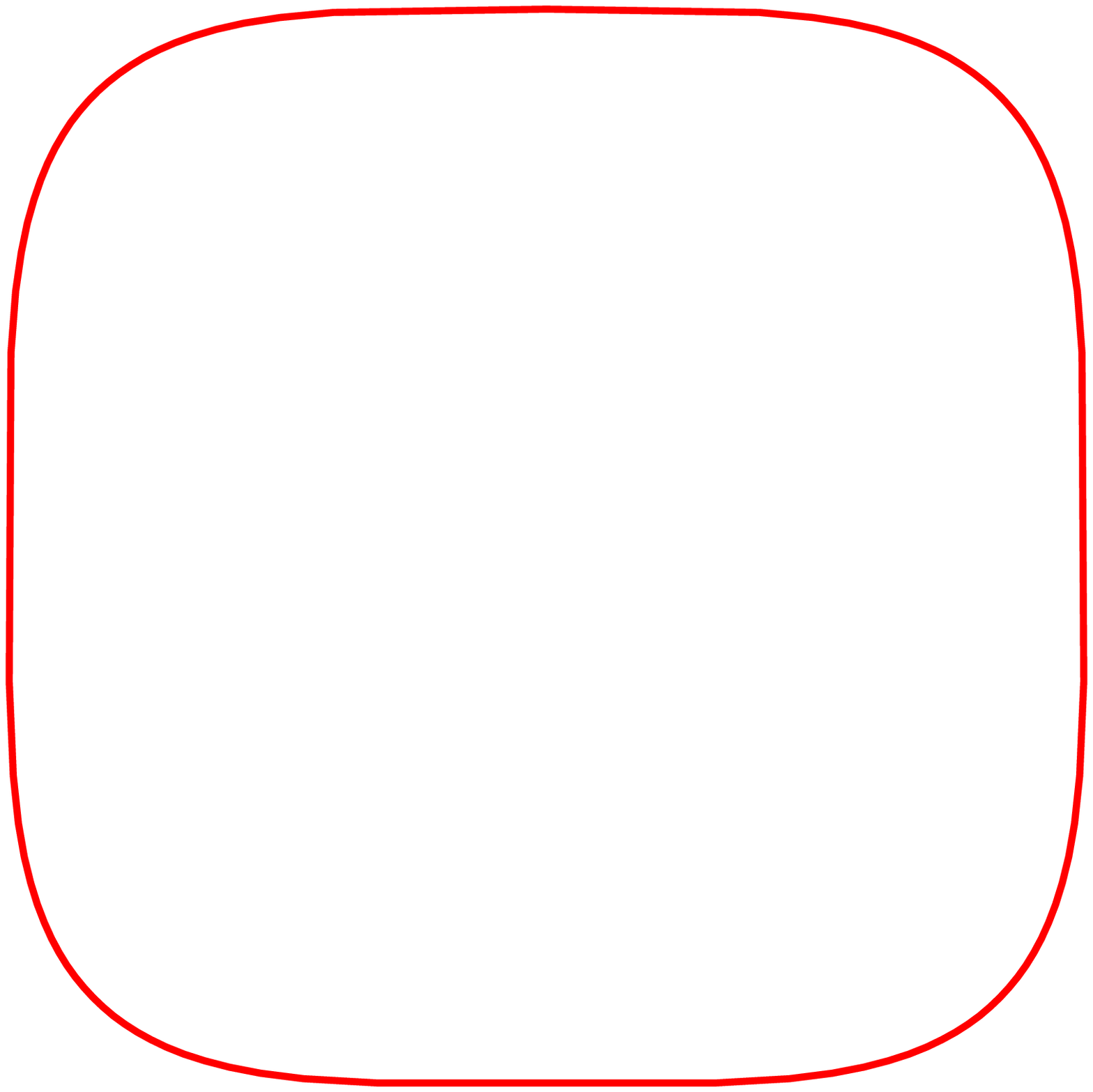} \!\!\!\!\!\!\!\!\!
\includegraphics[width=0.5\textwidth]{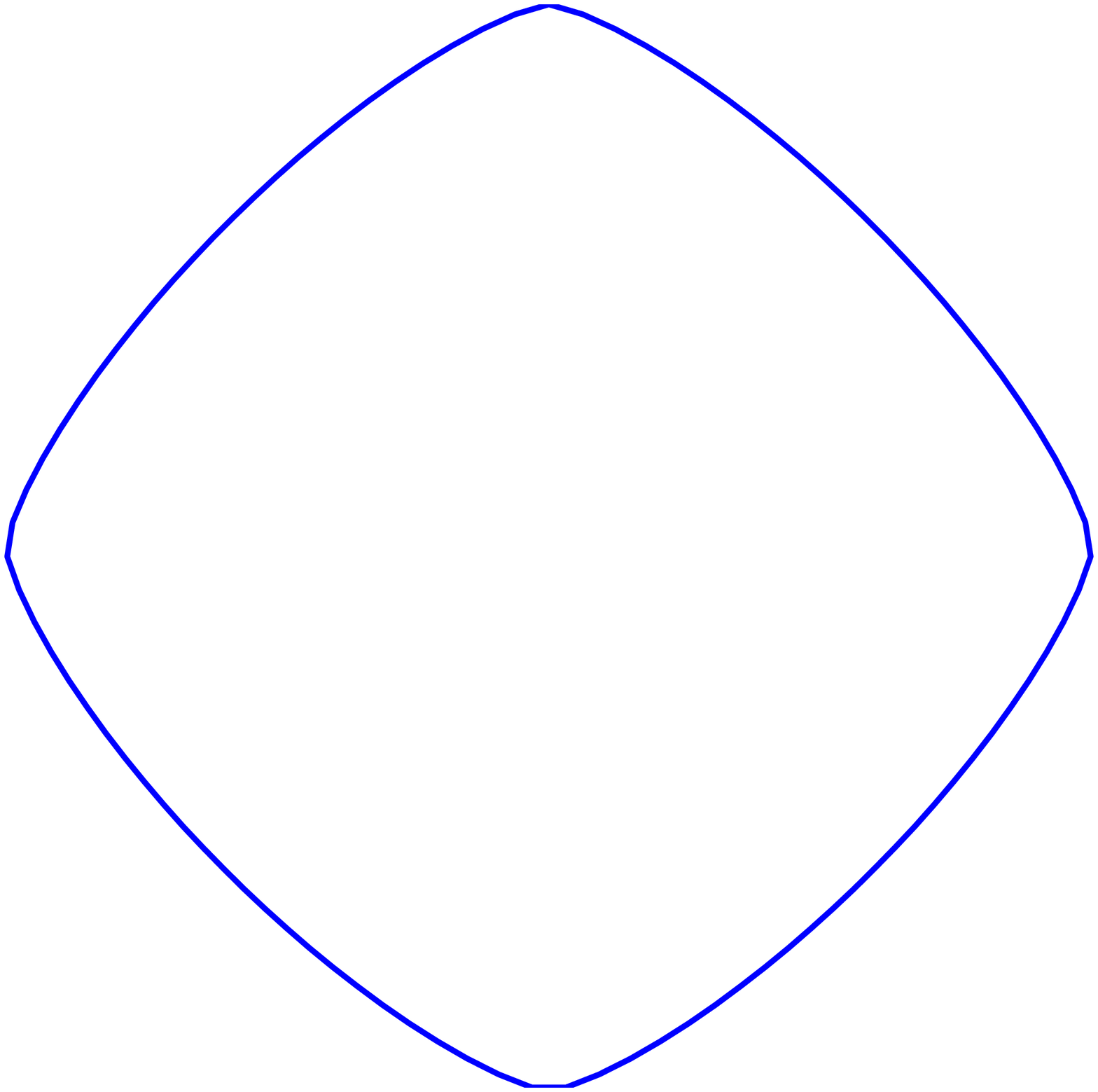}
\vskip -0.34cm
\caption{The unit balls for the $L_4$ norm and the $L_{4/3}$ norm are dual.
The curve on the left has degree $4$, while its dual curve on the right has degree~$12$.}
\label{fig::norm}
\end{figure}

\begin{example} 
\label{ex:fourtwelve} 
 Consider the case $n=2$ and $p=4$. Here the unit ball equals
  \[ P \,\,= \,\,\{ \,(x,y) \in \R^2 \,: x^4+y^4 \leq 1\,\} . \]
 This planar convex set is shown in Figure~\ref{fig::norm}.
 In this example, since the curve is convex,
 the ordinary boundary coincides with the algebraic boundary,
 $\, \partial_a P \, = \, \partial P $,
 and is represented by the defining quartic polynomial $\,  x^4+y^4 -1$.
 
The dual body is the unit ball for the $L_{4/3}$-norm on $\R^2$:
\[
P^\Delta \,\,= \,\, \{ (a,b) \in \R^2 \,:\, |a|^{4/3}+|b|^{4/3} \leq 1\}\,.
\]
The algebraic boundary of $P^\Delta$ is an irreducible algebraic curve of degree $12$,
\begin{equation}
\label{eq:curvedeg12}
\partial_a P^\Delta \,= \,
 V \! \left(a^{12}{+}3 a^8 b^4{+}3 a^4 b^8{+}b^{12}{-}3 a^8 {+}21 a^4 b^4 {-} 3 b^8
 {+}3 a^4 {+}3 b^4 {-}1\right) \! , 
 \end{equation}
which again coincides precisely with the (geometric) boundary $\partial P^\Delta$.
This dual polynomial is easily produced by the following one-line program in 
the computer algebra system \ {\tt Macaulay2} \ due to Grayson and Stillman
 \cite{M2}:
\begin{verbatim}
R = QQ[x,y,u,v]; eliminate({x,y},ideal(x^4+y^4-1,x^3-u,y^3-v))
\end{verbatim}
In Subsection 2.4 we shall introduce the
algebraic framework for
performing such duality computations, not just for curves, but for arbitrary varieties.
\qed
\end{example}

\subsection{Lagrange Duality in Optimization}

We now come to a standard concept of duality in optimization theory. Let us
consider the  following general nonlinear polynomial optimization problem:
\begin{equation}
\label{eq::nonlinearOPT}
\begin{aligned}
& \underset{x}{\text{minimize}} & &f(x) \\
& \text{subject to}
& & g_i(x)\leq 0, \quad i=1,\ldots,m,\\
& & & h_j(x)= 0, \quad j=1,\ldots,p.
\end{aligned}
\end{equation}
Here the $g_1,\ldots,g_m, h_1,\ldots,h_p$ and $f$ are
polynomials in $\R[x_1,\ldots,x_n]$.
The {\em Lagrangian} associated to the optimization problem (\ref{eq::nonlinearOPT}) is
the function
\[ \begin{matrix}
& L\,: \, \R^n \times \R_+^m \times \R^p & \rightarrow &  \R^n \qquad \qquad \qquad  \quad
\qquad \qquad \qquad \\ & \qquad
(x,\lambda,\mu) & \mapsto & f(x)+\sum_{i=1}^{m} \lambda_i g_i(x) +\sum_{j=1}^{p} \mu_j h_j(x)
\end{matrix}
\]
The scalars $\lambda_i\in \R_+$ and $\mu_j \in \R$ are the {\em Lagrange multipliers} for the constraints $g_i(x) \leq 0$ and $h_j(x)=0$. The Lagrange function $L(x,\lambda,\mu)$ can be interpreted as an augmented cost function with penalty terms for the constraints.
For more information on the above formulation see  \cite[\S 5.1]{BV}.

One can show that problem~\eqref{eq::nonlinearOPT} is equivalent to finding
\begin{equation*}
u^* \,\,= \,\, \underset{x \in \R^n}{\text{minimize}}\; \underset{\mu \in \R^p \,{\rm and}\, \lambda\geq 0}{\text{maximize}}\quad L(x,\lambda,\mu).
\end{equation*}
The key observation here is
that any positive evaluation of one of the polynomials
$g_i(x)$, or any non-zero evaluation of one of the polynomials $h_j(x)$,
would render the inner optimization problem unbounded.

The {\em dual optimization problem} to \eqref{eq::nonlinearOPT} is
obtained by exchanging the order of the two nested optimization subproblems
in the above formulation:
\begin{equation*}
\begin{aligned}
v^*\,\,=\,\, \underset{\mu  \in \R^p \,{\rm and}\, \lambda\geq 0}{\text{maximize}}\;\underbrace{\underset{x \in \R^n}{\text{minimize}}\quad L(x,\lambda,\mu)}_{\phi(\lambda,\mu)}.
\end{aligned}
\end{equation*}
The function $\phi (\lambda,\mu)$ is known as the
 \emph{Lagrange dual function} to our problem.
This function is always concave, so the dual is
always a convex optimization problem.
It follows from the definition of the dual function that $\phi(\lambda,\mu) \leq u^*$ for all $\lambda,\mu$. Hence
the optimal values satisfy the inequality
\[ 
v^* \,\,\leq \,\, u^*. 
\]
If equality happens, $v^* = u^*$, then we say that {\em strong duality} holds.
A necessary condition for strong duality is  $\,\lambda_i^* g_i(x^*)=0$ for all $i=1,\ldots,m$, where $x^*,\lambda^*$ denote the primal and dual optimizer. We see this by inspecting the Lagrangian and taking into account the fact that $h_j(x)=0$ for all feasible~$x$.

Collecting all inequality and equality constraints in the primal and dual optimization problems yields the following  optimality conditions:
\begin{thm}[Karush-Kuhn-Tucker (KKT) conditions]
Let $(x^*,\lambda^*,\mu^*)$ be primal and dual optimal solutions with $u^*=v^*$ (strong duality). Then
\begin{align}
\label{eq::KKT}
\!\!\! \nabla_x f\Big|_{x^*} + \,\sum_{i=1}^{m} \lambda_i^*  \cdot \nabla_x g_i\Big|_{x^*}
 + \,\sum_{j=1}^{p} \mu_j^* \cdot \nabla_x  h_j \Big|_{x^*}&\,= \,\, 0,\nonumber\\
g_i(x^*)&\,\leq \,\,0 \quad {\rm for} \,\, i=1,\ldots,m, \nonumber\\
\lambda_i^* &\,\geq \,\,0 \quad {\rm for} \,\, i=1,\ldots,m, \nonumber\\
h_j(x^*)&\, = \,\, 0 \quad {\rm for} \,\, j=1,\ldots,p, \\
\text{Complementary slackness:} \quad \lambda^*_i \cdot g_i(x^*)&\, = \,\, 0
 \quad {\rm for} \,i=1,\ldots,m. \nonumber
\end{align}
\end{thm}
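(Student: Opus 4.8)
The plan is to extract all five groups of conditions from a single chain of (in)equalities that collapses to equalities once strong duality is assumed. First I would dispose of the cheap parts. Primal feasibility --- the inequalities $g_i(x^*)\le 0$ and the equations $h_j(x^*)=0$ --- holds simply because $x^*$ is assumed to be a primal optimal solution, hence in particular a feasible point of \eqref{eq::nonlinearOPT}. Dual feasibility $\lambda_i^*\ge 0$ holds because the dual problem maximizes $\phi(\lambda,\mu)$ over the region $\lambda\ge 0$, so any dual optimal $\lambda^*$ already lies there.

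The core of the argument is the following loop of inequalities. Using strong duality and the definition of the Lagrange dual function,
\[
f(x^*)\;=\;u^*\;=\;v^*\;=\;\phi(\lambda^*,\mu^*)\;=\;\inf_{x\in\R^n}L(x,\lambda^*,\mu^*)\;\le\;L(x^*,\lambda^*,\mu^*).
\]
On the other hand, expanding the Lagrangian at $x^*$ and using $h_j(x^*)=0$, $\lambda_i^*\ge 0$, and $g_i(x^*)\le 0$,
\[
L(x^*,\lambda^*,\mu^*)\;=\;f(x^*)+\sum_{i=1}^m\lambda_i^* g_i(x^*)+\sum_{j=1}^p\mu_j^* h_j(x^*)\;=\;f(x^*)+\sum_{i=1}^m\lambda_i^* g_i(x^*)\;\le\;f(x^*).
\]
Hence every inequality in this loop is in fact an equality, and two consequences fall out at once. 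Reading off $L(x^*,\lambda^*,\mu^*)=\inf_x L(x,\lambda^*,\mu^*)$, the point $x^*$ is an unconstrained global minimizer of the polynomial function $x\mapsto L(x,\lambda^*,\mu^*)$ on $\R^n$; first-order optimality then forces $\nabla_x L(x,\lambda^*,\mu^*)\big|_{x^*}=0$, which is precisely the stationarity equation in \eqref{eq::KKT}. Reading off $f(x^*)+\sum_i\lambda_i^* g_i(x^*)=f(x^*)$, we get $\sum_i\lambda_i^* g_i(x^*)=0$; since each summand satisfies $\lambda_i^* g_i(x^*)\le 0$, a sum of nonpositive terms can vanish only if each term does, which is complementary slackness.

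The argument is essentially bookkeeping, so there is no deep obstacle; the one point that deserves care is the step from ``$x^*$ attains the infimum defining $\phi(\lambda^*,\mu^*)$'' to ``$\nabla_x L$ vanishes at $x^*$''. This uses that $x^*$ is an interior point of the domain (here all of $\R^n$, so automatic) and that $L(\,\cdot\,,\lambda^*,\mu^*)$ is differentiable (true, being a polynomial in $x$). It is also worth emphasizing that strong duality is exactly what makes the infimum in $\phi(\lambda^*,\mu^*)$ actually \emph{attained} --- a priori it is only an infimum --- and that it is attained precisely at the primal optimizer $x^*$.
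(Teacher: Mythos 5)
Your proof is correct and complete: it is precisely the standard chain-of-inequalities argument from Boyd and Vandenberghe \cite[\S 5.5.2]{BV}, which is the reference the paper cites for this theorem rather than giving its own proof. The one point worth watching --- that the collapsed inequality makes $x^*$ an unconstrained minimizer of the differentiable function $L(\cdot,\lambda^*,\mu^*)$ on all of $\R^n$, hence a critical point --- is the same point you already flag, so nothing is missing.
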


For a derivation of this theorem see  \cite[\S 5.5.2]{BV}.
Several comments on the KKT conditions are in order. First, we note that complementary 
slackness amounts to a case distinction between active ($g_i=0$) and inactive inequalities ($g_i<0$). 
For any index $i$ with $g_i(x^*)\ne 0$ we need $\lambda_i = 0$, so the corresponding inequality does not play a role in the gradient condition. On the other hand, if $g_i(x^*) = 0$, 
then this can be treated as an equality constraint.

From an algebraic point of view, it is natural to relax the inequalities and to 
focus on the {\em KKT equations}.  These are the polynomial equations in (\ref{eq::KKT}):
\begin{equation}
\label{eq:KKTeqns}
h_1(x) \,=\, \cdots \,=\, h_p(x) \, = \,\, \lambda_1 g_1(x)
\, = \, \lambda_m g_m(x)
 \,\,= \,\, 0 .
\end{equation}
If we wish to solve our optimization problem exactly then
we must compute the algebraic variety 
in  $\,\R^n \times \R^m \times \R^p\,$ that is defined by these equations.

In what follows we explore Lagrange duality and the KKT conditions in two special cases, namely
in optimizing a linear function over an algebraic variety (Section~\ref{sec::opt_value}) and in semidefinite programming (Section~\ref{sec::sdp}).

\subsection{Projective varieties and their duality}
\label{subsec::proj_dualities}

In algebraic geometry, it is customary to work over an algebraically
closed field, such as the complex numbers $\C$.
 All our varieties will be defined over a subfield $K$ of the real numbers $\R$, 
and their points have coordinates in $\C$.
It is also customary to work in projective space $\PP^n$ rather
than affine space $\C^n$, i.e., we work with equivalence 
classes $x \sim \lambda x$ for all $\lambda \in \C \backslash \{0\}$,
 $x\in \C^{n+1} \backslash \{0\}$. Points 
$(x_0:x_1:\dots:x_n) $ in projective space $\PP^n$ are lines through the origin
in $\C^{n+1}$, and the usual affine coordinates are obtained by \emph{dehomogenization} 
e.g. with respect to $x_0$ (i.e. setting $x_0=1$). All points with $x_0=0$ are 
then considered as points at infinity. 
We refer to \cite[Chapter 8]{CLO} for an elementary introduction to projective algebraic geometry. 

Let $I = \langle h_1,\ldots,h_p \rangle$  be a homogeneous ideal in the polynomial ring $K[x_0,x_1,\ldots,x_n]$. We write $X = V(I)$ for its variety in the projective space $\PP^n$ over $\C$. The {\em singular locus} $\,{\rm Sing}(X)\,$ is a proper subvariety of $X$. It is defined inside $X$ by the vanishing of the $c \times c$-minors of the $m {\times} (n{+}1)$-Jacobian matrix
$J(X) = \bigl(\partial h_i / \partial x_j \bigr)$, where  $c = {\rm codim}(X)$.
See \cite[\S 9.6]{CLO} for background 
on singularities and dimension.
While the matrix $J(X)$ depends on our choice of ideal generators $h_i$,
the singular locus of $X$ is independent of that choice.
Points in ${\rm Sing}(X)$ are called {\em singular points} of $X$.
We write  $\,X_{\rm reg} = X \backslash {\rm Sing}(X)\,$ for the set of
{\em regular points} in $X$. We say that the projective variety $X$ 
is smooth if ${\rm Sing}(X) = \emptyset$, or equivalently, if $X = X_{\rm reg}$.

The dual projective space $(\PP^n)^*$ parametrizes
hyperplanes in $\PP^n$. A point $(u_0:\cdots :u_n) \in (\PP^n)^*$
represents the hyperplane
$\bigl\{ x \in \PP^n \,|\, \sum_{i=0}^n u_i x_i = 0 \bigr\}$.
We say that $u$ is {\em tangent} to $X$ at a regular point $ x \in X_{\rm reg}$ 
if $x$ lies in that hyperplane and its representing
vector $(u_1,\ldots,u_n)$ lies in the row space
of the Jacobian matrix $J(X)$ at the point~$x$. 

We define the {\em conormal variety} ${\rm CN}(X)$ of $X$ to be the closure
of the set
\[
\bigl\{ (x,u) \in \PP^n \times (\PP^n)^* \,\,|\,\, x \in X_{\rm reg} \,\,
\text{and $u$ is tangent to $X$ at $x$} \,\bigr\}.
\]
The projection of ${\rm CN}(X)$ onto the second factor is denoted $X^*$ and is
called the dual variety. More precisely,
the {\em dual variety} $X^*$ is the closure of the set
\[
\bigl\{ \,u \in (\PP^n)^*
\,\,|\,\,\,\text{the hyperplane $u$ is tangent to $X$ at some regular point} \, \bigr\}.
\]

\begin{prop} \label{prop:CNdim}
The conormal variety ${\rm CN}(X)$ has dimension $n-1$.
\end{prop}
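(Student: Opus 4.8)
The plan is to stratify ${\rm CN}(X)$ by the first projection $\pi\colon \PP^n\times(\PP^n)^*\to\PP^n$ and count dimensions fiberwise. Recall that ${\rm CN}(X)$ is by definition the Zariski closure of the incidence set
\[
{\rm CN}(X)^{\circ}\;=\;\bigl\{(x,u)\ :\ x\in X_{\rm reg},\ \ u\ \text{tangent to $X$ at $x$}\bigr\},
\]
and closure preserves dimension, so it suffices to show $\dim {\rm CN}(X)^{\circ}=n-1$. I would first reduce to the case that $X$ is irreducible and is a nonempty proper subvariety of $\PP^n$ (otherwise ${\rm CN}(X)=\emptyset$): since the tangency condition is local on $X_{\rm reg}$, a general $X$ with irreducible components $X_i$ satisfies ${\rm CN}(X)=\bigcup_i {\rm CN}(X_i)$, and the computation below will show that each ${\rm CN}(X_i)$ has dimension $n-1$ irrespective of $\dim X_i$.

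Next I would compute the fibers of $\pi|_{{\rm CN}(X)^{\circ}}$. Fix $x\in X_{\rm reg}$ and set $d=\dim X$, $c={\rm codim}(X)=n-d$. From the description of ${\rm Sing}(X)$ recalled above, the Jacobian $J(X)$ has rank exactly $c$ at every point of $X_{\rm reg}$, so its row space $R_x\subseteq K^{n+1}$ is $c$-dimensional. A point $u$ is tangent to $X$ at $x$ precisely when a representative of $u$ lies in $R_x$, and the additional incidence condition ``$x$ lies on the hyperplane $u$'' is then automatic: by Euler's identity $\sum_j x_j\,(\partial h_i/\partial x_j)=(\deg h_i)\,h_i(x)=0$ on $X$, so every vector in $R_x$ already annihilates $x$. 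Hence the fiber of $\pi|_{{\rm CN}(X)^{\circ}}$ over $x$ is the linear space $\PP(R_x)\cong\PP^{\,c-1}$.

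Finally I would assemble these fibers into a bundle. Around any point of $X_{\rm reg}$ one can select $c$ of the generators $h_i$ whose gradients are linearly independent there, hence on a Zariski-open neighborhood $U\subseteq X_{\rm reg}$; since $J(X)$ has rank exactly $c$ throughout $X_{\rm reg}$, these $c$ gradients span $R_x$ for all $x\in U$, which gives an isomorphism $\pi^{-1}(U)\cap{\rm CN}(X)^{\circ}\;\cong\;U\times\PP^{\,c-1}$. Thus ${\rm CN}(X)^{\circ}$ is a Zariski-locally trivial $\PP^{\,c-1}$-bundle over $X_{\rm reg}$; in particular it is irreducible (as $X$, hence $X_{\rm reg}$, is irreducible), and
\[
\dim {\rm CN}(X)^{\circ}\;=\;\dim X_{\rm reg}+(c-1)\;=\;d+(n-d)-1\;=\;n-1 .
\]
Passing to the Zariski closure, and in the general case taking the union over the irreducible components, yields $\dim{\rm CN}(X)=n-1$.

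The step I expect to be the main obstacle is the fiber computation in the middle paragraph: one must use that regular points are exactly where $J(X)$ attains its maximal rank $c$, so that $\PP(R_x)$ is honestly $(c-1)$-dimensional and not smaller, and one must invoke Euler's relation to see that no further linear condition on $u$ appears beyond membership in $R_x$. The remaining ingredients — local triviality of a projectivized bundle, the dimension formula for such a bundle, and invariance of dimension under Zariski closure and under decomposition into irreducible components — are standard.
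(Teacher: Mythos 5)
Your argument is correct and is essentially the paper's own: a fiber-dimension count for the projection $\mathrm{CN}(X)^\circ \to X_{\rm reg}$, giving $\dim X_{\rm reg} + (c-1) = n-1$. You simply flesh out the paper's terse ``degrees of freedom'' phrasing by making the $\PP^{c-1}$-bundle structure and the Euler-identity observation (that membership in the row space already forces $u$ to pass through $x$) explicit.
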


\begin{proof}
We may assume that $X$ is irreducible. Let $c = {\rm codim}(X)$. 
There are $n{-}c$ degrees of freedom in picking a point $x$ in $X_{\rm reg}$.
Once the regular point $x$ is fixed, the possible tangent vectors $u$ to $X$ at $x$
form a linear space of dimension $c{-}1$. Hence the dimension of ${\rm CN}(X)$ is
$(n{-}c) + (c{-}1) = n{-}1$.
\end{proof}

Since the dual variety $X^*$ is a linear projection of the conormal variety ${\rm CN}(X)$,
Proposition \ref{prop:CNdim} implies that
the dimension of $X^*$ is at most is $n-1$. We typically expect $X^*$ to have
dimension $n-1$, i.e.~regardless of the dimension of $X$,
the dual variety $X^*$ is typically a hypersurface in $(\PP^n)^*$.

\begin{example}[Example~\ref{ex:fourtwelve} cont.]
\label{ex:fourtwelve2}
Fix coordinates $(x {:} y {:} z)$ on $\PP^2$ and consider the
ideal $I = \langle x^4+y^4 - z^4 \rangle$. Then
$X = V(I)$ is the projectivization of the quartic curve
in Example \ref{ex:fourtwelve}. The dual curve $X^*$ is the
projectivization of the curve $\partial_a P^\Delta$  in \eqref{eq:curvedeg12}. Hence,
$X^*$ is a curve of degree $12$ in $(\PP^2)^*$. \qed
\end{example}

To compute the dual $X^*$ of a given variety $X$, we can utilize Gr\"obner bases
\cite{CLO, M2} as follows.
We augment the ideal $I$ with the bilinear polynomial
$\sum_{i=0}^n u_i x_i $ and all the $(c+1) \times (c+1)$-minors of the
matrix obtained from ${\rm Jac}(X)$ by adding the extra row $u$.
Let $J'$ denote the resulting ideal in $K[x_0,\ldots,x_n,u_0,\ldots,u_n]$.
In order to remove the singular locus of $X$
from the variety of $J'$, we replace $J'$ with the {\em saturation ideal}
\[
J \,\,:= \,\, \bigl( \,J' \,: \,  \langle \hbox{$\,c \times c$-minors of ${\rm Jac}(X)$}\, \rangle^\infty \bigr).
\]
See \cite[Exercise 8 in \S 4.4]{CLO} for the definition of saturation of ideals.

The ideal $J$ is bi-homogeneous in $x$ and $u$ respectively. Its
zero set in $\PP^n \times (\PP^n)^*$ is the
conormal variety ${\rm CN}(X) $. The ideal of the dual variety $X^*$
is finally obtained by eliminating the variables $x_0,\ldots,x_n$ from $J$:
\begin{equation}
\label{eq:idealofdual}
 \text{ideal of the dual variety $X^*$} \,\,\,  = \,\,\, J \,\cap \,K[u_0,u_1,\ldots,u_n]. 
 \end{equation}
As was remarked earlier, the expected dimension of $X^*$ is $n-1$, so 
the elimination ideal (\ref{eq:idealofdual}) is expected to be principal. We 
seek to compute its generator.
We shall see many examples of such dual hypersurfaces later on.

\begin{thm}{\rm (Biduality, \cite[Theorem 1.1]{GKZ})}
\label{thm:biduality} \\
Every irreducible projective variety $X \subset \PP^n$~satisfies
 \[  (X^*)^* \,\,= \,\,X .\]
\end{thm}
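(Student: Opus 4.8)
The plan is to deduce biduality from a single symmetry of the conormal variety. Write $\sigma \colon \PP^n \times (\PP^n)^* \to (\PP^n)^* \times \PP^n$ for the map that swaps the two factors, using the canonical identification $((\PP^n)^*)^* = \PP^n$. The crux of everything is the claim that
\[
\sigma\bigl({\rm CN}(X)\bigr) \,\,=\,\, {\rm CN}(X^*).
\]
Granting this, biduality is immediate: by definition $(X^*)^*$ is the image of ${\rm CN}(X^*)$ under the projection $\pi_2$ to its second factor (which is $\PP^n$), and by the claim this equals the image of ${\rm CN}(X)$ under the projection $\pi_1$ to its \emph{first} factor; since ${\rm CN}(X)$ is the closure of the conormal bundle over $X_{\rm reg}$ and $\pi_1$ is proper, that image is $\overline{X_{\rm reg}} = X$, where we use that $X$ is irreducible so $X_{\rm reg}$ is dense. (One may assume $X \subsetneq \PP^n$; the case $X = \PP^n$ is degenerate.)

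To prove the claim I would first establish the inclusion $\sigma({\rm CN}(X)) \subseteq {\rm CN}(X^*)$ pointwise at a general point, then upgrade it to equality by a dimension count. Fix a point $(x_0,u_0) \in {\rm CN}(X)$ with $x_0 \in X_{\rm reg}$, $u_0 \in X^*_{\rm reg}$, general enough that $\pi_2$ is smooth at $(x_0,u_0)$; such points are dense because $X^*_{\rm reg}$ is dense in $X^*$ and, in characteristic zero, generic smoothness applies to the proper surjection $\pi_2 \colon {\rm CN}(X) \to X^*$. Passing to the affine cones, parametrize ${\rm CN}(X)$ near $(x_0,u_0)$ as $(t,s) \mapsto (\phi(t), \sum_j s_j v_j(t))$, where $\phi$ is a local parametrization of $X$ and $v_1(t),\dots,v_c(t)$ (with $c = {\rm codim}(X)$) is a local frame for the conormal space, i.e.\ for the orthogonal complement of the span of $\phi(t),\partial_1\phi(t),\dots,\partial_p\phi(t)$ (these $p+1$ vectors being independent at a general point, since the cone over $X_{\rm reg}$ is smooth). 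Differentiating this parametrization shows that every tangent vector $(\dot x,\dot u)$ to ${\rm CN}(X)$ at $(x_0,u_0)$ satisfies $\langle \dot x, u_0\rangle = 0$: the $\partial/\partial s_j$ directions have $\dot x = 0$, and a $\partial/\partial t_k$ direction has $\dot x = \partial_k\phi(t_0)$ while $u_0$ lies in the conormal space at $\phi(t_0)$. Differentiating the incidence relation $\langle x,u\rangle = 0$ along ${\rm CN}(X)$ then forces $\langle x_0, \dot u\rangle = 0$ as well. Since $u_0$ is a regular point of $X^*$ and $d\pi_2$ maps $T_{(x_0,u_0)}{\rm CN}(X)$ onto $T_{u_0}X^*$, this says exactly that $x_0$ lies in the row space of the Jacobian of $X^*$ at $u_0$; together with $\langle x_0,u_0\rangle = 0$ we obtain $(u_0,x_0) \in {\rm CN}(X^*)$, and taking closures gives $\sigma({\rm CN}(X)) \subseteq {\rm CN}(X^*)$.

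To finish, note that $X$ irreducible makes ${\rm CN}(X)$ irreducible (it is the closure of a vector bundle over the irreducible $X_{\rm reg}$), hence $X^* = \pi_2({\rm CN}(X))$ and ${\rm CN}(X^*)$ are irreducible too, and by Proposition~\ref{prop:CNdim} both ${\rm CN}(X)$ and ${\rm CN}(X^*)$ have dimension $n-1$; as $\sigma$ is an isomorphism, the inclusion $\sigma({\rm CN}(X)) \subseteq {\rm CN}(X^*)$ of irreducible varieties of the same dimension must be an equality. Then $(X^*)^* = \pi_2({\rm CN}(X^*)) = \pi_2(\sigma({\rm CN}(X))) = \pi_1({\rm CN}(X)) = X$. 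I expect the main obstacle to be the tangent-space computation at the general point: one has to arrange the base point generically enough that ${\rm CN}(X)$ is smooth there and maps smoothly onto a regular point of $X^*$ — this is precisely where characteristic zero is essential, since biduality genuinely fails in positive characteristic — and one needs the local conormal-bundle parametrization available; once these genericity hypotheses are set up, the differentiation itself is short.
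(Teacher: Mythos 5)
Your proposal is correct and follows exactly the route the paper sketches: reduce biduality to the self-duality ${\rm CN}(X) = {\rm CN}(X^*)$ of the conormal variety (with the two factors swapped), which the paper simply cites to \cite[\S I.1.3]{GKZ}. You carry out that key step yourself, via the standard characteristic-zero Lagrangian argument (differentiating the incidence relation $\langle x,u\rangle = 0$ at a generic smooth point of $\pi_2$, then upgrading the resulting inclusion to equality by irreducibility plus the dimension count from Proposition~\ref{prop:CNdim}); this is precisely what the GKZ reference does, so the two arguments are essentially identical, with yours supplying the details the paper omits.
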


\begin{proof}[Proof idea]
The main step in proving this important theorem is that the 
conormal variety is self-dual, in the sense that
$\,{\rm CN}(X) = {\rm CN}(X^*)$. In this identity the roles of
$x \in \PP^n$ and $u \in (\PP^n)^*$ are swapped. It implies $ (X^*)^* = X$.
A proof for the self-duality of the conormal variety is found in  \cite[\S I.1.3]{GKZ}.
\end{proof}

\begin{example}
Suppose that  $X \subset \PP^n$ is a general smooth hypersurface of degree $d$.
 Then $X^*$ is a hypersurface of degree $d(d-1)^{n-1}$ in $(\PP^n)^*$. 
A concrete instance  for $ d=4$ and $n=2$ was seen in
Examples  \ref{ex:fourtwelve} and \ref{ex:fourtwelve2}. \qed
\end{example}

\begin{example}
\label{ex::conormal_SDP}
Let $X$ be the variety of symmetric $m\times m$ matrices of rank at most $r$. Then $X^*$ is the variety of symmetric $m\times m$ matrices of rank at most $m-r\,$ \cite[\S I.1.4]{GKZ}.
Here the conormal variety ${\rm CN}(X)$ consists of pairs of symmetric matrices $A$ and $B$
such that  $A\cdot B = 0$. This conormal variety will be important for our discussion of duality in semidefinite programming. \qed
\end{example}

An important class of examples, arising from toric geometry, is  featured in the
book by Gel'fand, Kapranov
and Zelevinsky \cite{GKZ}. A {\em projective toric variety} $X_A$ in 
$\PP^n$ is specified by an integer
matrix $A = (a_0,a_1,\ldots,a_n)$ of format $d \times (n{+}1)$ and rank $d$ whose
row space contains the vector $(1,1,\ldots,1)$. We define $X_A$ as the closure
in $\PP^n$ of the set $\,\bigl\{(t^{a_0}:t^{a_1}:\dots:t^{a_n}) \,|\, t\in (\C \backslash \{0\})^d \bigr\}$.

The dual variety $X_A^*$ is called the {\em $A$-discriminant}. It is usually a hypersurface,
in which case we identify the {\em $A$-discriminant} with the
irreducible polynomial $\Delta_A$ that vanishes on $X_A^*$.
The $A$-discriminant is indeed a discriminant in the sense that its vanishing
characterizes Laurent polynomials
\[
p(t)\,\,\, = \,\,\, \sum_{j=0}^n c_j \cdot t_1^{a_{1j}}t_2^{a_{2j}}\dots t_d^{a_{dj}}
\]
with the property that the hypersurface $\{p(t) = 0\}$ has a singular
point in $(\C \backslash \{0\})^d$. In other words, we can define (and compute) the $A$-discriminant~as
\[
\Delta_A  \,\,= \,\, \overline{\left\{ c \in (\PP^n)^* \,\, |\,\, \exists\, t \in (\C \backslash \{0\})^d \,\text{ with } 
\, p(t) = \frac{\partial p}{\partial t_1}=\dots= \frac{\partial p}{\partial t_d}=0\right\}}.
\]

\begin{example}
\label{ex:smalldiscriminant}
Let $d = 2 $, $n=4$, and fix the matrix
\[
A \quad = \quad \begin{pmatrix}
4 & 3 & 2 & 1 & 0\\
0 & 1 & 2 & 3 & 4 
\end{pmatrix}
\]
The associated toric variety is the rational normal curve 
\begin{align*} 
X_A & \,=\, \bigl\{ (t_1^4 : t_1^3 t_2 : t_1^2 t_2^2 : t_1 t_2^3 : t_2^4) \in \PP^4 
\,\,|\,\, (t_1:t_2) \in \PP^1 \bigr\} \\
 & \,=\, V(x_0 x_2 {-} x_1^2 , x_0 x_3 {-} x_1 x_2, x_0 x_4 {-} x_2^2,x_1 x_3 {-} x_2^2,
x_1 x_4 {-} x_2 x_3, x_2 x_4 {-} x_3^2).
\end{align*}
A hyperplane $\{ \sum_{j=0}^4 c_j x_j = 0 \} $ is tangent to $X_A$
if and only if the binary~form
\[ p(t_1,t_2) \quad = \quad  c_0 t_2^4 + c_1 t_1 t_2^3 + c_2 t_1^2 t_2^2 
+ c_3 t_1^3 t_2 + c_4 t_1^4
\]
has a linear factor of multiplicity $\geq 2$.
This is controlled by  the $A$-discriminant 
\begin{equation}
\label{discrofquartic} 
\Delta_A \,\, = \,\, \frac{1}{c_4} \cdot
{\rm det} \begin{pmatrix}
c_0 & c_1 & c_2 & c_3 & c_4 & 0 & 0   \\
0     & c_0 & c_1 & c_2 & c_3 & c_4 & 0   \\ 
0  & 0     & c_0 & c_1 & c_2 & c_3 & c_4   \\
c_1 & 2 c_2 & 3 c_3 & 4 c_4 & 0 & 0 & 0 \\
0 & c_1 & 2 c_2 & 3 c_3 & 4 c_4 & 0 & 0 \\
0 & 0 & c_1 & 2 c_2 & 3 c_3 & 4 c_4 & 0 \\
0 & 0 & 0 & c_1 & 2 c_2 & 3 c_3 & 4 c_4 
\end{pmatrix},
\end{equation}
given here in form of the determinant of a Sylvester matrix.
The sextic hypersurface $X_A^* = V(\Delta_A)$ is the
dual variety of the curve $X_A$.
  \qed
\end{example}

\section{The Optimal Value Function}
\label{sec::opt_value}

In this section we examine the optimization problem  \eqref{eq::nonlinearOPT}
under the hypotheses that the cost function $f(x)$ is linear and that there 
are no inequality constraints $g_i(x)$. The purposes of these restrictions is to
simplify the presentation and focus on the key ideas.
 Our analysis can be extended to the general problem
\eqref{eq::nonlinearOPT} and we discuss this briefly at the end of this section.

We consider the problem of
optimizing a linear cost function over a compact real
algebraic variety $X$ in $\R^n$:
\begin{equation}
\label{eq::varietyOPT}
\begin{aligned}
&c_0^* \,\,= \,\, \underset{x}{\text{minimize}} &&\langle c,x \rangle \\
& \text{subject to}
& & x \,\in\, X = \left\{v\in \R^n\,|\,h_1(v)=\dots=h_p(v) = 0\right\} .
\end{aligned}
\end{equation}
Here $h_1,h_2,\ldots,h_p$ are fixed polynomials in $n$ unknowns $x_1,\ldots,x_n$.
The expression
$\langle c, x \rangle  \,= \, c_1 x_1 + \cdots + c_n x_n$ is a linear form whose
coefficients $c_1,\ldots,c_n$ are unspecified parameters. Our aim is to compute
the {\em optimal value function} $c^*_0$. Thus, we regard the optimal value
$\,c_0^*\,$ as a function $\,\R^n \rightarrow \R\,$ of the parameters $c_1,\ldots,c_n$,
and we seek to determine this function.

The hypothesis that $X$ be compact has been included to ensure that the
optimal value function $c_0^*$ is well-defined on all of $\R^n$. Again, also
this hypothesis can be relaxed. We assume compactness here just for convenience.

Our problem is equivalent to that of describing the dual convex body
$P^\Delta$ of the convex hull $P = {\rm conv}(X)$, assuming that the latter
contains the origin in its interior. A small instance of this was seen in
(\ref{eq::simpleSDPlambda}). Since our convex hull $P$ is a semi-algebraic set,
Tarski's theorem on quantifier elimination in real algebraic geometry \cite{BPR, BCR} ensures
that the dual body $P^\Delta$ is also semialgebraic. This implies that the optimal value function
$c_0^*$ is an algebraic function, i.e., there exists a polynomial 
$\Phi(c_0,c_1,\ldots,c_n)$ in $n+1$ variables such that
\begin{equation}
\label{phiiszero}
 \Phi(c_0^*,c_1,\ldots,c_n) \quad = \quad 0 .
 \end{equation}
Our aim is to compute such a polynomial $\Phi$
of least possible degree. The input consists of
the polynomials $h_1,\ldots,h_p$ that cut out the variety $X$.
The degree of $\Phi$ in the unknown $c_0$ is called the
{\em algebraic degree} of the optimization problem \eqref{eq::nonlinearOPT}.
This number is an intrinsic algebraic complexity measure
for the problem of optimizing a linear function over $X$.
For instance, if $c_1,\ldots,c_n$ are rational numbers
then the algebraic degree indicates the degree of the
field extension $K$ over $\Q$ that contains
the coordinates of the optimal solution.

We illustrate our discussion by computing the optimal value function and 
its algebraic degree for the
 trigonometric space curve  
 featured in \cite[\S 1]{RS1}.

\begin{example}
Let $X$ be the curve in $\R^3$ with parametric representation
\[
(x_1,x_2,x_3) \quad = \quad \bigl( {\rm cos}(\theta), {\rm sin}(2\theta), {\rm cos}(3 \theta)\bigr). 
\]
In terms of equations, our curve can be written as $X = V(h_1,h_2)$, where
\[
h_1 \,\,= \,\,x_1^2-x_2^2-x_1 x_3 \quad \hbox{and} \quad h_2 \,\, = \,\,  x_3-4 x_1^3+3 x_1. 
\]
The optimal value function for maximizing $c_1 x_1 {+} c_2 x_2 {+} c_3 x_3$ over $X$
is given~by
\begin{small}
\[
\begin{matrix}
\!\!\!\! \!\!\! \!\!\! \Phi \quad =  \quad
(11664 c_3^4) \cdot c_0^6 \,\,+ \,\,
(864 c_1^3 c_3^3+1512 c_1^2 c_2^2 c_3^2
-19440 c_1^2 c_3^4 
\\ \qquad  \, +576 c_1 c_2^4 c_3
-1296 c_1 c_2^2  c_3^3+64 c_2^6-25272 c_2^2 c_3^4-34992 c_3^6)\cdot c_0^4
\\  + \,
(16 c_1^6 c_3^2+8 c_1^5 c_2^2 c_3-1152 c_1^5 c_3^3-1920 c_1^4 c_2^2 c_3^2+8208  c_1^4 c_3^4 -724 c_1^3 c_2^4 c_3 
 +144 c_1^3 c_2^2 c_3^3 \\  +c_1^4 c_2^4
-17280 c_1^3 c_3^5-80 c_1^2 c_2^6- 2802 c_1^2 c_2^4 c_3^2-3456 c_1^2 c_2^2 c_3^4+3888 c_1^2 c_3^6
-1120 c_1 c_2^6 c_3
\\ 
+540 c_1 c_2^4 c_3^3+55080 c_1 c_2^2 c_3^5-128 c_2^8-208 c_2^6 c_3^2{+}15417 c_2^4 c_3^4
{+}15552 c_2^2 c_3^6{+}34992 c_3^8) \cdot c_0^2
\\  + \,
(-16 c_1^8 c_3^2-8 c_1^7 c_2^2 c_3+256 c_1^7 c_3^3-c_1^6 c_2^4+328 c_1^6 c_2^2 c_3^2-1600  c_1^6 c_3^4+114 c_1^5 c_2^4 c_3
\\ 
-2856 c_1^5 c_2^2 c_3^3+4608 c_1^5 c_3^5+12 c_1^4 c_2^6-1959 c_1^4 c_2^4 c_3^2+9192 c_1^4 c_2^2 c_3^4-4320 c_1^4 c_3^6
\\ 
-528 c_1^3 c_2^6 c_3+7644 
c_1^3 c_2^4 c_3^3-7704 c_1^3 c_2^2 c_3^5-6912 c_1^3 c_3^7-48 c_1^2 c_2^8+3592 c_1^2 c_2^6  c_3^2 
\\ 
-4863 c_1^2 c_2^4 c_3^4-13608 c_1^2 c_2^2 c_3^6+15552 c_1^2 c_3^8+800 c_1 c_2^8 c_3-400 c_1 c_2^6 c_3^3-10350 c_1 c_2^4 c_3^5
 \\  +16200 c_1 c_2^2 c_3^7
+64 c_2^10+80 c_2^8 c_3^2-1460 c_2^6 c_3^4+135 c_2^4 c_3^6+9720 c_2^2 c_3^8-11664 c_3^{10}).
\end{matrix}
\]
\end{small}
The optimal value function $c_0^*$ is the algebraic
function of $c_1,c_2,c_3$ obtained by solving $\Phi = 0$ for 
the unknown $c_0$. Since $c_0$ has degree $6$ in $\Phi$, we see that
the algebraic degree of this optimization problem is $6$.
Note that we can write $c_0^*$ in terms of radicals in $c_1,c_2,c_3$ because
there are no odd powers of $c_0$ in $\Phi$, which ensures that the Galois group 
of $c_0^*$ over $\Q(c_1,c_2,c_3)$ is solvable. \qed
\end{example}

We now come to the main result in this section.
It will explain what the polynomial $\Phi$ means
and how it was computed in the previous example. For the sake of simplicity, we
shall first assume that the given variety $X$ is smooth, 
i.e. $X=X_{\rm reg}$, where the set $X_{\rm reg}$ denotes all 
regular points on $X$. 

\begin{thm} \label{thm:dualoptvalue}
Let $X^* \subset (\PP^n)^*$  be the dual variety to the projective closure of $X$. 
If $X$ is irreducible, smooth and compact in $\R^n$ then
$X^*$ is an irreducible hypersurface, and its defining polynomial
equals $\Phi(-c_0,c_1,\ldots,c_n)$ where $\Phi$ represents
the optimal value function as in \eqref{phiiszero} of the optimization problem
 \eqref{eq::varietyOPT}. In particular, the 
algebraic degree of \eqref{eq::varietyOPT} is the degree 
in $c_0$ of the irreducible polynomial that vanishes on the dual hypersurface $X^*$.
\end{thm}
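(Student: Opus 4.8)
The strategy is to connect the optimal value function of \eqref{eq::varietyOPT} directly to the geometry of tangent hyperplanes, and hence to the dual variety $X^*$. First I would rewrite the problem in the homogeneous/projective setting. A linear functional $\langle c, x\rangle$ attains a constrained extremum on the smooth variety $X$ at a point $x^*$ precisely when the Lagrange conditions hold: $c = \sum_j \mu_j \nabla h_j(x^*)$, i.e.\ the gradient of the cost lies in the row span of the Jacobian $J(X)$ at $x^*$. Translating into projective language, with $c_0^* = \langle c, x^*\rangle$ the optimal value, the affine hyperplane $\{x : \langle c, x\rangle = c_0^*\}$ becomes, after homogenizing with a variable $x_0$, a projective hyperplane with coordinates $(-c_0^* : c_1 : \cdots : c_n) \in (\PP^n)^*$, and the Lagrange condition is exactly the statement that this hyperplane is tangent to the projective closure $\overline{X}$ at the point $[1 : x^*]$. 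Therefore the point $(-c_0^* : c_1 : \cdots : c_n)$ lies on the dual variety $X^*$.

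Next I would argue that $X^*$ is a hypersurface. Since $X$ is irreducible, $\overline{X}$ is irreducible, so by Proposition~\ref{prop:CNdim} and the remark following it, $\dim X^* \le n-1$; to get equality it suffices to exhibit one hyperplane tangent to $\overline{X}$ whose coordinate $c_0$ genuinely varies — which follows because for generic $c$ the optimal value $c_0^*$ is a nonconstant function (a generic linear functional is not constant on the compact variety $X$ unless $X$ is a single point, which is excluded since $0 \in \mathrm{int}(\mathrm{conv}(X))$ forces $X$ to be positive-dimensional). Hence $X^*$ is an irreducible hypersurface with a unique (up to scalar) squarefree defining polynomial $\Psi(u_0, u_1, \ldots, u_n)$. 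The computation above shows $\Psi(-c_0^*, c_1, \ldots, c_n) = 0$ for all $c$ in a dense set, hence identically, so $\Psi(-c_0, c_1, \ldots, c_n)$ is a polynomial witnessing that $c_0^*$ is algebraic over $\Q(c_1,\ldots,c_n)$.

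It remains to show that $\Psi(-c_0, c_1, \ldots, c_n)$ is in fact the \emph{minimal} such polynomial $\Phi$, i.e.\ that it has least degree in $c_0$, equivalently that it is irreducible as a polynomial in all of $c_0, \ldots, c_n$ — which we already have, since $X^*$ is irreducible. The one subtlety is that $\Phi$ in \eqref{phiiszero} is defined only up to the requirement of least degree, so I would note that any polynomial vanishing on the graph of $c_0^*$ must vanish on the Zariski closure of that graph, and the closure of $\{(-\langle c, x^*\rangle, c) : c \in \R^n\}$ is precisely the hypersurface $X^*$ read in the sign-adjusted coordinates; an irreducible polynomial defining an irreducible hypersurface through which a given rational graph passes is automatically the minimal polynomial of that algebraic function. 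This gives $\Phi(-c_0, c_1, \ldots, c_n) = \Psi(c_0, c_1, \ldots, c_n)$ up to a scalar, and the claim about algebraic degree follows immediately by reading off the $c_0$-degree.

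The main obstacle I anticipate is the careful bookkeeping around \emph{which} stationary point realizes the global minimum and the role of compactness: the dual variety $X^*$ records \emph{all} tangent hyperplanes, not just those supporting the convex body $P = \mathrm{conv}(X)$, so a priori $\Phi$ might only be a factor of $\Psi$ or vice versa. The resolution is that the optimal value function, being algebraic, satisfies a unique minimal polynomial, and every branch of the correspondence $c \mapsto (\text{tangent hyperplane at a stationary point})$ lies on the single irreducible hypersurface $X^*$; since $X^*$ is irreducible there is no room for $\Phi$ and $\Psi$ to differ beyond a constant. Making this "no extraneous factors" argument precise — essentially, that the optimal-value branch is Zariski-dense in $X^*$ — is where the smoothness and irreducibility hypotheses do the real work, and is the step I would write out most carefully.
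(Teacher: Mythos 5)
Your proposal follows the same route as the paper: the Lagrange/KKT stationarity condition shows the optimal hyperplane $(-c_0^*:c_1:\cdots:c_n)$ is tangent to $\overline{X}$ and hence lies on $X^*$; a dimension count then forces $X^*$ to be a hypersurface; and irreducibility of $X^*$ identifies its defining polynomial with the minimal polynomial $\Phi$. You are more explicit than the paper on the final minimality step (the paper simply asserts it), and that added care is appropriate, since the ``no extraneous factors'' point you flag is exactly the content one must verify. One small imprecision: to see that $\dim X^* = n-1$, the relevant observation is not that ``the coordinate $c_0$ genuinely varies'' but rather that the map $c \mapsto (-c_0^*(c):c_1:\cdots:c_n)$ already has an $(n-1)$-dimensional image because of the last $n$ coordinates alone (it is injective up to positive scaling on rays); the paper phrases this as ``the boundary of $P^\Delta$ is a semialgebraic set of dimension $n-1$ contained in $X^*$,'' which is the cleaner version of the same fact.
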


Here the change of sign in the coordinate $c_0$ is needed because
the equation $\,c_0 = c_1x_1 +\cdots + c_n x_n\,$ for the objective
function value in $\R^n$ becomes the homogenized  
equation  $\,(-c_0) x_0 + c_1 x_1 + \cdots + c_n x_n = 0 \,$ 
when we pass to
  $\PP^n$.

\begin{proof}
Since $X$ is compact, for every cost vector $c$ there exists an optimal solution $x^*$.
Our assumption that $X$ is smooth ensures that $x^*$ is a regular point of $X$, and 
$c$ lies in the span of the gradient vectors $\nabla_x h_i\big|_{x^*}$ for $i=1,\ldots,p$.
In other words, the KTT conditions are necessary at the point $x^*$:
\begin{align}\label{eqn::KKT_equality}
c &\,\,=\, \sum_{i=1}^{p} \lambda_i^* \cdot \nabla_x  h_i\big|_{x^*},  \nonumber \\
h_i(x^*) &\,=\,\,\, 0 \qquad \hbox{for}\,\,\, \,i=1,2,\ldots,p .\nonumber
\end{align}
The scalars $\lambda_1^*, \ldots, \lambda_p^*$ express $c$ as a vector in the orthogonal
complement of the tangent space of $X$ at $x^*$.
In other words, the affine hyperplane
$\,\{x \in \R^n \,:\, \langle c, x \rangle = c_0^*\}$
contains the tangent space of $X$ at $x^*$.
This means that the pair $\,\bigl(\,x^*\,,\, (-c_0^*:c_1:\cdots :c_n) \,\bigr)\,$
lies in the conormal variety ${\rm CN}(X) \subset
\PP^n \times (\PP^n)^*$ of the projective closure of $X$.
By projection onto the second factor, we see that
$\,(-c_0^*:c_1:\cdots :c_n)\,$ lies in the dual variety~$X^*$.

Our argument shows that the boundary of 
the dual body $P^\Delta$ is a subset of $X^*$.
Since that boundary is a semialgebraic set of dimension $n-1$,
we conclude that $X^*$ is a hypersurface. If we write its
defining equation as $\Phi(-c_0,c_1,\ldots,c_n) = 0$, then the polynomial $\Phi$
satisfies \eqref{phiiszero}, and the
 statement about the algebraic degree follows as well.
\end{proof}

The KKT condition for the optimization problem
 \eqref{eq::varietyOPT} involves three sets of variables,
 two of which are dual variables,
to be carefully distinguished:
 \begin{enumerate}
\item Primal variables $x_1,\ldots,x_n$ to describe the set $X$ of feasible solutions.
\item (Lagrange) dual variables $\lambda_1,\ldots,\lambda_p$ to parametrize
the linear space of all hyperplanes that are tangent to $X$ at a fixed point $x^*$.
\item (Projective) dual variables $c_0,c_1,\ldots,c_n$ 
for the space of all hyperplanes. These
are coordinates for the dual variety $X^*$ and the dual body~$P^\Delta$.
\end{enumerate}
We can compute the equation $\Phi$ that defines the dual hypersurface $X^*$
by eliminating the first two groups of variables 
$x = (x_1,\ldots,x_n)$ and $\lambda=(\lambda_1,\dots,\lambda_p)$ from the 
following system of polynomial equations:
\[
c_0 \,= \,\langle c, x \rangle  \,\,\, \hbox{and} \,\,\, h_1(x) = \cdots = h_p(x) = 0 \,\,\, \hbox{and} \,\,\, c \,= \,\lambda_1 \nabla_x h_1 + \cdots + \lambda_p \nabla_x h_p. 
\]

\begin{example}{\rm (Example~\ref{ex:fourtwelve} cont.)} \label{ex:quartic1}
We consider 
\eqref{eq::varietyOPT} with $n=2$, $p = 1$ and $h_1 = x_1^4+x_2^4-1$.
The KKT equations for maximizing the function
\begin{equation}
\label{exeq1}
c_ 0 \,\,= \,\, c_1 x_1 + c_2 x_2  
\end{equation}
over the ``TV screen'' curve $X = V(h_1)$ are
\begin{equation}
\label{exeq2}
c_1 \,=\, \lambda_1 \cdot 4 x_1^3\, ,\quad 
c_2 \,=\, \lambda_1 \cdot 4 x_2^3\, ,\quad
x_1^4 + x_2^4 = 1.
\end{equation}
We eliminate the three unknowns $x_1,x_2,\lambda_1$
from the system of four polynomial equations in (\ref{exeq1}) and (\ref{exeq2}).
The result is the polynomial $\Phi(-c_0,c_1,c_2)$ of degree $12$
which expresses the optimal value $c_0^*$  as an algebraic
function of $c_1$ and $c_2$. We note that $\Phi(1,c_1,c_2)$
is precisely the polynomial in  (\ref{eq:curvedeg12}). \qed
\end{example}

It is natural to ask what happens with Theorem \ref{thm:dualoptvalue}
when $X$ fails to be smooth or compact,
or if there are additional inequality constraints.
Let us first consider the case when $X$ is no longer smooth, 
but still compact. Now, $X_{\rm reg}$ is a proper (open, dense) subset of $X$.
The optimal value function $c_0^*$ for the problem
\eqref{eq::varietyOPT} is still perfectly well-defined on all of
$\R^n$, and  it is still an algebraic function of $c_1,\ldots,c_n$.
However, the polynomial $\Phi$ that represents $c_0^*$
may now have more factors than just the equation of
the dual variety $X^*$.

\begin{example} \label{ex:quartic2}
Let $n=2$ and $p=1$ as in Example \ref{ex:quartic1}, but
now we consider a singular quartic.
The {\em bicuspid curve}, shown in Figure~\ref{fig::bicuspid}, 
has the equation
\[
h_1 \,\,= \,\, (x_1^2 - 1)(x_1 - 1)^2 + (x_2^2 - 1)^2.
\]
\begin{figure}
\centering
\includegraphics[width=0.5\textwidth]{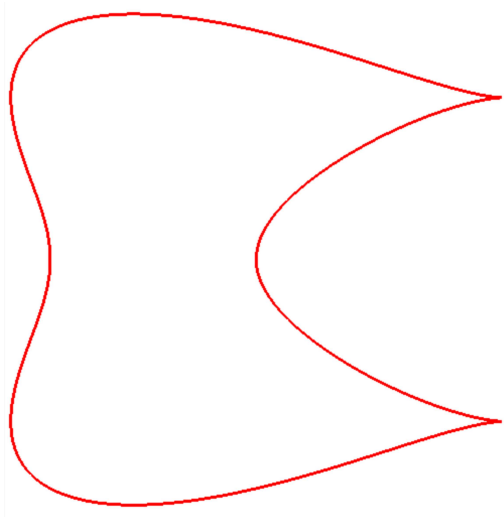}
\caption{The bicuspid curve in Example \ref{ex:quartic2}.\label{fig::bicuspid}}
\end{figure}
The algebraic degree of optimizing a linear function  
$c_1 x_1 + c_2 x_2$ over $X = V(h_1)$ equals $8$.
The optimal value function $c_0^* = c_0^*(c_1,c_2)$ is represented by
\[ \begin{matrix}
\Phi &=& 
\bigl(c_ 0-c_ 1+c_ 2 \bigr) \cdot
\bigl(c_ 0-c_ 1-c_ 2 \bigr) \cdot
\bigl(
16 c_ 0^6 - 48 (c_ 1^2+ c_ 2^2) c_ 0^4+
\\ & & (24 c_ 1^2 c_ 2^2+21 c_ 2^4+64 c_ 1^4) c_ 0^2
  +(54 c_ 1 c_ 2^4{+}32 c_ 1^5) c_ 0+8 c_ 1^4 c_ 2^2 {-} 3 c_ 1^2 c_ 2^4 {+} 11 c_ 2^6
\bigr).
\end{matrix}
\]
The first two linear factors correspond to the singular points of the bicuspid curve
$X$ , and the larger factor of degree six represents the
dual curve $X^*$. \qed
\end{example}

This example shows that, when $X$ has singularities,
it does not suffice to just dualize the variety $X$ but we must also
dualize the singular locus of $X$. This process is recursive, and we must also
consider the singular locus of the singular locus etc.  We believe that,
in order to characterize the value function $\Phi$, it always
suffices to dualize all irreducible varieties occurring in a  {\em Whitney stratification}
of $X$ but this has not been worked out yet. In our view, this topic 
requires more research, both on the theoretical side
and on the computational side. The following
result is valid for any variety~$X$ in $\R^n$.

\begin{cor}
If the dual variety of $X$ is a hypersurface
then its defining polynomial contributes a factor
to the value function of the problem (\ref{eq::varietyOPT}).
\end{cor}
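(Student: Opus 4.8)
The plan is to reduce the corollary to the smooth case handled in Theorem~\ref{thm:dualoptvalue} by passing to the regular locus $X_{\mathrm{reg}}$. First I would recall that, by definition, the dual variety $X^*$ is the Zariski closure of the set of hyperplanes tangent to $X$ at some regular point, so $X^* = X_{\mathrm{reg}}^*$; thus nothing about the dual variety is lost when we restrict attention to $X_{\mathrm{reg}}$. The geometric heart of the argument in the proof of Theorem~\ref{thm:dualoptvalue} is purely local: if $x^*$ is an optimal solution of \eqref{eq::varietyOPT} that happens to be a \emph{regular} point of $X$, then the KKT conditions hold at $x^*$, the affine hyperplane $\{\langle c,x\rangle = c_0^*\}$ contains the tangent space of $X$ at $x^*$, and hence the pair $\bigl(x^*,(-c_0^*:c_1:\cdots:c_n)\bigr)$ lies in the conormal variety $\mathrm{CN}(X)$, so $(-c_0^*:c_1:\cdots:c_n)\in X^*$. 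This local reasoning never used smoothness of $X$ as a whole --- only regularity at the particular optimal point.

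The next step is to show that for a full-dimensional (in fact, for an open dense, hence positive-measure) set of cost vectors $c\in\R^n$, the optimal solution $x^*$ of \eqref{eq::varietyOPT} is a regular point of $X$. Here I would argue as follows: the singular locus $\mathrm{Sing}(X)$ is a proper subvariety of $X$, so the set $N := \{\,c\in\R^n : \text{every optimizer of } \langle c,x\rangle \text{ over } X \text{ lies in } \mathrm{Sing}(X)\,\}$ is contained in the normal cone directions of $\mathrm{conv}(X)$ at faces spanned by points of $\mathrm{Sing}(X)$; since $\dim \mathrm{Sing}(X) < \dim X$, the contribution of $\mathrm{Sing}(X)$ to the boundary $\partial(\mathrm{conv}(X))$ has dimension strictly less than $n-1$, so $N$ is not Zariski dense --- it lies in a proper subvariety of the $c$-space. (If the dual variety of $X$ is assumed to be a hypersurface, one can alternatively note directly that the tangent hyperplanes coming from $X_{\mathrm{reg}}$ already sweep out an $(n-1)$-dimensional family, which forces them to contribute a full-dimensional piece of $\partial(\mathrm{conv}(X))$.) Therefore, for $c$ in a dense semialgebraic subset of $\R^n$, the point $(-c_0^*(c):c)$ lies on $X^*$, i.e.\ $\Phi(-c_0^*(c),c) = 0$ where $\Phi$ is defined via \eqref{phiiszero}. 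Let $g(u_0,u_1,\ldots,u_n)$ be the irreducible polynomial defining the hypersurface $X^*$; then $g(-c_0,c_1,\ldots,c_n)$ vanishes on the graph of $c_0^*$ over a dense subset of $\R^n$, hence on the whole Zariski closure of that graph, which is exactly the hypersurface $\Phi = 0$. Since $\Phi$ is (by construction) the minimal polynomial of $c_0^*$ and $g(-c_0,c)$ is irreducible and vanishes on $\{\Phi = 0\}$, it follows that $g(-c_0,c_1,\ldots,c_n)$ divides $\Phi(c_0,c_1,\ldots,c_n)$, which is the assertion.

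I would expect the main obstacle to be the dimension-counting step that guarantees optimizers are generically regular --- more precisely, making rigorous the claim that contributions of the lower-dimensional stratum $\mathrm{Sing}(X)$ to the boundary of $\mathrm{conv}(X)$ cannot fill up a full $(n-1)$-dimensional piece while simultaneously $X^*$ is a hypersurface. One clean way around this is to lean on the hypothesis: since we are \emph{given} that $X^*$ is a hypersurface, $\mathrm{CN}(X)$ projects dominantly onto an $(n-1)$-dimensional set of hyperplane-directions $u$, and for a generic such $u$ the corresponding tangency point $x$ is a \emph{regular} point of $X$ (by definition of $\mathrm{CN}(X)$) at which the supporting hyperplane $\{\langle u, x\rangle = 1\}$, after rescaling, actually supports $\mathrm{conv}(X)$; running this in reverse identifies a full-dimensional set of $c$ whose optimal value lands on $X^*$. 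Everything else is the local KKT computation already carried out in the proof of Theorem~\ref{thm:dualoptvalue}, together with the elementary fact that an irreducible polynomial vanishing on an irreducible hypersurface must divide any other polynomial cutting out that hypersurface.
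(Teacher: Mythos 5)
The paper states this corollary without proof, so there is no official argument to compare against; but your approach has a genuine gap at its key step, and under the reading of $\Phi$ as the \emph{minimal} polynomial of the real optimal value function $c_0^*$ the conclusion in fact fails in general.

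Your reduction to Theorem~\ref{thm:dualoptvalue} requires that a full-dimensional set of cost vectors $c$ admit a \emph{regular} optimizer $x^*\in X_{\mathrm{reg}}$. Your justification --- that since $X^*$ is a hypersurface, the tangent hyperplanes at regular points sweep out an $(n-1)$-dimensional family and hence must ``actually support $\conv(X)$'' --- conflates tangency with support. A hyperplane tangent to $X$ at a regular point need not be a supporting hyperplane of $\conv(X)$: it may cross $X$ at that point (an inflection) or meet $X$ elsewhere. The deltoid, the compact irreducible plane quartic with three real cusps, makes this concrete. Its convex hull is exactly the triangle spanned by the three cusps, so for \emph{every} cost vector $c$ the optimizer in \eqref{eq::varietyOPT} is a singular point of $X$, and the minimal polynomial $\Phi$ of $c_0^*$ is a product of three linear forms. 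Yet the dual curve $X^*$ is an irreducible cubic (by Pl\"ucker, $4\cdot 3 - 3\cdot 3 = 3$), so the hypothesis that $X^*$ be a hypersurface holds, but its defining equation does not divide $\Phi$. Thus no argument can locate a full-dimensional set of $c$ with regular optimizers here, and the divisibility you claim at the end breaks.

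If instead $\Phi$ denotes the polynomial obtained by eliminating $x$ and $\lambda$ from the KKT equations \eqref{eq:KKTeqns}, as in the paper's computational recipe, the corollary is essentially automatic: by construction the conormal variety $\mathrm{CN}(X)$ sits inside the KKT variety, its image under the $(x,\lambda)$-elimination is $X^*$, and consequently the irreducible equation of $X^*$ divides the generator of the principal elimination ideal. Your proof targets the minimal-polynomial reading instead, and under that reading it needs an extra hypothesis, e.g.\ that $X^*$ meets $\partial P^\Delta$ in a set of dimension $n-1$ (equivalently, that a full-dimensional family of supporting hyperplanes of $P=\conv(X)$ is tangent to $X$ at regular points). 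With that assumption added, the remainder of your argument --- passing the local KKT computation from Theorem~\ref{thm:dualoptvalue} through $X_{\mathrm{reg}}$, taking Zariski closure of the graph, and invoking irreducibility of $X^*$ to get divisibility --- is correct.
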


This result can be extended to an arbitrary optimization problem of the form \eqref{eq::nonlinearOPT}.
We obtain a similar characterization of the optimal value $c_0^*$ as a semi-algebraic function of
$c_1,c_2,\ldots,c_n$  by eliminating all primal variables $x_1,\ldots,x_n$ 
and all dual (optimization) variables $x, \lambda, \mu$ from the KKT equations.
Again, the optimal value function is represented  by a unique square-free polynomial
$\Phi(c_0,c_1,\ldots,c_n)$, and each factor of this polynomial is the dual hypersurface
$Y^*$ of some variety $Y$ that is obtained from $X$ by setting $g_i(x) = 0$
for some of the inequality constraints, by recursively passing to singular loci.
In Section~\ref{sec::sdp} we shall explore this for semidefinite programming.

We close this section with a simple example involving
$A$-discriminants.

\begin{example}
\label{ex:calculus}
Consider the
calculus exercise of minimizing a  polynomial
\[ q(t) \quad = \quad c_1 t+ c_2t^2+c_3t^3+c_4 t^4\]
of degree four over the real line $\R$.
Equivalently, we wish to minimize
\[
\,c_0 \,= \,c_1 x_1 + c_2 x_2 + c_3 x_3 + c_4 x_4 
\]
over the {\em rational normal curve} $X_A \cap \{x_0 = 1\} \,=\, 
V(x_1^2-x_2,x_1^3-x_3,x_1^4-x_4)$,
seen in Example \ref{ex:smalldiscriminant}. The optimal value
function $c_0^*$ is given by the equation
$\Delta_A(-c_0,c_1,c_2,c_3,c_4) = 0$,
where $\Delta_A$ is the discriminant in (\ref{discrofquartic}).
Hence the algebraic degree of this optimization problem is equal to 
three. \qed
\end{example}

\section{An Algebraic View of Convex Hulls}
\label{sec::convex_hull}
The problem of optimizing arbitrary linear functions over a given subset of $\R^n$, discussed in the
previous section, leads naturally to the geometric question of how to represent the
convex hull of that subset. In this section we explore this question
from an algebraic perspective. To be precise, we shall study the algebraic boundary $\partial_a P$ of the convex hull $\,P = \conv(X)\,$ of 
a compact real algebraic variety $X$ in $\R^n$.
 Biduality of projective varieties  (Theorem \ref{thm:biduality})
  will play an important
role in understanding the structure of $\partial_a P$.
The results to be presented are drawn from  \cite{RS1, RS2}.
In Section~\ref{sec::spec_shadow} we shall discuss 
the alternative representation of
$P$ as a spectrahedral shadow.

We begin with the seemingly easy example of a plane quartic curve.

\begin{example}
\label{ex:trottcurve}
We consider the following smooth compact plane curve
\begin{equation}
\label{eq:trottcurve}
  X \,\, = \,\, \bigl\{
(x,y) \in \R^2 \,\,|\,\, 144 x^4+144 y^4-225 (x^2+y^2) + 350 x^2 y^2+81\,= \,0 \,\bigr\}. 
\end{equation}

\begin{figure}
\vskip -0.4cm \qquad \qquad \qquad
\includegraphics[width=0.6\textwidth]{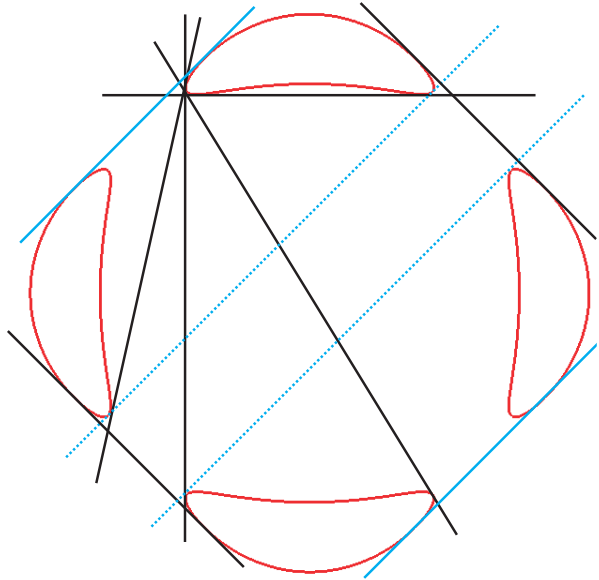}
\vskip -0.4cm
\caption{A quartic curve in the plane can have up to $28$ real bitangents.\label{fig:quartic}}
\vskip -0.1cm
\end{figure}

This curve is known as the {\em Trott curve}. It was first constructed
by Michael Trott in \cite{Tro}, and is illustrated above in Figure \ref{fig:quartic}.
A classical result of algebraic geometry states that a general quartic curve
in the complex projective plane $\PP^2$ has $28$ bitangent lines, and the
Trott curve $X$ is an instance where all $28$ lines are real and have a 
coordinatization in terms of radicals over $\Q$.
Four of the $28$ bitangents form edges of $ {\rm conv}(X)$.
These special bitangents~are
\[
\{(x,y) \in \R^2\,|\, \pm \,x \,\pm\, y  \,=\, \gamma\},
\quad \hbox{where} \,\,\,\gamma \,=\, \frac{ \sqrt{48050+434 \sqrt{9889}} }{248} \,= \,1.2177... 
\]
The boundary of ${\rm conv}(X)$ alternates between
these four edges and pieces of the curve $X$.
The eight transition points have the floating point coordinates
\[       (\,\pm \,0.37655..., \pm \,0.84122...) \,\,,\,\,\,
         (\,\pm \,0.84122..., \pm \,0.37655...). \]
These coordinates lie in the field $\Q(\gamma)$
and we invite the reader to write them in the form
$q_1 + q_2 \gamma$ where $q_i \in \Q$.
The $\Q$-Zariski closure of the $4$ edge lines of ${\rm conv}(X)$ is
a curve $Y$ of degree $8$. Its equation
has two irreducible factors:
\[
\begin{matrix}
 (992 x^4{-}3968 x^3 y{+}5952 x^2 y^2{-}3968 x y^3{+}992 y^4{-}1550 x^2{+}3100 x y {-}1550 y^2{+}117) \\
 (992 x^4{+}3968 x^3 y{+}5952 x^2 y^2{+}3968 x y^3{+}992 y^4{-}1550 x^2{-}3100 x y{-}1550 y^2{+}117)
\end{matrix}
\]
Each reduces over $\R$ to four parallel lines, two of which contribute to the 
boundary.
The point of this example is to stress the role of the (arithmetic of) bitangents
in any exact description of the convex hull of a plane curve. \qed
\end{example}

We now present a general formula for the algebraic boundary of the convex hull
of a compact variety $X$ in $\R^n$. The key observation is that the algebraic 
boundary of $P = {\rm conv}(X)$ will consist of different types of components, 
resulting from planes that are 
simultaneously tangent at $k$ different points of $X$, for various values of 
the integer $k$. For the Trott curve $X$ in Example \ref{ex:trottcurve}, the 
relevant integers were $k=1$ and $k=2$, and we demonstrated that the algebraic
boundary of its convex hull $P$ is a reducible curve of degree $12$:
\begin{equation}
\label{eq:trottboundary}
 \partial_a (P) \,\,\,= \,\,\, X \,\cup\, Y .
 \end{equation}
In the following definitions we regard $X$ as
a complex projective variety in~$\PP^n$. 

Let $X^{[k]}$ be the variety in the dual projective space $(\PP^n)^*$
which is the closure of the set of all hyperplanes that are tangent to $X$ 
at $k$ regular points which span a $(k-1)$-plane in $\PP^n$. This definition 
makes sense for $k=1,2,\ldots,n$. Note that  $X^{[1]}$ coincides 
with the dual variety $X^*$, and $X^{[2]}$ parametrizes all hyperplanes
that are tangent to $X$ at two distinct points. Typically,
$X^{[2]}$ is an irreducible component of 
the singular locus of $X^* = X^{[1]}$.
We have the following nested chain of projective varieties 
in the dual space:
\[ X^{[n]} \,\subseteq \, X^{[n-1]}\, \subseteq \,
\cdots \, \subseteq \, X^{[2]} \,\subseteq \,X^{[1]}
\,\,\subseteq \,\, (\PP^n)^*. \]
We now dualize each of the varieties in this chain. The resulting
varieties $(X^{[k]})^*$ live in the primal projective space $\PP^n$.
For $k=1$ we return to our original variety, i.e.,~we have $\, (X^{[1]})^* \, = \, X\,$
by biduality (Theorem \ref{thm:biduality}). In the following result
we assume that $X$ is smooth as a complex variety in $\PP^n$,
and we require one technical hypothesis concerning tangency of hyperplanes.

\begin{thm}{\rm \cite[Theorem 1.1]{RS2}}
\label{boundarytheorem}
Let $X$ be a smooth and compact real algebraic variety that affinely spans $\R^n$,
and such that only finitely many hyperplanes are tangent to $X$ at infinitely
many points.
The algebraic boundary $\partial_a P$ of its convex hull, P = conv(X), can be
 computed by biduality as follows:
\begin{equation}
\label{eq:boundaryformula}
\partial_aP \,\,\subseteq \,\, \bigcup_{k =1}^n (X^{[k]})^*.
\end{equation}
\end{thm}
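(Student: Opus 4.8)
The plan is to show that every point in the boundary $\partial P$ arises, up to Zariski closure, from a hyperplane that supports $P$ along a face which is spanned by tangency points on $X$; counting those tangency points gives the index $k$. Concretely, I would start with a generic smooth point $z$ of the boundary hypersurface $\partial_a P$. Since $P = \conv(X)$ and $X$ is compact, $z$ lies in the relative interior of some exposed face $F = \conv(P) \cap H$ for a supporting hyperplane $H = \{\langle c, x\rangle = c_0\}$ of $P$. By Carathéodory's theorem applied to the compact set $F \cap X$, the point $z$ is a convex combination of finitely many points of $X$ lying on $H$, and for $z$ generic on a given component of $\partial_a P$ the number of such points that are \emph{needed} (i.e.\ the dimension of the affine span of the tangency locus plus one) is a well-defined integer $k$ with $1 \le k \le n$, because $F$ has dimension at most $n-1$.

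Next I would argue that $H$ is in fact tangent to $X$ at each of those $k$ points. This uses smoothness of $X$: a supporting hyperplane of $\conv(X)$ that passes through a point $p \in X$ must contain the (affine) tangent space $T_pX$, for otherwise one could move off $H$ in both directions along $X$ near $p$ and exit the halfspace. Hence $H$, viewed as a point of $(\PP^n)^*$, is tangent to $X$ at $p_1, \dots, p_k$, and by construction these span a $(k-1)$-plane in $\PP^n$ (we chose them from a minimal Carathéodory representation of a point in the relative interior of $F$). Therefore $H \in X^{[k]}$. Running over all smooth points $z$ of a fixed irreducible component $Z$ of $\partial_a P$, the associated supporting hyperplanes sweep out a subset of $X^{[k(Z)]}$ whose Zariski closure has dimension $n-1$, i.e.\ it is a component of $X^{[k(Z)]}$ viewed in $(\PP^n)^*$. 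Dualizing and invoking biduality (Theorem~\ref{thm:biduality}), the point $z$ — which lies on the primal hyperplane $H$ at a tangency point, hence on the conormal variety of $X^{[k(Z)]}$ projected to $\PP^n$ — lies on $(X^{[k(Z)]})^*$. Since $z$ was a generic point of $Z$, we get $Z \subseteq (X^{[k(Z)]})^*$, and taking the union over all components of $\partial_a P$ yields \eqref{eq:boundaryformula}.

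The technical hypothesis — only finitely many hyperplanes tangent to $X$ at infinitely many points — enters precisely to guarantee that the tangency locus of a generic supporting hyperplane is finite, so that the Carathéodory count $k$ is the honest number of tangency points rather than the dimension of a positive-dimensional tangency variety plus one; equivalently it ensures $X^{[k]}$ is defined via $k$ \emph{isolated} tangent points and that these strata genuinely stratify the relevant part of $X^*$ and its singular locus. I would also need to handle the real-versus-complex bookkeeping: $X^{[k]}$ is a complex projective variety, the boundary $\partial P$ is a real semialgebraic set, and the containment \eqref{eq:boundaryformula} is an inclusion of the real Zariski closure of $\partial P$ into the real points of $\bigcup_k (X^{[k]})^*$; compactness of $X$ is what forces supporting hyperplanes to exist in the first place and forces $P$ to be full-dimensional once $X$ affinely spans $\R^n$.

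The main obstacle I anticipate is making the genericity argument fully rigorous: one must show that on each $(n-1)$-dimensional component $Z$ of $\partial_a P$ the Carathéodory number $k$ is constant on a dense subset, and that the map $z \mapsto H(z)$ sending a boundary point to its supporting hyperplane is well-defined and algebraic on that dense subset with $(n-1)$-dimensional image in $X^{[k]}$. This requires ruling out, for generic $z \in Z$, that $z$ lies on more than one supporting hyperplane (which holds off a lower-dimensional set since $P$ is convex and $Z$ is a hypersurface), and controlling the dimension of fibers of the projection $\mathrm{CN}(X^{[k]}) \to \PP^n$ so that biduality can be applied on the nose. Everything else — the tangent-space containment, Carathéodory, and the dual-variety formalism — is comparatively routine given the results already assembled in Section~\ref{subsec::proj_dualities}.
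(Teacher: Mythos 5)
The paper itself offers no proof here; it quotes Theorem~1.1 of \cite{RS2}. Your outline follows the same skeleton as that argument --- pick a generic boundary point $z$ on an irreducible component $Z$ of $\partial_a P$, find the supporting hyperplane $H$, use smoothness of $X$ to show $H$ is tangent to $X$ at each Carath\'eodory point, and conclude $H \in X^{[k]}$ --- and all of those steps are fine. The real content, however, lies in passing from $H\in X^{[k]}$ to $z\in (X^{[k]})^*$, and here your argument has a genuine gap.

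You assert that, as $z$ varies over $Z$, the supporting hyperplanes $H$ sweep out a subset of $X^{[k]}$ ``whose Zariski closure has dimension $n-1$, i.e.\ it is a component of $X^{[k]}$.'' That Zariski closure is exactly the dual variety $Z^*$, and its dimension is not $n-1$ in general: the Gauss map of $Z$ is constant on the relative interior of the face $F = P\cap H$, so its generic fiber has dimension at least $\dim F = k-1$ and hence $\dim Z^* \le n-1-\dim F = n-k$. For $k\geq 2$ this is strictly less than $n-1$. For instance, when $Z$ is the hyperplane through a triangular face as in Example~\ref{ex::trigonometric}, $Z^*$ is a single point, and for the bitangent component $Y$ of the Trott curve in Example~\ref{ex:trottcurve}, $Y^*$ is a finite subset of the $0$-dimensional variety $X^{[2]}$.

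This matters because projective duality does not reverse arbitrary inclusions. If $A\subsetneq B$ with $A$ a proper subvariety of an irreducible component of $B$, then in general neither $A^*\subset B^*$ nor $A^*\supset B^*$ holds, so knowing only that $Z^*\subset X^{[k]}$ does not let you conclude $Z=(Z^*)^*\subseteq (X^{[k]})^*$. What is actually needed is the stronger statement that $Z^*$ is a \emph{union of irreducible components} of $X^{[k]}$. Once you have that, ${\rm CN}(Z^*)\subset{\rm CN}(X^{[k]})$, and then the self-duality of conormal varieties underlying Theorem~\ref{thm:biduality} gives $Z=(Z^*)^*\subseteq (X^{[k]})^*$. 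Establishing that $Z^*$ is a component of $X^{[k]}$ --- equivalently, showing directly that the pair $(H,z)$ lies in ${\rm CN}(X^{[k]})$ by identifying the tangent space of $X^{[k]}$ at a generic such $H$ with the annihilator of the span of the tangency points $p_1,\dots,p_k$ --- is precisely where the finiteness hypothesis on multitangent hyperplanes gets used and is the technical heart of \cite{RS2}. You flag a version of this difficulty at the end of your write-up, but the intermediate $(n-1)$-dimension claim you lean on is false, and without it the concluding biduality step is unsupported.
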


Since $\partial_a P$ is pure of codimension one, 
in the union we only need indices $k$ having
property that $ (X^{[k]})^* $ is a hypersurface in $\PP^n$.
As argued in \cite{RS2}, this leads to the following
lower bound on the relevant values to be considered:
\begin{equation}
\label{eq:lowerbound}
 k \,\,\geq \,\, \big\lceil \frac{n}{{\rm dim}(X)+1} \big\rceil . 
 \end{equation}
The formula  (\ref{eq:boundaryformula}) computes
the algebraic boundary $\partial_a P$ in the following sense.
For each relevant $k$ we check whether $(X^{[k]})^*$ is
a hypersurface, and, if yes, we determine its irreducible components
(over the field $K$ of interest).
For each component we then check, usually by means of
numerical computations, whether it meets the boundary $\partial P$
in a regular point. The irreducible hypersurfaces which survive this
test are precisely the components of $\partial_a X$.

\begin{example} 
\label{ex:convhullcurve}
When $X$ is a plane curve in $\R^2$ then (\ref{eq:boundaryformula}) says that
\begin{equation}
\label{eq:apXplane}
 \partial_a P \,\, \subseteq \,\, X \cup (X^{[2]})^* .
 \end{equation}
Here $X^{[2]}$ is the set of points in $(\PP^2)^*$ that are dual
to the bitangent lines of $X$, and $(X^{[2]})^*$ is the union 
of those lines in $\PP^2$. If we work over $K = \Q$ 
and the curve $X$ is general enough then we expect
equality to hold in  (\ref{eq:apXplane}). For special curves
the inclusion can be strict. This happens for the Trott curve (\ref{eq:trottcurve})
since $Y$ is a proper subset of $(X^{[2]})^*$. Namely, 
$Y$ consists of two of the six $\Q$-components of $(X^{[2]})^*$.
However, a small perturbation of the coefficients in
(\ref{eq:trottcurve}) leads to a curve $X$ with
equality in (\ref{eq:apXplane}), as the relevant
Galois group acts transitively on the $28$ points in $X^{[2]}$ for general
quartics~$X$. Now, the algebraic boundary over $\Q$ is
a reducible curve of degree $32  = 28+4$. \qed
\end{example}

If we are given the variety $X $ in terms of equations or  in parametric form, then we
can compute equations for $X^{[k]}$ by an elimination process similar b of
the dual variety $X^*$.  However, expressing the tangency condition at $k$ different points requires a larger number of additional variables (which need to be eliminated afterwards) and thus the computations are quite involved. The subsequent step of dualizing $X^{[k]}$ to get
the right hand side of (\ref{eq:boundaryformula}) is even more forbidding. The resulting
hypersurfaces  $(X^{[k]})^*$ tend to have  high degree and their
defining polynomials are very large when $n \geq 3$.

The article \cite{RS1} offers a detailed study of the case when
$X$ is a space curve in $\R^3$. Here the lower bound (\ref{eq:lowerbound})
tells us that $\,\partial_a X \, \subseteq \, (X^{[2]})^* \cup (X^{[3]})^*$.
The surface $(X^{[2]})^*$ is the {\em edge surface} of the curve $X$,
and $(X^{[3]})^*$ is the union of all {\em tritangent planes} of $X$.
The following example illustrates these objects.

\begin{figure}[htb]
\centering\includegraphics[width=0.6\textwidth]{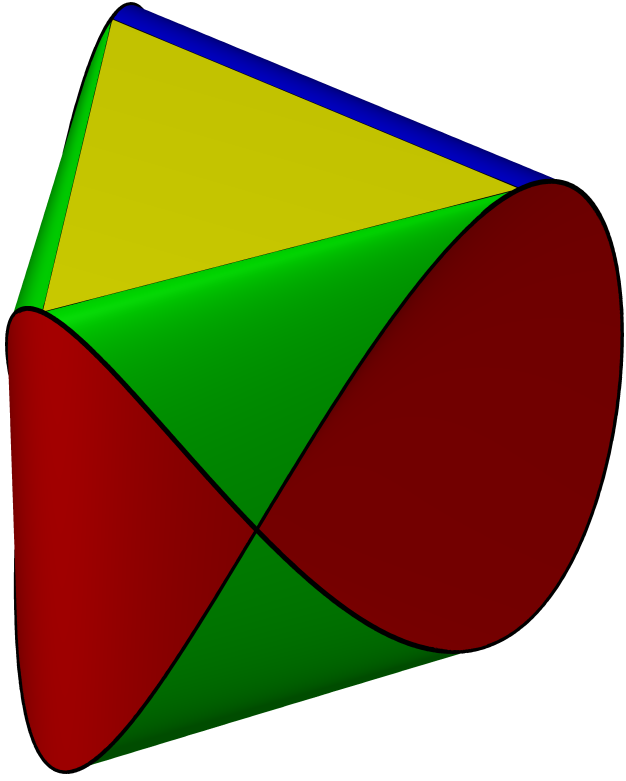}
\caption{The convex hull of the curve $\,( {\rm cos}(\theta) ,  {\rm cos}(2 \theta),  {\rm sin}(3 \theta)) \, $ in $\,\R^3$.\label{fig::trigonometric}}
\end{figure}

\begin{example}
\label{ex::trigonometric}
We consider the trigonometric curve $X$ in $\R^3$ which has the
parametrization $x = {\rm cos}(\theta) $,  $y= {\rm cos}(2 \theta)$,  $z = {\rm sin}(3 \theta)$.
This is an algebraic curve of degree six. Its implicit representation
equals $X=V(h_1,h_2)$ where
\[ h_1 \,=\, 2 x^2-y-1 \,\,\, \hbox{and} \,\,\,
h_2 \,= \, 4 y^3+2 z^2-3y-1. \]
The edge surface $(X^{[2]})^*$ has three irreducible components. Two of the
components are the quadric $V(h_1)$ and the cubic $V(h_2)$. The
third and most interesting component of $(X^{[2]})^*$ is the surface of degree $16$
with equation~$\,h_3 = $
{\allowdisplaybreaks \begin{footnotesize}
\begin{align*}
&-419904x^{14}y^2+664848x^{12}y^4-419904x^{10}y^6+132192x^8y^8-20736x^6y^{10}
+1296x^4y^{12}  \\ &
-46656x^{14}z^2+373248x^{12}y^2z^2-69984x^{10}y^4z^2 {-} 22464x^8y^6z^2
{+} 4320x^6y^8z^2 {+} 31104x^{12}z^4
\\ & +5184x^{10}y^2z^4+4752x^8y^4z^4+1728x^{10}z^6
+699840x^{14}y-46656x^{12}y^3-902016x^{10}y^5 \\ &
{+} 694656x^8y^7 {-} 209088x^6y^9
{-} 1150848x^{10}y^3z^2{+}279936x^8y^5z^2{+}17280x^6y^7z^2 {-} 4032x^4y^9z^2
\\ & -98496x^{10}yz^4
+27072x^4y^{11}-1152x^2y^{13}-419904x^{12}yz^2-25920x^8y^3z^4-4608x^6y^5z^4\\
&-1728x^8yz^6-291600x^{14}-169128x^{12}y^2-256608x^{10}y^4+956880x^8y^6
-618192x^6y^8
\\ & +148824x^4y^{10}-13120x^2y^{12}+256y^{14}+392688x^{12}z^2 
+671976x^{10}y^2z^2
+1454976x^8y^4z^2 \\ & -292608x^6y^6z^2-4272x^4y^8z^2+1016x^2y^{10}z^2
{-}116208x^{10}z^4{+}135432x^8y^2z^4 {+}18144x^6y^4z^4 \\ & +1264x^4y^6z^4-5616x^8z^6
+504x^6y^2z^6-1108080x^{12}y+925344x^{10}y^3+215136x^8y^5
\\ & -672192x^6y^7
+331920x^4y^9-54240x^2y^{11}+2304y^{13}{+}273456x^{10}yz^2{+}282528x^8y^3z^2\\
&-1185408x^6y^5z^2+149376x^4y^7z^2-368x^2y^9z^2-32y^{11}z^2 {+} 273456x^8yz^4
{-} 67104x^6y^3z^4 \\ &
-4704x^4y^5z^4-64x^2y^7z^4+4752x^6yz^6-32x^4y^3z^6+747225x^{12}
+636660x^{10}y^2 \\ & 
-908010x^8y^4-65340x^6y^6+291465x^4y^8-101712x^2y^{10}+8256y^{12}-818100x^{10}z^2
\\ & -1405836x^8y^2z^2-905634x^6y^4z^2+583824x^4y^6z^2
-39318x^2y^8z^2+368y^{10}z^2 {+}193806x^8z^4
\\ & -282996x^6y^2z^4+15450x^4y^4z^4
+716x^2y^6z^4+y^8z^4+6876x^6z^6-1140x^4y^2z^6+2x^2y^4z^6
\\ & + x^4z^8+507384x^{10}y
-809568x^8y^3+569592x^6y^5-27216x^4y^7-71648x^2y^9+13952y^{11} \\ & +555768x^8yz^2
+869040x^6y^3z^2+688512x^4y^5z^2-154128x^2y^7z^2 {+} 4416y^9z^2 {-} 343224x^6yz^4\\
&+127360x^4y^3z^4-1656x^2y^5z^4-64y^7z^4-4536x^4yz^6{+}48x^2y^3z^6{-}775170x^{10}
{-}191808x^8y^2 \\ & +599022x^6y^4
 -245700x^4y^6+31608x^2y^8+7872y^{10}+765072x^8z^2
+589788x^6y^2z^2 \\ & -66066x^4y^4z^2-234252x^2y^6z^2+16632y^8z^2-173196x^6z^4
+248928x^4y^2z^4-26158x^2y^4z^4 \\ & -32y^6z^4-3904x^4z^6+804x^2y^2z^6+2y^4z^6
-2x^2z^8+5832x^8y+98280x^6y^3-219456x^4y^5 \\ & +72072x^2y^7   -8064y^9
-724032x^6yz^2
-515760x^4y^3z^2-99672x^2y^5z^2+29976y^7z^2
\\ & +225048x^4yz^4
-76216x^2y^3z^4+1912y^5z^4
+1696x^2yz^6-32y^3z^6+411345x^8-66096x^6y^2 \\ & {-}62532x^4y^4 {+} 29388x^2y^6
 {-} 11856y^8 {-} 365346x^6z^2 {+}19812x^4y^2z^2 {+} 104922x^2y^4z^2 {+} 24636y^6z^2
\\ & +85090x^4z^4
{-}104580x^2y^2z^4
{+} 8282y^4z^4{+}1014x^2z^6{-}144y^2z^6+z^8{-}39744x^6y {+} 61992x^4y^3
\\ & +2304x^2y^5+576y^7
+305328x^4yz^2+86640x^2y^3z^2+960y^5z^2-73480x^2yz^4+16024y^3z^4 \\ & -200yz^6
-114966x^6+24120x^4y^2-5958x^2y^4+6192y^6+85494x^4z^2-39696x^2y^2z^2\\
&-11970y^4z^2-21610x^2z^4+16780y^2z^4-94z^6-3672x^4y-11024x^2y^3+272y^5\\
&-46904x^2yz^2-4632y^3z^2+9368yz^4+15246x^4-84x^2y^2-1908y^4-6892x^2z^2\\
&+2204y^2z^2+2215z^4+3216x^2y+168y^3+904yz^2-664x^2+292y^2-282z^2-96y+9.
\end{align*}
\end{footnotesize}}
The boundary of $P = {\rm conv}(X)$ contains patches from all three surfaces
$V(h_1)$, $V(h_2)$ and $V(h_3)$. There are also two triangles, with
 vertices  at $(\sqrt{3}/2,1/2,\pm 1)$,
$(\sqrt{3}/2,1/2,\pm1)$ and $(0,-1,\pm 1)$. They span two
of the  tritangent planes of $X$, namely, $z=1$ and $z=-1$.
The union of all tritangent planes equals $(X^{[3]})^*$.
Only one triangle is visible in Figure~\ref{fig::trigonometric}. It is colored in yellow.
The curved blue patch adjacent to one of the edges of the triangle is given by the cubic $h_3$,
while the other two edges of the triangle lie in the degree $16$ surface $V(h_3)$.
The curve $X$ has two singular points at $(x,y,z) = (\pm 1/2, -1/2, 0)$. Around these
two singular points, the boundary is given by four alternating patches
from the quadric $V(h_1)$ highlighted in red and the 
degree $16$ surface $V(h_3)$ in green. We conclude that the
edge surface  $\,(X^{[2]})^* = V(h_1 h_2 h_3)\,$ is reducible
of degree $21 = 2 + 3 + 16$, and the algebraic boundary 
$\partial_a(P) $ is a reducible surface of degree $\,23= 2 + 21 $. \qed
\end{example}

In our next example we examine the convex hull of
space curves of degree four that are obtained as the
intersection of two quadratic surfaces in $\R^3$.

\begin{example}
\label{ex:ellipticcurve}
Let $X = V(h_1,h_2)$ be the intersection of two quadratic surfaces in $3$-space.
We assume that $X$ has no singularities in $\PP^3$. Then $X$ is a curve of genus one.
According to recent work of Scheiderer \cite{Sch}, the convex body 
$P = {\rm conv}(X)$ can be represented exactly using
Lasserre relaxations, a topic we shall return to when
discussing spectrahedral shadows in Section~\ref{sec::spec_shadow}. 
If we are willing to work over $\R$ then $P$ is in fact
a spectrahedron, as shown in \cite[Example 2.3]{RS1}.
We here derive that representation for a concrete example.

Lazard {\it et al.} \cite[\S 8.2]{LPP} examine the curve $X$ cut out by the two quadrics
\[ h_1 = x^2 + y^2 + z^2 - 1
\quad {\rm and} \quad
h_2 = 19 x^2 + 22 y^2 + 21 z^2 - 20. 
\]
Figure~\ref{fig::lazard_variety} shows the two components of
$X$ on the unit sphere $V(h_1)$.

\begin{figure} [hb]
\centering
\includegraphics[width=0.5\textwidth]{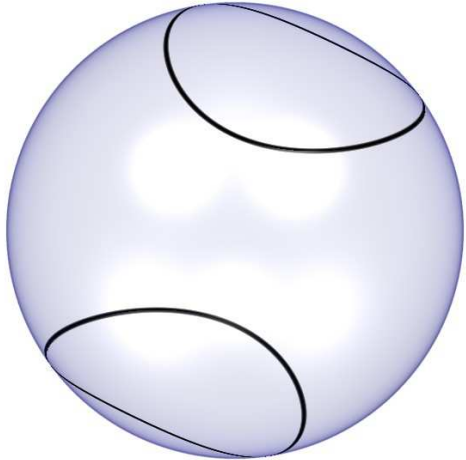}
\vskip -0.6cm
\caption{The curve on the unit sphere discussed in Example~\ref{ex:ellipticcurve} and
\ref{ex:ellipticcurve2}.
\label{fig::lazard_variety}
}
\end{figure}

The dual variety  $X^*$ is a surface of degree $8$ in $(\PP^3)^*$.
The singular locus of $X^*$ contains the curve $X^{[2]}$ which 
is the union of four quadratic curves. The duals of these four plane curves are the singular
quadratic surfaces defined~by
\[
h_3 \, = \, x^2-2 y^2-z^2 , \,\,
h_4 \, = \, 2 x^2-y^2-1 ,\,\,
h_5 \, = \, 3 y^2+2 z^2-1 ,\,\,
h_6 \, = \, 3 x^2+z^2-2.
\]
The edge surface of $X$ is the union of these four quadrics:
\[
(X^{[2]})^* \,\, = \,\, V(h_3) \,\cup \, V(h_4) \,\cup \,V(h_5) \,\cup \, V(h_6) . 
\]
The algebraic boundary of $P$ consists of the last two among these quadrics:
\[
\partial_a P \,\, = \,\, V(h_5) \,\cup \, V(h_6).
\]
These two quadrics are convex.
From this we derive a representation of $P$ as a spectrahedron by applying Schur
complements to the quadrics $h_5$ and $h_6$:
\[
P \quad = \quad \biggl\{
(x,y,z) \in \R^3 \,\,\bigg|\,\,
\begin{pmatrix}  1 {+} \sqrt{3} y & \sqrt{2} z & 0 & 0 \\
                                   \sqrt{2} z & 1{-} \sqrt{3} y & 0 & 0 \\
                                    0 & 0 & \sqrt{2} {+} x & \sqrt{3} x \\
                                     0 & 0 & \sqrt{3} x & \sqrt{2} {-} x 
\end{pmatrix} \,\succeq \, 0 \,\biggr\}.
\]
\end{example}

\section{Spectrahedra and Semidefinite Programming}
\label{sec::sdp}

Spectrahedra and semidefinite programming (SDP) have already surfaced
a few times in our discussion. In this section we take a systematic look 
at these topics and their dualities.
We write $\SSS^n$ for the space of real symmetric $n {\times} n$-matices
and $\SSS_+^n$ for the cone of positive semidefinite matrices in $\SSS^n
\simeq \R^{\binom{n+1}{2}}$.
This cone is self-dual with respect to the inner product
$\langle U,V \rangle = {\rm trace}(U \cdot V)$.

A {\em spectrahedron} is the intersection of the  cone $\SSS_+^n$ 
with an affine subspace
\[
\mathcal{K} \,\,= \,\,C \,+\,\underbrace{\Span(A_1,A_2,\ldots,A_m)}_{\mathcal{W}}. 
\]
Here $\mathcal{W}$  is a linear subspace of dimension $m$
in $\SSS^n$, and the spectrahedron is 
\begin{equation}
\label{spectraP}
 P \,\,\, = \, \,\,\,  \bigl\{\,x\in \R^m \,\,|\,\,  C - \sum_{i=1}^m x_i A_i \succeq 0 \bigr\} 
\,\,\, \simeq \,\,\, \mathcal{K} \cap \SSS_+^n.
\end{equation}
We shall assume that $C$ is positive definite or, equivalently, that $0\in {\rm int} (P)$.
The dual body to our spectrahedron is
written in the coordinates on $\R^m$~as 
\[
 P^\Delta \,\,= \,\, \left\{ \,y\in \R^m  \;|\; \forall \, x\in P \text{ with } \langle y,x\rangle \leq 1\right\}.
\]
We can express $P^\Delta$ as a projection of the
$\binom{n+1}{2}$-dimensional spectrahedron
\begin{equation}
\label{upstairsspec}
Q \,\,= \,\, \{\,U \in \SSS_+^n \,\, | \, \, \langle U, C \rangle \leq 1 \,\}.
\end{equation}
Namely, consider   the linear map dual to the inclusion 
of the linear subspace  $\mathcal{W} = \Span(A_1,A_2,\ldots,A_m)$ 
in the $\binom{n+1}{2}$-dimensional real vector space~$\SSS^n$:
\begin{align*}
\pi_\mathcal{W}: \SSS^n & \, \rightarrow \,\,\, \SSS^n/\mathcal{W}^\perp 
\,\,\simeq \,\,\R^m \\
      U &\,\mapsto \bigl( \langle U, A_1 \rangle, \langle U, A_2 \rangle, \ldots,
      \langle U, A_m \rangle \bigr) .
      \end{align*}   

\begin{remark}
\label{rmk:specshadow}
The convex body $P^\Delta$ dual to the spectrahedron $P$
is affinely isomorphic to the closure of the image of the spectrahedron
(\ref{upstairsspec}) under the linear map $\pi_\mathcal{W}$, i.e. $P^\Delta \simeq \overline{\pi_\mathcal{W}(Q)}$. This result is  due to
Ramana and Goldman \cite{RG}.
In summary, while
the dual to a spectrahedron is generally not a spectrahedron,
it is always a  \emph{spectrahedral shadow}.
See also Theorem \ref{thm:shadowsareclosed}.
\end{remark}

\begin{figure}
\includegraphics[width=0.38\textwidth]{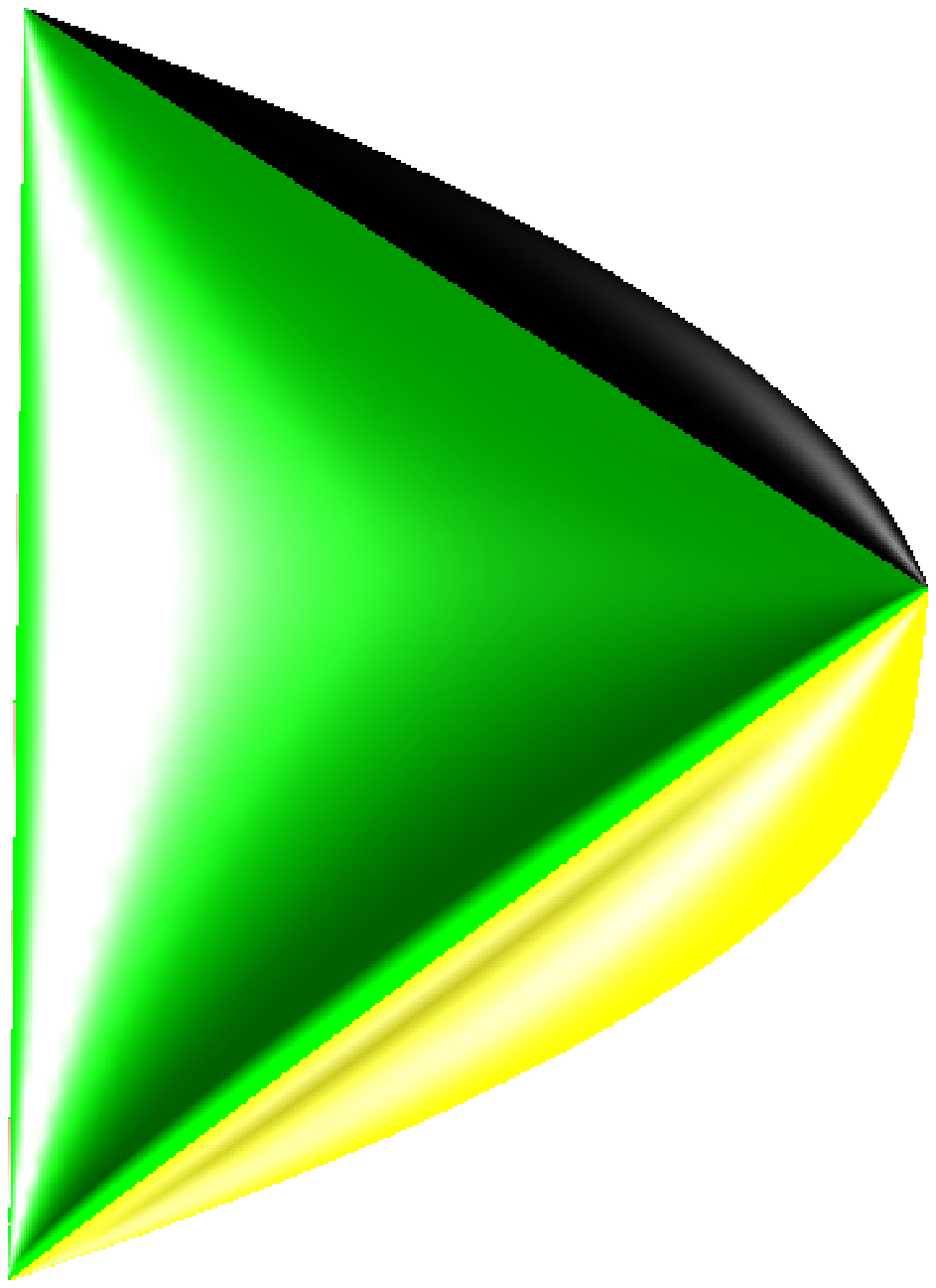}\hfill
\includegraphics[width=0.44\textwidth]{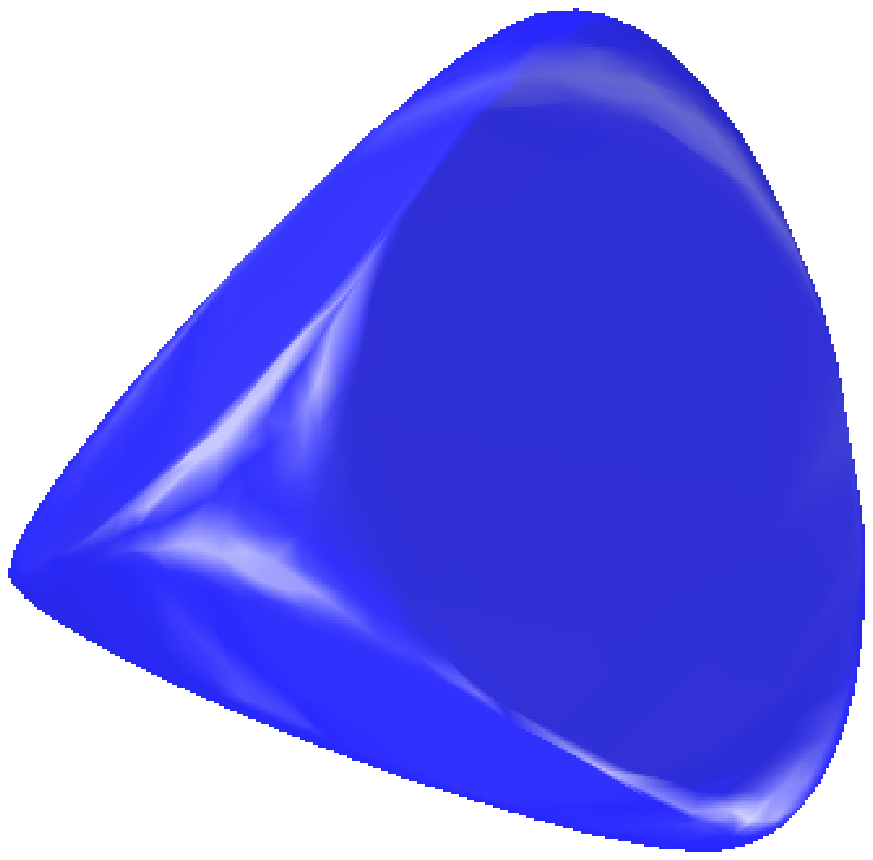}
\caption{The elliptope $P=\mathcal{E}_3$ and its dual convex body $P^\Delta$.\label{fig::samosa}}
\end{figure}

\begin{example} 
\label{ex:elliptope1}
The {\em elliptope} $\mathcal{E}_n$ is the spectrahedron consisting of
all {\em correlation matrices} of size $n$, see \cite{LP}. These are the
positive semidefinite symmetric $n {\times} n$-matrices whose
diagonal entries are~$1$. We consider the
case $n=3$:
\begin{equation}
\label{E3}
\mathcal{E}_3 \,\,= \,\, \left\{(x,y,z)\in \R^3 \, \,\bigg|\,\, 
\begin{pmatrix}1&x&y\\x&1&z\\y&z&1\end{pmatrix} \succeq 0\right\}.
\end{equation}
This spectrahedron of dimension $m=3$
is shown on the left in Figure~\ref{fig::samosa}. 
The algebraic boundary of $\mathcal{E}_3$ is the cubic surface $X$
defined by the vanishing of the $3 \times 3$-determinant in (\ref{E3}).
That surface has four isolated singular points
\[
X_{\rm sing} = \{(1,1,1),(1,-1,-1),(-1,1,-1),(-1,-1,1)\}.
\]
The six edges of the tetrahedron ${\rm conv}(X_{\rm sing})$
are edges of the elliptope $\mathcal{E}_3$.
The dual body, 
shown on the right of Figure~\ref{fig::samosa}, is the spectrahedral shadow
\begin{equation}
\label{E3dual}
\mathcal{E}_3^\Delta \,\,= \,\,
\left\{ (a,b,c) \in \R^3 \,\,\bigg|\,\, 
\exists \,u,v \in \R \,:\,
\begin{pmatrix}\,u&a&b\\ \,a&v&c\\ \, b&c&2{-}u{-}v\end{pmatrix} \succeq 0\right\}.
\end{equation}
The algebraic boundary of $\mathcal{E}_3^\Delta$ can be computed
by the following method. We
form the ideal generated by the determinant in (\ref{E3dual})
and its derivatives with respect to $u$ and $v$, and we eliminate 
$u,v$.
This results in the polynomial
\[ 
(a^2b^2+b^2c^2+a^2c^2-2abc)(a+b+c+1)(a-b-c+1)(a-b+c-1)(a+b-c-1). 
\]
The first factor is the equation of the {\em Steiner quartic} surface
$X^*$ which is dual to {\em Cayley cubic} surface $X  = \partial_a \mathcal{E}_3$.
The four linear factors represent the arrangement 
$(X_{\rm sing})^*$ of the four planes dual to the four singular points.

Thus the algebraic boundary of the dual body $\mathcal{E}_3^\Delta$ is
the reducible surface
\begin{equation}
\label{dualtoE3}
\partial_a \mathcal{E}_3^\Delta  \,\,= \,\, X^* \,\cup \, (X_{\rm sing})^* \quad \subset \,\,\,(\PP^3)^*.
\end{equation}
We note that  $\mathcal{E}_3^\Delta$ is not a spectrahedron
as it fails to be a {\em basic semi-algebraic set}, i.e.~no
polynomial $\phi$ satisfies
$\,\mathcal{E}_3^\Delta = \{ (a,b,c) \in \R^3 \,: \,\phi(a,b,c) \geq 0 \} $.
This is seen from  the fact
that the Steiner surface intersects the interior of $\mathcal{E}_3^\Delta$. 
\qed
\end{example}

Semidefinite programming (SDP) is the branch of convex optimization
that is concerned with maximizing a linear function $b$ over a spectrahedron:

\begin{equation}
\label{eqn::primalConic}
  p^* \,\, := \,\, \underset{x}{\text{Maximize}}  \,\,
  \langle b, x \rangle\\
\,\,\,  \text{subject to}  \,\,\,  x \in P.
  \end{equation}
Here $P$ is as in (\ref{spectraP}).
As the semidefiniteness of a matrix 
is equivalent to the 
simultaneous non-negativity of its principal minors, 
SDP is an instance of 
the polynomial optimization problem
 \eqref{eq::nonlinearOPT}. Lagrange duality theory applies
here by
   \cite[\S5]{BV}. 
   We shall derive  the optimization problem dual to (\ref{eqn::primalConic})~from
\begin{equation}
\label{eqn::dualConic}
  d^* \,\, := \,\, \underset{\lambda}{\text{Minimize}}
  \,\, \lambda
\,\,\, \text{subject to}  \,\,\,
   \frac{1}{\lambda} b \,\in \, P^\Delta\,.
\end{equation}
Since we assumed $0 \in {\rm int} (P)$, strong duality holds and we have $p^*=d^*$.

The fact that $P^\Delta$ is a spectrahedral shadow implies that
the dual optimization problem is again a semidefinite optimization problem.
In light of Remark \ref{rmk:specshadow}, the condition 
$\frac{1}{\lambda} b \in P^\Delta$ can be expressed as follows:
\[
\exists \ U \,: \,U \succeq 0\,,\,\, \langle C, U \rangle \leq 1 \, \,
\hbox{and} \,\,\,
b_i = \lambda \langle A_i,  U \rangle
\,\,\, \hbox{for}\,\,\, i = 1,2,\ldots,m . 
\]
Since the optimal value of \eqref{eqn::dualConic}
is attained at the boundary of $P^\Delta$,  we can here replace
the condition  $\langle C,U \rangle \leq 1$ 
with $\langle C,U \rangle = 1$. This is in fact what was 
done to obtain (\ref{E3dual}).
If we now set   $Y = \lambda U$, then  \eqref{eqn::dualConic} translates into 
\begin{equation}
\label{eqn::dualConicSDP}
\begin{aligned}
&  d^* \,:= \,\underset{Y \in \SSS^n_+}{\text{Minimize}}& &\langle C,Y\rangle\\
& \text{subject to} & & \langle A_i, Y \rangle \,=\, b_i 
\,\,\, \hbox{for} \,\,\, i=1, \ldots, m\\
& & & \hbox{and} \,\,\, Y \succeq 0.
\end{aligned}
\end{equation}
We recall that $\mathcal{W} = \Span(A_1,A_2,\dots,A_m)$  and
we fix any matrix $B \in \SSS^n$ with 
$\langle A_i , B \rangle = b_i$ for $i=1,\ldots,m$.
Then (\ref{eqn::dualConicSDP}) can be written as follows:
\begin{equation}
\label{eqn::dualConicLinSpace}
  d^* \,:=\,\underset{Y \in \SSS^n_+}{\text{Minimize}} \,\langle C, Y\rangle \,\,\,
  \text{subject to} \,\,\,
   Y \in (B + \mathcal{W}^\perp ) \cap \SSS^n_+ 
\end{equation}
The following reformulation of (\ref{eqn::primalConic}) 
highlights the symmetry between the primal and dual formulations
of our semidefinite programming problem:
\begin{equation}
\label{eqn::ConicLinSpace}
  p^* \,:=\,\underset{X \in \SSS^n_+}{\text{Maximize}} \,\langle B, C- X \rangle
\,\,  \text{subject to} \,\,   X \in (C + \mathcal{W} ) \cap \SSS^n_+ 
\end{equation}
Then the following variant of the Karush-Kuhn-Tucker Theorem holds:

\begin{thm} {\rm \cite[\S 5.9.2]{BV}}
If both the primal problem (\ref{eqn::ConicLinSpace})
and its dual (\ref{eqn::dualConicLinSpace})
are strictly feasible, then the KKT conditions take the following~form:
\begin{align*}
X &\in ( C + \mathcal{W} ) \cap \SSS^n_+\\
Y &\in (B +  \mathcal{W}^\perp) \cap \SSS^n_+ \\
X \cdot Y &= 0 \quad \text{(complementary slackness)}.
\end{align*}
These conditions characterize all the pairs $(X,Y)$ of optimal solutions.
\end{thm}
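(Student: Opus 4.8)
The plan is to prove the theorem in three moves: establish weak duality between (\ref{eqn::ConicLinSpace}) and (\ref{eqn::dualConicLinSpace}) by a one-line orthogonality computation, upgrade this to strong duality using the strict-feasibility hypothesis, and then convert the vanishing of the resulting duality gap into the complementary slackness equation $X\cdot Y = 0$ by exploiting positive semidefiniteness.

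First I would record the basic identity. If $X$ is feasible for (\ref{eqn::ConicLinSpace}) and $Y$ is feasible for (\ref{eqn::dualConicLinSpace}), then $C - X \in \mathcal{W}$ while $Y - B \in \mathcal{W}^\perp$, so $\langle C - X,\, Y - B \rangle = 0$. Expanding this bilinear form and rearranging gives
\[
\langle C, Y \rangle \,-\, \langle B,\, C - X \rangle \,\,=\,\, \langle X, Y \rangle .
\]
Since $X \succeq 0$ and $Y \succeq 0$, we have $\langle X, Y \rangle = {\rm trace}(X^{1/2} Y X^{1/2}) \geq 0$, so the dual objective always dominates the primal objective; in particular $p^* \leq d^*$, and the duality gap of any feasible pair is exactly $\langle X, Y \rangle$. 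The key step is then to invoke strong duality: the assumption that both (\ref{eqn::ConicLinSpace}) and (\ref{eqn::dualConicLinSpace}) are strictly feasible is precisely Slater's condition for the primal and for the dual, and by the strong duality theorem for semidefinite programming \cite[\S 5.9.2]{BV} this forces $p^* = d^*$ with the common optimal value attained on both sides. Consequently a feasible pair $(X,Y)$ is a pair of optimal solutions if and only if its gap vanishes, i.e. if and only if $\langle X, Y \rangle = 0$.

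It remains to argue $\langle X, Y \rangle = 0 \iff X\cdot Y = 0$ for $X, Y \succeq 0$. Set $M = X^{1/2} Y X^{1/2} = (Y^{1/2} X^{1/2})^{t}(Y^{1/2} X^{1/2}) \succeq 0$; then ${\rm trace}(M) = \langle X, Y \rangle = 0$ forces $M = 0$, hence $Y^{1/2} X^{1/2} = 0$, hence $X^{1/2} Y^{1/2} = 0$ by transposition, and therefore $X\cdot Y = X^{1/2}(X^{1/2} Y^{1/2})Y^{1/2} = 0$; the converse is immediate. (Note $X\cdot Y = 0$ and $Y \cdot X = 0$ are equivalent since both matrices are symmetric.) Combining this equivalence with the two feasibility constraints yields exactly the three displayed KKT conditions, and the chain ``optimal $\iff$ gap zero $\iff X\cdot Y = 0$'' shows they characterize the optimal pairs, completing the proof. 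The one genuinely delicate point is the appeal to strong duality: without strict feasibility one obtains only $p^* \leq d^*$ and the gap may fail to be attained, so the characterization of optimal pairs would break down — this is why Slater's condition is indispensable, and it is the step I would state carefully rather than reprove.
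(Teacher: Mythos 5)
The paper does not prove this theorem; it is stated as a known result and cited to \cite[\S 5.9.2]{BV}, with only a brief remark afterwards explaining how to view the entries of $Y$ as Lagrange multipliers for the semidefinite constraint. Your blind proof is correct and is the standard conic-duality argument: the orthogonality identity $\langle C - X,\, Y - B\rangle = 0$ shows the duality gap of any feasible pair $(X,Y)$ equals $\langle X, Y\rangle \geq 0$; strict feasibility on both sides is Slater's condition, which yields strong duality with attainment; and the trace argument forces $X^{1/2} Y X^{1/2} = 0$, hence $XY = 0$, from $\langle X, Y\rangle = 0$. The one structural difference worth noting is that the reference the paper cites arrives at these conditions by specializing the general Lagrangian KKT system \eqref{eq::KKT} to the conic setting, whereas you argue directly from weak and strong conic duality without passing through the Lagrangian --- a slightly more elementary route that avoids parametrizing the multipliers for the cone constraint. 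Both approaches are standard, and your version is self-contained modulo the strong-duality theorem, which you correctly identify as the one irreducible ingredient.
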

This theorem can be related to the general optimality conditions~\eqref{eq::KKT} 
by regarding the entries of $Y\in \SSS^n$ as the (Lagrangian) dual variables to the positive semidefinite constraint $X = C - \sum_{i=1}^m x_i A_i
\succeq 0$. 
The three conditions are both necessary and sufficient since semidefinite programming is a convex problem and every local optimum is also a global optimal solution.

In order to study algebraic and geometric properties of SDP, we will relax the conic
inequalities $\,X,Y  \in \mathcal{S}_+^n \,$ and focus only on the
{\em KKT equations}
\begin{equation}
\label{eq:KKTsdp}
 X \in  C + \mathcal{W} \, , \,\,
Y \in B +  \mathcal{W}^\perp \,\,\, \hbox{and} \,\,\,
X \cdot Y = 0 .   
\end{equation}
Given $B,C$ and $\mathcal{W}$, our problem is to solve 
the polynomial equations (\ref{eq:KKTsdp}). The theorem ensures
that, among its solutions $(X,Y)$, there is precisely one
pair of positive semidefinite matrices. That pair is the one desired in SDP.

\begin{example}
\label{ex:elliptope2}
Consider the problem of
minimizing  a linear function $Y \mapsto \langle C,Y\rangle$ over the set
 of all correlation matrices $Y$, that is, over the elliptope $\mathcal{E}_n$
 of Example \ref{ex:elliptope1}.
 Here $\,m=n$, $B$ is the identity matrix, 
 $\mathcal{W}$ is the space of all diagonal matrices, 
 and  $\mathcal{W}^\perp$ consists of matrices with zero diagonal.
 The  dual problem is to maximize the trace of $C-X$ over all matrices $X \in \SSS^n_+$ such that $C-X$ is diagonal.
 Equivalently, we seek to find the minimum trace $t^*$
  of any positive semidefinite matrix that agrees with $C$
 in its off-diagonal~entries.
 
 For $n=4$, the KKT equations (\ref{eq:KKTsdp}) can be written in the form
 \begin{equation}
 \label{E4KKT}
 X \cdot Y \,\,\, = \,\,\,
 \begin{pmatrix}
  x_1 &   c_{12} & c_{13} & c_{14} \\
 c_{12} &  x_2 & c_{23}  & c_{24} \\
 c_{13} & c_{23} & x_3 & c_{34} \\
 c_{14} & c_{24} & c_{34} & x_4 
 \end{pmatrix} \cdot
 \begin{pmatrix}
 1 & y_{12} & y_{13} & y_{14} \\
 y_{12} & 1 & y_{23} & y_{24} \\
 y_{13} & y_{23} & 1 & y_{34} \\
 y_{14} & y_{24} & y_{34} & 1 
 \end{pmatrix} \,\,\, = \,\,\, 0 .
 \end{equation}
 This is a system of $10$ quadratic equations in $10$ unknowns.
For general values of the $6$ parameters $c_{ij}$, these equations
have $14$ solutions. Eight of these solutions
have ${\rm rank}(X) = 3 $ and ${\rm rank}(Y) = 1$
and they are defined over $\Q(c_{ij})$.
The other six solutions form an irreducible
variety over $\Q(c_{ij})$ and they satisfy
${\rm rank}(X) = {\rm rank}(Y) = 2$.
This case distinction reflects the boundary structure
of the dual body to the six-dimensional elliptope $\mathcal{E}_4$:
\begin{equation}
\label{dualtoE4}
\partial_a \mathcal{E}_4^\Delta \quad = \quad
\{ {\rm rank}(Y) \leq 2 \}^* \,\cup \,
\{ {\rm rank}(Y) = 1 \}^* . \quad
\end{equation}
Indeed, the boundary of $\mathcal{E}_4$ is the quartic hypersurface
$\{ {\rm rank}(Y) \leq 3 \}$,
its singular locus is the degree $10$ threefold
$\{ {\rm rank}(Y) \leq 2\}$, 
and, finally, the singular locus of that threefold
consists of eight matrices of rank one:
\[
 \{ {\rm rank}(Y) = 1 \} \,\, = \,\,
\bigl\{ \,(u_1,u_2,u_3,u_4)^T \cdot (u_1,u_2,u_3,u_4) \,\,:\,\, u_i \in \{-1,+1 \} \,\bigr\}. 
\]
The last two strata are dual to the hypersurfaces in (\ref{dualtoE4}).
The second component in (\ref{dualtoE4}) consists of eight hyperplanes,
while the first component is irreducible of degree $18$. 
The corresponding  projective hypersurface is defined by an  irreducible homogeneous polynomial
of degree $18$ in seven unknowns $c_{12},c_{13}, c_{14}, c_{23}, c_{24},c_{34}, t^*$.
That polynomial has degree $6$  in the special unknown $t^*$.
Hence, the algebraic degree of our SDP is $6$ when ${\rm rank}(Y) = 2$.
\qed
\end{example}

In algebraic geometry, it is natural to regard 
the matrix pairs $(X,Y)$ as points in the product of projective spaces
$\PP(\SSS^n) \times \PP(\SSS^n)^*$. This has the advantage that
 solutions of (\ref{eq:KKTsdp}) are invariant under scaling, i.e. whenever $(X,Y)$ is a solution, then so is $(\lambda X,\mu Y)$ for any nonzero $\lambda,\mu \in \R$. In that setting, there are no worries about complications due to solutions at infinity.

For the algebraic formulation we assume that,
without loss of generality, 
\[
b_1=1,b_2=0,b_3=0,\dots,b_m=0.
\]
This means that $\langle A_1, X \rangle = 1$
plays the role of the homogenizing variable.
Our SDP instance is specified by two linear subspaces
 of symmetric matrices:
 \[
 \mathcal{L} = \Span(A_2,A_3,\dots,A_m) \,\,\subset \,\,
\mathcal{U} = \Span(C,A_1,A_2,\dots,A_m) \,\,\subset \,\, \SSS^n.
\]
Note that we have the following identifications:
\[
 \R C + \mathcal{W} \,=\, \mathcal{U} \quad \hbox{and} \quad
\R B + \mathcal{W}^\perp \, = \, \R B + (\mathcal{L}^\perp \cap A_1^\perp)
\, = \, \mathcal{L}^\perp. 
\]
 With the linear spaces $\mathcal{L} \subset \mathcal{U}$, we  write the
  {\em homogeneous KKT equations} as
\begin{equation}
\label{eq:KKThomog}
X \in \mathcal{U},\; Y \in \mathcal{L}^\perp \text{ and } X \cdot Y = 0.
 \end{equation}
Here is an {\em abstract definition of semidefinite programming}
that might appeal to some of our readers:
Given any flag of linear subspaces
$\mathcal{L} \subset \mathcal{U} \subset \SSS^n$ 
with ${\rm dim}(\mathcal{U}/\mathcal{L}) = 2$,
locate the unique semidefinite point in the
variety~(\ref{eq:KKThomog}).
For instance, in  Example \ref{ex:elliptope2}
the space $\mathcal{L}$ consists of traceless diagonal matrices
and $\mathcal{U}/\mathcal{L} $ is spanned by
the unit matrix $B $ and one off-diagonal matrix $C$.
We seek to solve the matrix equation $X \cdot Y = 0$
where the diagonal entries of $X$ are constant
and the off-diagonal entries of $Y$ are proportional to $C$.

The formulation (\ref{eq:KKThomog}) suggests that we study the variety
$\{XY = 0\}$ for pairs of symmetric matrices $X$ and $Y$.
  In~\cite[Eqn.~(3.9)]{NRS} it was shown that this variety has the following
  decomposition into irreducible components:
\[
\{XY=0\} \,\,= \,\, \bigcup_{r=1}^{n-1} \{XY = 0\}^r \quad
\subset \,\,\, \PP(\SSS^n) \times \PP(\SSS^n)^*.
\]
Here $\{XY = 0\}^r$ denotes the subvariety consisting of pairs $(X,Y)$
where ${\rm rank}(X) \leq r$ and ${\rm rank}(Y) \leq n-r$.
This is irreducible because, by Example~\ref{ex::conormal_SDP},
 it is   the conormal variety of the variety of symmetric matrices of rank $\leq r$. 
The KKT equations describe sections of these conormal varieties:
\begin{equation}
\label{eqn::projKKT}
\{XY = 0\}^r \,\cap \, \bigl(\,\PP(\mathcal{U}) \times \PP(\mathcal{L}^\perp)\,\bigr).
\end{equation}
All solutions of a semidefinite 
optimization problem (and thus also the boundary of a spectrahedron and its dual) can be characterized by rank conditions. The main result in~\cite{NRS} describes the case when the section in \eqref{eqn::projKKT} is generic:

\begin{thm}{\rm \cite[Theorem 7]{NRS}}
\label{thm:thm7}
For generic subspaces $\mathcal{L} \subset \mathcal{U} \subset \SSS^n$ with
${\rm dim}(\mathcal{L}) = m-1$ and ${\rm dim}(\mathcal{U}) = m+1$,
the variety \eqref{eqn::projKKT} is empty unless 
\begin{equation}
\label{eq:pataki}
 \binom{n-r+1}{2} \leq m \quad \text{and} \quad \binom{r+1}{2}\leq \binom{n+1}{2}-m  .
 \end{equation}
In that case, the variety \eqref{eqn::projKKT} is reduced, nonempty and zero-dimensional and at each point the rank of $X$ and $Y$ is $n-r$ and $r$ respectively (strict complementarity). The cardinality of this variety depends only on $m$, $n$ and~$r$.
\end{thm}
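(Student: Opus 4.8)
The plan is to study the variety~\eqref{eqn::projKKT} --- a linear section of the conormal variety $V := \{XY=0\}^r = {\rm CN}(\mathcal{R}_r)$ of the symmetric determinantal variety $\mathcal{R}_r = \{X\in\PP(\SSS^n)\,:\,{\rm rank}(X)\leq r\}$ --- via a dimension count combined with genericity arguments of Bertini--Kleiman type. By Proposition~\ref{prop:CNdim}, $V$ is irreducible of dimension $\binom{n+1}{2}-2$; by Example~\ref{ex::conormal_SDP} its two projections have images $\pi_1(V) = \mathcal{R}_r$ and $\pi_2(V) = \mathcal{R}_{n-r}$, of dimensions $\binom{n+1}{2}-1-\binom{n-r+1}{2}$ and $\binom{n+1}{2}-1-\binom{r+1}{2}$. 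I would also record at the outset that the dense open set $V^\circ\subseteq V$ on which ${\rm rank}(X) = r$ is smooth ($V$ fibers there as a projective bundle over the smooth locus of $\mathcal{R}_r$), while $\{{\rm rank}(X)\leq r-1\}\cap V$ and $\{{\rm rank}(Y)\leq n-r-1\}\cap V$ are proper closed subvarieties.

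First I would do the bookkeeping. The subspaces $\PP(\mathcal{U})$ and $\PP(\mathcal{L}^\perp)$ have codimensions $c_1 := \binom{n+1}{2}-1-m$ in $\PP(\SSS^n)$ and $c_2 := m-1$ in $\PP(\SSS^n)^*$, with $c_1 + c_2 = \dim V$, so the expected dimension of~\eqref{eqn::projKKT} is $0$. Since the flag $\mathcal{L}\subset\mathcal{U}$ is generic, each of $\mathcal{U}$ and $\mathcal{L}^\perp$ is a generic subspace of its dimension; as $\pi_1$ of the section lands in $\mathcal{R}_r\cap\PP(\mathcal{U})$ and $\pi_2$ of it lands in $\mathcal{R}_{n-r}\cap\PP(\mathcal{L}^\perp)$, the section must be empty whenever $c_1 > \dim\mathcal{R}_r$, i.e.\ $\binom{n-r+1}{2} > m$, or whenever $c_2 > \dim\mathcal{R}_{n-r}$, i.e.\ $\binom{r+1}{2} > \binom{n+1}{2}-m$. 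This yields the ``empty unless~\eqref{eq:pataki}'' half.

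For the converse, assuming~\eqref{eq:pataki}, I would invoke the theory of multidegrees: the number of points, counted with multiplicity, in which $V$ is cut by generic linear subspaces of codimensions $c_1,c_2$ in the two factors equals the coefficient $\delta$ of $h_1^{c_1}h_2^{c_2}$ in the class of $V$ in the Chow ring of $\PP(\SSS^n)\times\PP(\SSS^n)^*$, and for an irreducible variety this coefficient is positive exactly when $c_1\leq\dim\pi_1(V)$ and $c_2\leq\dim\pi_2(V)$ --- which is precisely~\eqref{eq:pataki}. So for a generic flag the section~\eqref{eqn::projKKT} is nonempty, of dimension at most the expected value $0$, hence finite of cardinality $\delta$; and since $\delta$ is an intrinsic invariant of $V = {\rm CN}(\mathcal{R}_r)$ depending only on $n$, $r$ and on the index pair $(c_1,c_2)$, it depends only on $m$, $n$, $r$. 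I expect the hard part to lie here: the pair $(\PP(\mathcal{U}),\PP(\mathcal{L}^\perp))$ is \emph{not} a fully generic pair of linear subspaces --- the flag condition forces $\mathcal{U}^\perp\subseteq\mathcal{L}^\perp$ --- so one must verify that the incidence variety over the space of flags $\mathcal{L}\subset\mathcal{U}$ still dominates that space with generically finite fibres, so that a generic flag does realize $\delta$. Establishing this (for instance through an explicit parametrization exploiting that $(X,Y)\in V$ forces ${\rm im}(Y)\subseteq\ker(X)$) is the technical heart of the proof.

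Finally, for reducedness and strict complementarity I would run a Kleiman--Bertini transversality argument on the same family of sections parametrized by the flags $\mathcal{L}\subset\mathcal{U}$: for a generic flag the section meets $V$ transversally along the smooth locus $V^\circ$, and, being a zero-dimensional linear section, it meets no fixed subvariety of $V$ of dimension $<\dim V$; in particular it avoids both $\{{\rm rank}(X)\leq r-1\}\cap V$ and $\{{\rm rank}(Y)\leq n-r-1\}\cap V$. Hence every point $(X,Y)$ of~\eqref{eqn::projKKT} lies in $V^\circ$ with ${\rm rank}(X) = r$ and ${\rm rank}(Y) = n-r$ (the ranks being forced by the definition of $\{XY=0\}^r$), which is the strict complementarity, and transversality along the smooth locus makes the intersection scheme reduced. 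As in the previous step, the subtle point is to ensure that the constrained family of sections coming from flags is rich enough for these generic-position conclusions to apply.
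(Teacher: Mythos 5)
The statement you are proving is not proved in this paper at all: it is quoted verbatim (with citation) as \cite[Theorem~7]{NRS}, so there is no in-text proof to compare against. Evaluating your sketch on its own terms and against the NRS argument: your plan follows what is essentially the NRS strategy --- recognize \eqref{eqn::projKKT} as a bilinear section of the conormal variety $V = \{XY=0\}^r$, do the dimension bookkeeping (your identifications $c_1 = \binom{n+1}{2}-1-m$, $c_2 = m-1$, $c_1+c_2 = \dim V$, and the translation of $c_1 \le \dim \pi_1(V)$, $c_2 \le \dim \pi_2(V)$ into precisely \eqref{eq:pataki} are all correct), then read off cardinality from the bidegree of $V$ and obtain reducedness/strict complementarity from a Kleiman--Bertini transversality argument. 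That is the right skeleton.

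However, you have correctly flagged --- and not filled --- the load-bearing step, and it is worth stressing that it is a genuine gap rather than a formality. The bidegree $\delta_{c_1,c_2}$ counts points of $V \cap (\PP(\mathcal{U}) \times \PP(M))$ for an \emph{independent} generic pair $(\mathcal{U}, M)$; here $M = \mathcal{L}^\perp$ is forced to contain $\mathcal{U}^\perp$, a codimension-two condition, so the flag pairs form a proper (non-dense) subvariety of $Gr(m+1,\SSS^n)\times Gr\bigl(\binom{n+1}{2}-m+1,\SSS^n\bigr)$, and Kleiman's theorem does not apply off the shelf because the group $GL(n)$ acting by $(X,Y)\mapsto(gXg^T,g^{-T}Yg^{-1})$ preserves $V$ but is nowhere near transitive on the ambient product. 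What is needed --- and what you only gesture at --- is a dimension and dominance argument for the incidence variety $\{(X,Y,\mathcal{L},\mathcal{U}) : (X,Y)\in V,\ X\in\mathcal{U},\ Y\in\mathcal{L}^\perp,\ \mathcal{L}\subset\mathcal{U}\}$ over the flag variety, showing that the generic flag fiber is finite (and hence of the expected cardinality by conservation of number) and that it avoids the lower-rank strata of $V$. A fiberwise count over $V$ does show this incidence variety has dimension equal to that of the flag variety, which is the right start; but one still needs dominance and generic flatness/transversality, and that is exactly the technical content of the NRS proof. Two smaller points: the positivity criterion for multidegrees of an irreducible variety in terms of the dimensions of its two projections is itself a nontrivial theorem (true for conormal varieties, where the $\delta_{i,j}$ are polar degrees, but worth citing); and your indexing convention for which coefficient of $[V]$ gives the intersection count ($h_1^{c_1}h_2^{c_2}$ versus its complementary monomial) looks off, though this is only a bookkeeping slip.
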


The generic choice of subspaces $\mathcal{L} \subset \mathcal{U}$
corresponds to the assumption that our matrices $A_1,A_2,\ldots,A_m, B, C$
lie in a certain dense open subset in the space of all SDP instances.
The inequalities (\ref{eq:pataki}) are known as {\em Pataki's inequalities}.
If $m$ and $n$ are fixed then they give a lower bound and an upper bound
for the possible ranks $r$ of the optimal matrix of a generic SDP instance.
The variety \eqref{eqn::projKKT} represents all complex solutions of the
KTT equations for such a generic SDP instance. Its
cardinality, denoted $\,\delta(m,n,r)$, is known as the
{\em algebraic degree of semidefinite programming}.

\begin{cor}
Consider the variety of symmetric $n {\times} n$-matrices
of rank $\leq r$ that lie in the generic $m$-dimensional linear subspace
$\PP(\mathcal{U})$ of $\PP(\SSS^n)$. Its dual variety 
is a hypersurface if and only if Pataki's inequalities (\ref{eq:pataki}) hold, and 
the degree of that hypersurface is $\delta(m,n,r)$, the algebraic degree of SDP.
\end{cor}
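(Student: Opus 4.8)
The plan is to reduce the statement to Theorem~\ref{thm:thm7} via the dictionary between dual varieties, conormal varieties, and the operations of linear section and linear projection. Identify $\SSS^n$ with its dual via the trace form, let $X_r \subset \PP(\SSS^n)$ be the variety of symmetric matrices of rank $\le r$, and write $Z = X_r \cap \PP(\mathcal{U})$ for the variety in the statement. By Example~\ref{ex::conormal_SDP} we have $X_r^* = X_{n-r}$ and $\mathrm{CN}(X_r) = \{XY = 0\}^r$. A first observation is that $Z = \emptyset$ precisely when the first inequality in \eqref{eq:pataki} fails: $X_r$ has codimension $\binom{n-r+1}{2}$, so a generic linear subspace $\PP(\mathcal{U})$ of codimension $\binom{n+1}{2}-1-m$ meets it iff $\binom{n-r+1}{2} \le m$, and then $Z$ is irreducible of dimension $m - \binom{n-r+1}{2}$. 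When $Z = \emptyset$ also $Z^* = \emptyset$, consistent with the emptiness clause of Theorem~\ref{thm:thm7}, so from now on assume $\binom{n-r+1}{2}\le m$.

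Next I would identify the conormal variety of the section. Let $\pi_\mathcal{U}\colon \PP(\SSS^n)^* \to \PP(\mathcal{U})^* \cong \PP^m$ be the linear projection with center $\PP(\mathcal{U}^\perp)$, which on hyperplanes is the restriction map. The tangency criterion of Subsection~\ref{subsec::proj_dualities} shows that a hyperplane of $\PP(\mathcal{U})$ is tangent to $Z$ at a regular point $x$ exactly when some hyperplane of $\PP(\SSS^n)$ over it is tangent to $X_r$ at $x$; since a generic section is transverse to $X_r$ away from $\mathrm{Sing}(X_r)$, this yields
\[
\mathrm{CN}(Z) \,\,=\,\, \overline{(\mathrm{id}\times\pi_\mathcal{U})\bigl(\mathrm{CN}(X_r) \cap (\PP(\mathcal{U}) \times \PP(\SSS^n)^*)\bigr)}
\]
inside $\PP(\mathcal{U}) \times \PP(\mathcal{U})^*$, both sides having dimension $m-1$ by Proposition~\ref{prop:CNdim}.

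I would then compute $\deg Z^*$ by slicing with a generic line. A generic line $\ell \subset \PP(\mathcal{U})^*$ is the image under $\pi_\mathcal{U}$ of $\PP(\mathcal{L}^\perp)$ for a generic subspace $\mathcal{L}\subset\mathcal{U}$ with $\dim(\mathcal{U}/\mathcal{L})=2$, i.e.\ $\dim\mathcal{L} = m-1$ and $\dim\mathcal{U}=m+1$, exactly the setup of Theorem~\ref{thm:thm7}. Pulling the previous display back along $\ell$ and noting that $\mathrm{CN}(X_r)\cap(\PP(\mathcal{U})\times\PP(\mathcal{L}^\perp))$ is the KKT variety \eqref{eqn::projKKT}, one shows that $\mathrm{id}\times\pi_\mathcal{U}$ restricts to a bijection
\[
\{XY=0\}^r \cap \bigl(\PP(\mathcal{U})\times\PP(\mathcal{L}^\perp)\bigr) \;\longrightarrow\; \mathrm{CN}(Z) \cap \bigl(\PP(\mathcal{U})\times\ell\bigr).
\]
Granting this, Theorem~\ref{thm:thm7} closes the argument: if \eqref{eq:pataki} fails the left-hand side is empty, so a generic line misses $Z^*$ and $Z^*$ is not a hypersurface; if \eqref{eq:pataki} holds the left-hand side is nonempty, finite and reduced of cardinality $\delta(m,n,r)$, hence $Z^*$ is a hypersurface, and since $Z$ is reflexive in characteristic zero (Theorem~\ref{thm:biduality}) the conormal map $\mathrm{CN}(Z)\to Z^*$ is birational onto the hypersurface $Z^*$, giving $\deg Z^* = \#\bigl(\mathrm{CN}(Z)\cap(\PP(\mathcal{U})\times\ell)\bigr) = \delta(m,n,r)$.

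The hard part is the bijection in the last display, and this is where the word ``generic'' earns its keep. Injectivity is easy: the KKT variety is finite and $\mathcal{L}$ is generic, so distinct KKT pairs $(X,Y)$ have distinct images. The delicate point is surjectivity together with the absence of multiplicities: for generic $\mathcal{L}$ one must verify that every point of $\mathrm{CN}(Z)\cap(\PP(\mathcal{U})\times\ell)$ has a preimage $X$ that is a regular point of $Z$ and a \emph{unique} lift $Y \in \PP(\mathcal{L}^\perp)$ with $X Y = 0$ and $\rank(Y) \le n-r$. I expect this to follow from a Bertini-type transversality argument for the generic section, combined with the strict complementarity and reducedness already furnished by Theorem~\ref{thm:thm7}, but it is the step that needs the most care.
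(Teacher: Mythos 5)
Your argument follows the same route as the paper's own (terse) proof: describe $\mathrm{CN}(Z)$ as the image of $\{XY=0\}^r$ restricted over $\PP(\mathcal{U})$ and projected by $\pi_\mathcal{U}$, slice $Z^*$ with the generic line $\PP(\mathcal{L}^\perp/\mathcal{U}^\perp)$, and invoke Theorem~\ref{thm:thm7} to count points. The ``hard part'' you flag --- the bijection with the KKT variety and the implicit use of reflexivity to pass from $\#\bigl(\mathrm{CN}(Z)\cap(\PP(\mathcal{U})\times\ell)\bigr)$ to $\deg Z^*$ --- is exactly what the paper compresses into its final sentence, and your proposed Bertini-plus-reducedness resolution via Theorem~\ref{thm:thm7} is the intended one.
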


\begin{proof} The genericity of $\mathcal{U}$ ensures that
$\{ XY = 0\}^r \,\cap \,(\,\PP(\mathcal{U}) \times \PP(\mathcal{U})^*)\,$ is
the conormal variety of the given variety. We obtain its
dual  by projection onto the second factor
$\PP(\mathcal{U})^* = \PP(\SSS^n/\mathcal{U}^\perp)$. The
degree of the dual hypersurface is found by intersecting with a generic line.
The line we take is $\PP(\mathcal{L}^\perp/\mathcal{U}^\perp)$.
That intersection corresponds to the second factor
$\PP(\mathcal{L}^\perp)$ in (\ref{eqn::projKKT}).
\end{proof}

We note that the symmetry in the equations
(\ref{eq:KKThomog}) implies the duality 
\[
 \delta \bigl( m,n,r \bigr) \quad = \quad \delta \bigl(
 \binom{n+1}{2} - m, n, n-r \bigr), 
\]
first shown in \cite[Proposition~9]{NRS}.
See also \cite[Table 2]{NRS}.
Bothmer and Ranestad \cite{BR} derived an explicit
combinatorial formula for the algebraic degree of SDP.
Their result implies that $\delta(m,n,r)$ is a polynomial
of degree $m$ in $n$ when $n-r$ is fixed. 
For example, in addition to \cite[Theorem 11]{NRS}, we have
\[
 \delta(6,n,n-2) \quad = \quad 
\frac{1}{72}
 \bigl(11 n^6 - 81 n^5 + 185 n^4 - 75 n^3 - 196 n^2 + 156 n \bigr).
\]
 
The algebraic degree of SDP represents a universal upper bound on the
intrinsic algebraic complexity of optimizing a linear function over any $m$-dimensional
spectrahedron of $n {\times} n$-matrices.
The algebraic degree can be much smaller for families of instances
involving special matrices $A_i$, $B$ or $C$.

\begin{example}
Fix $n=4$ and $m= 6 =  {\rm dim}(\mathcal{E}_4) $.
Pataki's inequalities (\ref{eq:pataki}) state that
the rank of the optimal matrix is  $r = 1$ or $r=2$, and this was indeed
observed in Example  \ref{ex:elliptope2}. 
For $r=2$ we had found the algebraic degree six
when solving (\ref{E4KKT}). However, here
$B$ is the identity matrix and $A_1, A_2,A_3,A_4$ are diagonal.
When these are replaced by generic symmetric matrices, then
the algebraic degree jumps from six to 
$\, \delta(6,4,2) \, = \, 30 $.  \qed
\end{example}

We now state a result that elucidates the
decompositions in
(\ref{dualtoE3}) and~(\ref{dualtoE4}).

\begin{thm}
\label{boundarydualspec}
If the matrices $A_1,\ldots,A_m$ and $C$
in the definition (\ref{spectraP}) of the spectrahedron $P$ are sufficiently generic, then
the algebraic boundary of the dual body $P^\Delta$ is
the following union of dual hypersurfaces:
\begin{equation}
\label{eq:boundarydualspec}
 \partial_a P^\Delta \,\quad \subseteq \,\, \bigcup_{r \,{\rm as}  \,{\rm in} \,
(\ref{eq:pataki})} \!\! \{X \in \mathcal{L} \,\,|\,\,{\rm rank}(X) \leq r \}^* \end{equation}
\end{thm}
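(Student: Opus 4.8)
The plan is to follow the template of the proof of Theorem~\ref{boundarytheorem}, with the stratification of the spectrahedron $P$ by the rank of $C-\sum_i x_iA_i$ playing the role of the tangency stratification of a smooth variety, and with the Karush--Kuhn--Tucker theorem for semidefinite programming from Section~\ref{sec::sdp} linking a boundary point of $P^\Delta$ to a rank condition. Since $\partial_a P^\Delta$ is pure of codimension one, it suffices to show that every irreducible component $H$ of $\partial_a P^\Delta$ equals $\{X\in\mathcal{L}\,:\,\rank(X)\le r\}^*$ for some $r$ obeying Pataki's inequalities~(\ref{eq:pataki}); here $\{X\in\mathcal{L}:\rank(X)\le r\}$ is the projective closure of the rank-$\le r$ locus inside the affine space $C+\mathcal{W}$, so $\mathcal{L}$ is the linear span of $C$ and $A_1,\dots,A_m$.

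First I would take a generic smooth point $b$ of such a component $H$ and run semidefinite programming duality at $b$. Since $0\in{\rm int}(P)$ the primal problem $\max\{\langle b,x\rangle:x\in P\}$ is strictly feasible, and for generic data the dual~(\ref{eqn::dualConicSDP}) is strictly feasible for $b$ in a dense subset of $\partial P^\Delta$, so strong duality and the KKT theorem hold there. As $b\in\partial P^\Delta$ the common optimal value is $1$, so we obtain a primal optimum $x^*$ with $X^*:=C-\sum_i x_i^*A_i\succeq 0$ and a dual optimum $Y^*\succeq 0$ satisfying $\langle C,Y^*\rangle=1$, $\langle A_i,Y^*\rangle=b_i$ and $X^*Y^*=0$ --- i.e.\ a solution of the homogeneous KKT equations~(\ref{eq:KKThomog}), with $Y^*$ constrained to a hyperplane $\mathcal{L}_b$ inside $\mathcal{W}$ that depends on the direction $b$. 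For generic $b$ the flag $\mathcal{L}_b\subset\mathcal{U}$ is generic of the kind required in Theorem~\ref{thm:thm7}, so along $H$ the ranks $(\rank X^*,\rank Y^*)$ are a constant $(r,n-r)$ by strict complementarity, and the value $r$ that occurs satisfies~(\ref{eq:pataki}) because the variety~(\ref{eqn::projKKT}) is nonempty for it.

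Next I would identify $H$ with a determinantal dual variety. As $b$ varies over $H$, the matrix $X^*$ sweeps out a dense subset of $\{X\in\mathcal{L}:\rank(X)\le r\}$, which for generic data is irreducible of the expected dimension, and $x^*$ is one of its smooth points. The key computation is that the supporting hyperplane $\{x:\langle b,x\rangle=1\}$ of $P$ at $x^*$ contains the tangent space of this variety at $x^*$: a tangent direction $v$ gives $M:=\sum_i v_iA_i$ with $M({\rm im}\,Y^*)\subseteq{\rm im}\,X^*=\ker Y^*$, and since ${\rm im}\,Y^*\perp\ker Y^*$ this forces $\langle b,v\rangle=\langle M,Y^*\rangle=0$. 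Hence $b$ is a tangent hyperplane to $\{X\in\mathcal{L}:\rank(X)\le r\}$ and lies in its dual variety; as $H$ is a hypersurface while this dual variety is irreducible of dimension at most $m-1$, the two coincide. Finally the Corollary following Theorem~\ref{thm:thm7} shows that $\{X\in\mathcal{L}:\rank(X)\le r\}^*$ is a hypersurface exactly when $r$ obeys~(\ref{eq:pataki}), so taking the union over all components of $\partial_a P^\Delta$ yields~(\ref{eq:boundarydualspec}).

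The main obstacle is the genericity bookkeeping that validates all of these steps at once: one must check, for sufficiently generic $A_i$ and $C$, that the dual semidefinite program is strictly feasible on a dense subset of $\partial P^\Delta$; that the rank type $(r,n-r)$ of $(X^*,Y^*)$ is genuinely locally constant along each component of $\partial P^\Delta$, which is precisely where Theorem~\ref{thm:thm7} and its strict-complementarity conclusion are indispensable; that the affine family $x\mapsto C-\sum_i x_iA_i$ is transverse to the determinantal stratification of $\SSS^n$, so that dualization commutes with homogenization and a generic rank-$r$ boundary point is a smooth point of $\{X\in\mathcal{L}:\rank(X)\le r\}$; and that $b\mapsto X^*$ is dominant onto that variety, which follows from matching $\dim H=m-1$ against $\dim\{X\in\mathcal{L}:\rank(X)\le r\}=m-\binom{n-r+1}{2}$. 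It should be stressed, as for~(\ref{eq:boundaryformula}) in Theorem~\ref{boundarytheorem}, that the inclusion~(\ref{eq:boundarydualspec}) may be strict: a hypersurface permitted by Pataki's inequalities contributes to $\partial_a P^\Delta$ only if it actually meets $\partial P^\Delta$ in a regular point, and deciding this typically needs a separate, often numerical, check; the elliptope $\mathcal{E}_4$ of Example~\ref{ex:elliptope2}, where both $r=1$ and $r=2$ occur, illustrates the statement.
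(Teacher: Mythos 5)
Your proof follows the same route as the paper's: fix an irreducible component of $\partial_a P^\Delta$, take a generic boundary point, use Theorem~\ref{thm:thm7} to extract the rank $r$ of the supporting matrix and conclude it obeys Pataki's inequalities, then identify the component with $\{X\in\mathcal{L}\,:\,\rank X\le r\}^*$. The one place you are more explicit than the paper is the tangency verification via the KKT matrix $Y^*$ (computing $\langle b,v\rangle = \langle M, Y^*\rangle = 0$), which replaces the paper's appeal to biduality after observing that a generic supporting hyperplane touches $P$ at a smooth point of the rank-$r$ locus; this is a welcome clarification rather than a different argument. One small quibble in your final paragraph: dominance of $b\mapsto X^*$ is not literally a matter of "matching $\dim H=m-1$ against $m-\binom{n-r+1}{2}$" (these are unequal in general); it follows instead from the irreducibility of the conormal variety of the rank locus, which surjects onto both factors.
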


\begin{proof}
Let $\mathcal{Y}$ be any irreducible component of $\partial_a P^\Delta \subset (\PP^m)^*$.
Then $\mathcal{Y} \cap \partial P^\Delta$ is a semi-algebraic subset of
codimension $1$ in $P^\Delta$. We consider a general point in that set.
The corresponding hyperplane $H$ in the primal $\R^m$ supports
the spectrahedron $P$ at a unique point $Z$.
Then $r = {\rm rank}(Z)$ satisfies Pataki's inequalities, by
Theorem \ref{thm:thm7}. Moreover, the genericity in our choices of
$A_1, \ldots, A_m, C, H$ ensure that $Z$ is a regular point in
$\{X \in \mathcal{L} \,|\, {\rm rank}(X) \leq r \}$.
Bertini's Theorem ensures that this determinantal variety is irreducible
and that its singular locus consists only of matrices of rank $< r$.
This implies that
$\{X \in \mathcal{L} \,|\, {\rm rank}(X) \leq r \}$
is the Zariski closure of
$\{X \in P \,|\, {\rm rank}(X) = r \}$, and hence also of
a neighborhood of $Z$ in that rank stratum.
Likewise, $\mathcal{Y}$ is the Zariski closure in $(\PP^m)^*$ of 
$\mathcal{Y} \cap \partial P^\Delta$.
An open dense subset of points in $\mathcal{Y} \cap \partial P^\Delta$
corresponds to hyperplanes that support $P$ at
a rank $r$ matrix. We conclude 
$\,\mathcal{Y}^* = \{X \in \mathcal{L} \,|\, {\rm rank}(X) \leq r \}$.
Biduality completes the proof.
\end{proof}

Theorem \ref{boundarydualspec} is similar to 
Theorem \ref{boundarytheorem} in that it characterizes
the algebraic boundary in terms of dual hypersurfaces.
Just like in Section~\ref{sec::convex_hull}, we can apply this result
to compute  $\partial_a P^\Delta$. For each
rank $r$ in the Pataki range
(\ref{eq:pataki}), we need to check whether 
the corresponding dual hypersurface 
meets the boundary of $P^\Delta$. 
The indices $r$ which survive this test
determine $\partial_a P^\Delta$.

When the data that specify the spectrahedron
$P$ are not generic but special then the
computation of $\partial_a P^\Delta$ is more subtle
and we know of no formula as simple as
(\ref{eq:boundarydualspec}).
This issue certainly deserves further research.

We close this section with an interesting 
$3$-dimensional example.

\begin{example}
The {\em cyclohexatope} is a spectrahedron
with $m=3$ and $n=5$ that arises in the study
of chemical conformations \cite{GoS}. Consider the
following {\em Sch\"onberg matrix} for 
the pairwise distances $\sqrt{D_{ij}}$ among six carbon atoms:
\begin{small}
\[ \! \begin{pmatrix}
2 D_{12} &  
 D_{12}{+}D_{13} {-} D_{23} \!&\!
D_{12}{+}D_{14} {-} D_{24} \!&\!
D_{12}{+}D_{15} {-} D_{25} \!&\!
D_{12}{+}D_{16} {-} D_{26} \\
   D_{12}{+}D_{13} {-} D_{23}\! &\!
   2 D_{13} \!&\!
D_{13}{+}D_{14} {-} D_{34} \! &\!
D_{13}{+}D_{15} {-} D_{35} \!&\!
D_{13}{+}D_{16} {-} D_{36} \\
 D_{12}{+}D_{14} {-} D_{24} \!&\!
 D_{13}{+}D_{14} {-} D_{34} \!&\!
   2 D_{14} \! &\!
D_{14}{+}D_{15} {-} D_{45} \! &\!
D_{14}{+}D_{16} {-} D_{46} \\
 D_{12}{+}D_{15} {-} D_{25} \! &\!
 D_{13}{+}D_{15} {-} D_{35} \! &\!
 D_{14}{+}D_{15} {-} D_{45} \! &\!
   2 D_{15} \! &\!
   D_{15}{+}D_{56} {-} D_{56} \\
 D_{12}{+}D_{16} {-} D_{26} \! &\!
 D_{13}{+}D_{16} {-} D_{36} \! &\!
 D_{14}{+}D_{16} {-} D_{46} \! &\!
   D_{15}{+}D_{56} {-} D_{56} \! &\!
   2 D_{16}
\end{pmatrix}
\]
\end{small}
The $D_{ij}$ are the squared distances among six points in $\R^3$
if and only if this matrix is positive-semidefinite of rank $\leq 3$.
 The points represent the carbon atoms in 
{\em cyclohexane} $C_6 H_{12}$ if and only if
$D_{i,i+1} = 1$ and $D_{i,i+2} = 8/3$ for all 
indices $i$, understood cyclically.
The three diagonal distances are unknowns, so,
for cyclohexane conformations, the above Sch\"onberg matrix equals
\[
{\bf C}_6(x,y,z) \quad = \quad
 \begin{pmatrix}
2  &  
 8/3   & 
x  - 5/3  & 
11/3 - y  & 
- 2/3 \\
   8/3   & 
   2   & 
5/3 + x    & 
8/3  & 
11/3 -  z \\
x - 5/3  & 
 5/3 + x   & 
   16/3   & 
x + 5/3    & 
x - 5/3 \\
 11/3 - y   & 
 8/3   & 
 x + 5/3    & 
   2 y   & 
   8/3 \\
 - 2/3   & 
 11/3 -  z   & 
 x -  5/3   & 
   8/3   & 
   16/3
\end{pmatrix}.
\]
The {\em cyclohexatope} ${\rm Cyc}_6$ is the spectrahedron in $\R^3$ defined
by $\,{\bf C}_6(x,y,z) \succeq 0 $.
Its algebraic boundary decomposes as
$\,\partial_a {\rm Cyc}_6 =  V(f) \cup V(g)$, where
\[
 \begin{matrix}
   f & = & 27 x y z-75 x-75 y-75 z-250  \qquad \quad  {\rm and} \\
  g & = & 3 x y+3 x z+3 y z-22 x-22 y-22 z+121. \\
 \end{matrix}
\]
The conformation space of cyclohexane is the real algebraic variety
\[
\bigl\{(x,y,z) \in {\rm Cyc}_6 \,\,|\,\, {\rm rank}({\bf C}_6(x,y,z)) \leq 3\, \bigr\} 
\quad =  \quad V(f,g) \,\,\cup \,\,V(g)_{\rm sing} .
\]
The first component is the closed curve of all {\em chair conformations}.
The second component is the {\em boat conformation} point
 $\,(x,y,z) = \bigl( \frac{11}{3},  \frac{11}{3},  \frac{11}{3} \bigr) $.
 These are well-known to chemists~\cite{GoS}.
Remarkably, the cyclohexatope coincides with the convex hull of these two components.
This spectrahedron is another example of a 
convex hull of a space curve, now with an isolated point.
Semidefinite programming over the cyclohexatope
means computing the conformation which minimizes a linear function in the
squared distances $D_{ij}$. \qed
\end{example}

\section{Spectrahedral Shadows}
\label{sec::spec_shadow}
A {\em spectrahedral shadow} is the image of a spectrahedron under a linear map.
The class of spectrahedral shadows is much larger than the class of spectrahedra. In fact, it has even been conjectured that every convex basic semialgebraic set in $\R^n$ is a spectrahedral shadow~\cite{HN}.
Our point of departure is the result, known to optimization experts, that
 the convex body dual to
a spectrahedral shadow is again a spectrahedral shadow
\cite[Proposition 3.3]{GN}.

\begin{thm} \label{thm:shadowsareclosed}
The class of spectrahedral shadows is closed under duality.
\end{thm}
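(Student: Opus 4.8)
The statement is that the dual body of a spectrahedral shadow is again a spectrahedral shadow. The strategy is to trace through the two dualities that convert one description into the other, using the machinery already assembled in Section~\ref{sec::ingredients}: first, that a spectrahedron's dual is a spectrahedral shadow (Remark~\ref{rmk:specshadow}, i.e.\ the Ramana--Goldman result), and second, that projection and intersection are dual operations on cones, equation~(\ref{eq:intersectproject}). The key observation is that \emph{a spectrahedral shadow is exactly a linear section of a spectrahedral shadow in one higher batch of variables}: if $P = \pi(\mathcal{S})$ is the image of a spectrahedron $\mathcal{S}$ under a linear map $\pi$, then dualizing (in the homogenized/conic picture) turns the projection into an intersection with a linear subspace, and turns $\mathcal{S}$ into its own dual, which is a spectrahedral shadow.

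\textbf{First step.} Pass to cones. Let $P \subseteq \R^n$ be a spectrahedral shadow, so $P = \pi_L(\mathcal{S})$ for a spectrahedron $\mathcal{S} \subseteq \R^N$ and a coordinate projection $\pi_L$ (equivalently, $P = \{x : \exists y,\ (x,y) \in \mathcal{S}\}$). Homogenizing, we get a closed convex cone $\widehat{P} \subseteq \R^{n+1}$, which is the projection of the homogenized spectrahedral cone $\widehat{\mathcal{S}} \subseteq \R^{N+1}$; a homogenized spectrahedral cone is again of the form $\{v : M(v) \succeq 0\}$ for a linear pencil $M$, i.e.\ a section of the PSD cone $\SSS^k_+$ by a linear subspace. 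So $\widehat{P} = \pi(\widehat{\mathcal{S}})$ with $\widehat{\mathcal{S}} = \mathcal{V} \cap \SSS^k_+$ for a subspace $\mathcal{V}$, identifying $\R^{N+1}$ with $\mathcal{V}$ inside $\SSS^k$. Everything here uses only the standard homogenization/dehomogenization correspondence recalled in Subsection~\ref{subsec::convbody}.

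\textbf{Second step (the heart).} Dualize. By equation~(\ref{eq:intersectproject}), the dual cone $\widehat{P}^*$ of the projection $\pi_L(\widehat{\mathcal{S}})$ equals the intersection $\widehat{\mathcal{S}}^* \cap L^\perp$ — projection and intersection swap under duality. Now $\widehat{\mathcal{S}}^* = (\mathcal{V} \cap \SSS^k_+)^*$, and applying~(\ref{eq:intersectproject}) once more (with $C = \SSS^k_+$, which is self-dual) gives $\widehat{\mathcal{S}}^* = \overline{\pi_{\mathcal{V}}(\SSS^k_+)}$ modulo $\mathcal{V}^\perp$; in other words $\widehat{\mathcal{S}}^*$ is itself a spectrahedral shadow — the image of the PSD cone under a linear map, exactly as in Remark~\ref{rmk:specshadow}. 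Therefore $\widehat{P}^* = \widehat{\mathcal{S}}^* \cap L^\perp$ is the intersection of a spectrahedral shadow with a linear subspace, which is plainly again a spectrahedral shadow (restrict the linear projection to the preimage of that subspace). Dehomogenizing, $P^\Delta$ is a spectrahedral shadow.

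\textbf{The main obstacle.} The real subtlety is \emph{closedness}, and it surfaces in two places: the biduality $(\widehat{P}^*)^* = \widehat{P}$ needs $\widehat{P}$ to be a \emph{closed} convex cone, and identity~(\ref{eq:intersectproject}) only asserts $(C \cap L)^* = \overline{\pi_L(C^*)}$ up to closure. So I would need to argue that the spectrahedral shadows produced along the way really are closed — which is a genuine point, since linear images of closed sets need not be closed. The clean way to handle this is to invoke that a spectrahedral shadow, being defined by a first-order formula over the reals, is semialgebraic, and to combine that with a compactness/recession-cone argument, or simply to cite that dual bodies of full-dimensional convex bodies containing $0$ in the interior are automatically compact (so $P^\Delta$ is closed for free once $P$ is full-dimensional) — which is the standing assumption under which $P^\Delta$ is defined in this paper. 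Handling the degenerate cases (lower-dimensional $P$, or $0 \notin \mathrm{int}(P)$) is where the bookkeeping lives, but the geometric content is entirely captured by the projection/intersection duality together with Ramana--Goldman.
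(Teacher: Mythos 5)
Your proof is correct in its essential structure and uses the same two ingredients as the paper: Ramana--Goldman (dual of a spectrahedron is a spectrahedral shadow, Remark~\ref{rmk:specshadow}) and the projection/intersection duality~(\ref{eq:intersectproject}). The main difference is presentational: the paper works affinely and simply writes down the spectrahedron $Q=\{U\in\SSS^n_+\mid\langle C,U\rangle\leq 1,\ \langle B_j,U\rangle=0\ \forall j\}$ and the projection $\pi$ so that $P^\Delta=\overline{\pi(Q)}$, whereas you homogenize to the cone picture and reason abstractly, recovering the same conclusion by chaining two applications of~(\ref{eq:intersectproject}).

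The one place your write-up is not watertight is exactly the closedness point you flag. The paper handles it by citing \cite[Corollary 3.4]{GN}, which states outright that the \emph{closure} of a spectrahedral shadow is again a spectrahedral shadow; that single reference disposes of all your ``$\overline{\pi(\,\cdot\,)}$'' worries at once. Your proposed workaround --- that $P^\Delta$ is automatically compact when $0\in\mathrm{int}(P)$ --- tells you $P^\Delta$ is closed, but not that it coincides with the (possibly non-closed) shadow $\pi(Q)$; so by itself it does not close the gap. The compactness route \emph{can} be made to work, but the correct way to phrase it is at the level of $Q$, not $P^\Delta$: under the standing assumption that $C$ is positive definite, the spectrahedron $Q$ above is compact, hence its linear image $\pi(Q)$ is compact, hence already closed, and the closure operation is vacuous. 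Either fix --- citing \cite[Cor.~3.4]{GN}, or arguing compactness of $Q$ --- finishes the argument; as written, you gesture at both without nailing either. Everything else in your proposal, including the reduction ``spectrahedral shadow intersected with a subspace is again a spectrahedral shadow'' and the exactness of $(\pi_L(D))^* = D^*\cap L$, is correct.
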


\begin{proof}[Construction]
A spectrahedral shadow can be written in the form
\[
P \,\,= \,\,\, \left\{ x\in \R^m \,\big|\; \exists\, y\in \R^p \text{ with } C + \sum_{i=1}^m x_i A_i + \sum_{j=1}^p y_j B_j\succeq 0 \right\}.
 \]
 An expression for the dual body $P^\Delta$ is obtained by
 the following variant of
  the construction in Remark~\ref{rmk:specshadow}.
We consider the same linear map as before:
  $$
\pi : \SSS_+^n \rightarrow \R^m,\,\,
U \mapsto (\langle A_1,U\rangle,\dots,\langle A_m,U\rangle).
$$
 We apply this linear map $\pi$ to the spectrahedron
  \[
Q \,\, = \,\, \left \{U \in \SSS^n_+ \,\,|\,\, \langle C,U \rangle \leq 1 \text{ and }
  \langle B_1,U \rangle = \cdots   = \langle B_p \,,U \rangle = 0
   \right\}.
\]
The closure of the spectrahedral shadow $\pi(Q)$ equals
the dual convex body $ P^\Delta$.
This closure is itself a spectrahedral shadow, by
 \cite[Corollary 3.4]{GN}.
\end{proof}

We now consider the following problem: Given a real variety
$X \subset \R^n$, find a representation of its convex hull
${\rm conv}(X)$ as a spectrahedral shadow.
A systematic approach to computing such representations
was introduced by Lasserre \cite{Las}, and further
developed by Gouveia {\it et al.}~\cite{GPT}.
It is based on the relaxation of non-negative
polynomial functions on $X$ as sums of squares in
the coordinate ring $\R[X]$.
This approach is known as {\em moment relaxation},
in light of the duality between positive polynomials and
moments of measures.

We shall begin by exploring these ideas for homogeneous polynomials
of even degree $2d$ that are non-negative on $\R^n$. These form a
cone in a real vector space of dimension $N=\binom{d+n-1}{d}$.
Inside that cone lies the smaller {\em SOS cone} of polynomials $p$
that are sums of squares of polynomials of degree~$d$:
\begin{equation}
\label{eq:SOS}
 p \quad = \quad  q_1^2  + q_2^2  + \,\cdots\, + q_N^2.
 \end{equation}
 By Hilbert's Theorem \cite{Mar}, this inclusion of convex cones is strict unless
 $(n,2d) $ equals $(1,2d)$ or $(n,2)$ or $(3,4)$.
 The SOS cone is easily seen to be a spectrahedral shadow.
Indeed, consider an unknown symmetric matrix $Q \in \SSS^N$ and write
 $p = v^T Q v$ where $v$ is the vector of all $N$ monomials of degree $d$.
The matrix $Q$ is positive semidefinite if it has a Cholesky factorization $Q=C^TC$. The resulting identity $\,p =  (C v)^T  (Cv) \,$ can be rewritten as (\ref{eq:SOS}). Hence the SOS cone is the image of $\SSS_+^N$ under the linear map   $\,Q \mapsto v^T Q v$.

Recent work of Nie \cite{Nie} studies the boundaries of our two cones
via computations with the discriminants we encountered at the end of Section~\ref{subsec::proj_dualities}.
 
\begin{prop}[Theorem 4.1 in \cite{Nie}]
\label{prop:nie}
The algebraic boundary of the cone of homogeneous polynomials $p$
of degree $2d$ that are non-negative on $\R^n$ is
given by the discriminant of a polynomial $p$ with unknown coefficients.
This discriminant is the irreducible hypersurface dual to 
the Veronese embedding 
\[
\PP^{n-1} \hookrightarrow \PP^{N-1},\,
(x_1:\dots:x_{n}) \mapsto (x_1^d:x_1^{d-1}x_2:\dots:x_n^d)
\]
The degree of this discriminant is $\, n(2d-1)^{n-1}$.
\end{prop}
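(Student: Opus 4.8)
Write $\mathcal{P}=\mathcal{P}_{n,2d}$ for the cone of forms of degree $2d$ in $x_1,\dots,x_n$ that are nonnegative on $\R^n$, viewed inside the ambient vector space of all such forms; let $\mathcal{V}$ be the Veronese variety of the statement, whose dual variety $\mathcal{V}^*$ is the discriminant hypersurface $V(\Delta)$ in the (dual) space of degree-$2d$ forms --- concretely, $\mathcal{V}^*$ is the closure of the locus of forms $p$ for which $\{p=0\}\subset\PP^{n-1}$ has a singular point --- and write $v\colon\PP^{n-1}\to\mathcal{V}$ for the Veronese parametrization. The plan is to identify the topological boundary $\partial\mathcal{P}$, up to Zariski closure, with $V(\Delta)=\mathcal{V}^*$ by a first-order (Lagrange/KKT-type) argument in the spirit of the proof of Theorem~\ref{thm:dualoptvalue}, and then to read off the degree from the classical description of the discriminant hypersurface.

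First I would note that the interior of $\mathcal{P}$ is the open cone of \emph{positive definite} forms, those $p$ with $p(x)>0$ for all $x\in\R^n\setminus\{0\}$: positivity on the unit sphere is an open condition, while a nonnegative $p$ vanishing at some $x^*\neq 0$ fails to be interior, since $p-\varepsilon(x_1^2+\cdots+x_n^2)^d$ is negative at $x^*$ for every $\varepsilon>0$. Hence a form $p\in\partial\mathcal{P}$ has a point $x^*\in\R^n\setminus\{0\}$ with $p(x^*)=0$, and since $p\geq 0$ this $x^*$ is a global minimizer, so $\nabla p(x^*)=0$. Thus $[x^*]\in\PP^{n-1}$ is a singular point of $\{p=0\}$; equivalently, the coefficient vector of $p$, regarded in the dual projective space, represents a hyperplane containing the embedded tangent space to the smooth variety $\mathcal{V}$ at $v([x^*])$, because that tangent space is spanned by $v([x^*])$ together with its first partial derivatives, on which the hyperplane evaluates to $p(x^*)$ and to the $\partial_i p(x^*)$ respectively. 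Therefore the coefficient vector of $p$ lies on $\mathcal{V}^*=V(\Delta)$, and we conclude $\partial\mathcal{P}\subseteq V(\Delta)$.

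To upgrade this inclusion to an equality of Zariski closures I would invoke the classical fact \cite{GKZ} that the discriminant of a form of degree $2d$ in $n$ variables --- i.e.\ the dual variety $\mathcal{V}^*$ --- is an irreducible hypersurface of degree $n(2d-1)^{n-1}$ (irreducibility because $\mathcal{V}^*$ is the image of the irreducible conormal variety, cf.\ Proposition~\ref{prop:CNdim} and the discussion following it; the degree from the general formula for the degree of a dual variety applied to the Veronese). Since $\mathcal{P}$ is a proper, closed, full-dimensional convex cone, $\partial\mathcal{P}$ is a nonempty semialgebraic set of codimension one, and hence its Zariski closure $\partial_a\mathcal{P}$ is a nonempty variety of codimension one; being contained in the irreducible hypersurface $\mathcal{V}^*$, it cannot be a proper subvariety, so $\partial_a\mathcal{P}=\mathcal{V}^*=V(\Delta)$. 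The equation $\Delta$ therefore has degree $n(2d-1)^{n-1}$, as claimed. (As in Sections~\ref{sec::convex_hull} and~\ref{sec::sdp}, the inclusion $\partial\mathcal{P}\subsetneq V(\Delta)$ is strict as a relation between real point sets, since the discriminant also parametrizes sign-changing forms, but the Zariski closures coincide.)

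The substantive content is thus entirely classical --- the irreducibility and degree of the discriminant hypersurface --- and is quoted from \cite{GKZ} rather than reproved. Everything else in the argument is elementary; the single point requiring a little care is the verification that $\partial_a\mathcal{P}$ genuinely has codimension one (immediate, since $\mathcal{P}$ is full-dimensional, closed and proper, and the dimension of a semialgebraic set equals that of its Zariski closure), which is exactly what makes the comparison with the irreducible $\mathcal{V}^*$ conclusive.
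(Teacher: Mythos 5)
Your proof is correct and follows the same route as the paper's, but it is substantially more careful. The paper's own argument is a three-sentence sketch: it recalls that the discriminant vanishes exactly when the projective hypersurface $\{p=0\}$ has a singular point, asserts that a boundary form of the nonnegative cone has such a real singular point, and then cites \cite{GKZ} for the degree. You spell out the two steps the paper leaves implicit and which are actually needed to get \emph{equality} of the algebraic boundary with $V(\Delta)$ (not merely an inclusion): (i) the $\varepsilon$-perturbation argument that identifies the interior of $\mathcal{P}$ with the positive definite forms, whence any boundary form has a real zero $x^*\neq 0$ which, being a global minimizer of $p\geq 0$, has $\nabla p(x^*)=0$ and thus places the coefficient vector of $p$ on the dual of the Veronese; and (ii) the upgrade from the set-theoretic inclusion $\partial\mathcal{P}\subseteq V(\Delta)$ to the Zariski-closure identity $\partial_a\mathcal{P}=V(\Delta)$, via the fact that $\partial_a\mathcal{P}$ has codimension one and $V(\Delta)$ is irreducible. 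That last step is genuinely necessary and the paper does not mention it, so your version is the more complete proof of the stated claim. One small point: for your tangent-space computation to read off $p(x^*)$ and $\partial_ip(x^*)$ from the pairing of the coefficient vector with the Veronese point and its partials, the Veronese must be the one of degree $2d$ into the space of degree-$2d$ forms, which is also what the degree formula $n(2d-1)^{n-1}$ requires; the displayed map in the Proposition (degree $d$ monomials) is evidently a typo, and you implicitly and correctly work with the degree-$2d$ Veronese throughout.
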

\begin{proof}
 The discriminant of $p$ vanishes
 if and only if there exists $x \in \PP^{n-1}$ with
  $p(x) = 0$ and $\nabla p \big|_x=0$. If $p$ is in the boundary
  of the cone of positive polynomials then such a real point $x$
  exists. For the degree formula see~\cite{GKZ}.
   \end{proof}
 
 Results similar to Proposition \ref{prop:nie}
 hold when we restrict to polynomials $p$
 that lie in linear subspaces. This is
  why the {\em $A$-discriminants} $\Delta_A$
  from Section~\ref{subsec::proj_dualities} are relevant.
  We show this for a 
  $2$-dimensional family of polynomials.
  
\begin{figure}
\vskip -0.5cm
\centering\includegraphics[width=0.6\textwidth]{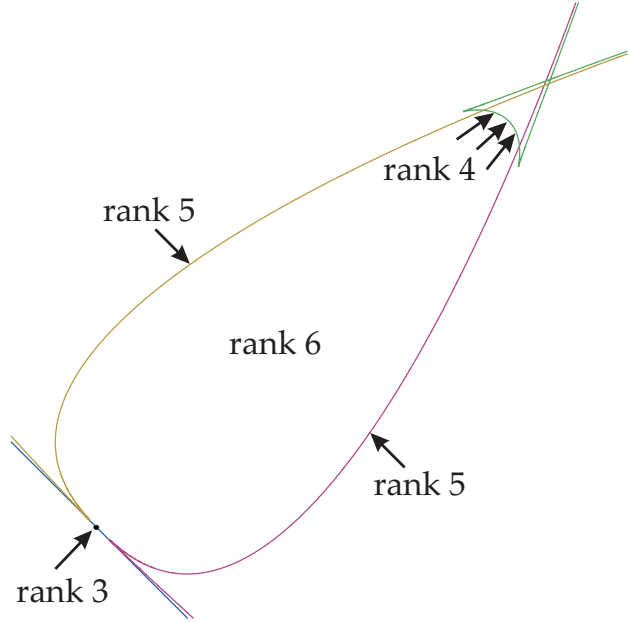}
\vskip -0.3cm
\caption{The discriminant in Example \ref{ex:fab}
defines a curve in the $(a,b)$-plane. 
The {\em spectrahedral shadow} $\mathcal{C}$ is the set of points where
the ternary quartic $f_{a,b}$ is SOS. The  ranks of the corresponding SOS 
matrices $Q$ are indicated.\label{fig:sos2}}
\vskip -0.1cm
\end{figure}  

 \begin{example}
 \label{ex:fab}
 Consider the two-dimensional family of ternary quartics
 \[ f_{a,b}(x,y,z) \quad = \quad x^4 \,+\,y^4 \,+\, ax^3 z\, +\, a y^2 z^2 
 \,+\, b y^3z \,+\, b x^2z^2 \,+ \,(a+b)z^4. \]
Here $a$ and $b$ are parameters. Such a polynomial
is non-negative on $\R^3$ if and only if it is a sum of squares, by
Hilbert's Theorem. This
condition defines a closed convex region $\mathcal{C}$ in the
$(a,b)$-plane $\R^2$. It is non-empty because $\, (0,0) \in \mathcal{C}$. Its
boundary $\partial_a(\mathcal{C})$ is derived from the {\em
$A$-discriminant} $\Delta_A$, where
\[
A \quad = \quad \begin{pmatrix}
4  & 0 & 3 & 0 & 0 & 2 & 0 \\  
0  & 4 & 0 & 2 & 3 & 0 & 0 \\
0  & 0 & 1 & 2 & 1 & 2 & 4 
\end{pmatrix}.
\]
  This $A$-discriminant is an irreducible homogeneous polynomial of
  degree $24$ in the seven coefficients. 
  What we are interested in here is the {\em specialized discriminant}
  which is obtained from $\Delta_A$ by substituting the vector of coefficients
$(1,1,a,a,b,b,a+b)$ corresponding to our polynomial $f_{a,b}$. The
specialized discriminant is an inhomogeneous polynomial of degree $24$ in
the two unknowns $a$ and $b$, and it is no longer irreducible. A computation reveals that
it is the product of four irreducible factors whose degrees are $1$, $5$, $5$ and $13$.

The linear factor equals $a+b$.
The two factors of degree $5$ are
\begin{small}
\[
\begin{matrix}
256 a^2 {-} 27 a^5 {+} 512 a b {+} 144 a^3 b {-} 27 a^4 b {+} 256 b^2 {-} 128 a b^2 {+}
144 a^2 b^2 {-} 128 b^3 {-} 4 a^2 b^3 {+} 16 b^4, \\
  256 a^2 {-} 128 a^3 {+} 16 a^4 {+} 512 a b {-} 128 a^2 b {+} 256 b^2 {+} 144 a^2 b^2 {-}
4 a^3 b^2 {+} 144 a b^3 {-} 27 a b^4 {-} 27 b^5.
\end{matrix}
\]
\end{small}
Finally, the  factor of degree $13$ in the specialized discriminant equals
\begin{small}
\[
\begin{matrix}
&  2916 a^{11} b^2+19683 a^9 b^4+19683 a^8 b^5+2916 a^7 b^6+2916 a^6
  b^7+ 19683 a^5 b^8 \\
 & +19683 a^4 b^9+2916 a^2 b^{11}-11664
  a^{12}-104976 a^{10} b^2- 136080 a^9 b^3-27216 a^8 b^4 \\
 & -225504
  a^7 b^5 -419904 a^6 b^6 -225504 a^5 b^7- 27216 a^4 b^8 -136080 a^3
  b^9  \\ &  -104976 a^2 b^{10}
   -11664 b^{12}+93312 a^{11}+217728 a^{10}
  b+76032 a^9 b^2 \\
 & +1133568 a^8 b^3+1976832 a^7 b^4+891648 a^6
  b^5+891648 a^5 b^6+1976832 a^4 b^7 \\
 & +1133568 a^3 b^8+76032 a^2
  b^9+217728 a b^{10}+93312 b^{11} -241920 a^{10} \\
 & -1368576 a^9
  b-2674944 a^8 b^2-1511424 a^7 b^3 -4729600 a^6 b^4 -9369088 a^5 b^5
  \\
 & -4729600 a^4 b^6-1511424 a^3 b^7-2674944 a^2 b^8 - 1368576 a b^9
  -241920 b^{10} \\
 & +663552 a^9+2949120 a^8 b+10539008 a^7 b^2+
  17727488 a^6 b^3 +9981952 a^5 b^4 \\
 & +9981952 a^4 b^5+17727488 a^3
  b^6+10539008 a^2 b^7+2949120 a b^8 +663552 b^9 \\
 & -2719744
  a^8-8847360 a^7 b-14974976 a^6 b ^2-36503552 a^5 b^3 -56360960 a^4
  b^4 \\
 & -36503552 a^3 b^5-14974976 a^2 b^6- 8847360 a b^7-2719744 b^8
  +4587520 a^7 \\
 & +25821184 a^6 b+52035584 a^5 b^2+ 50724864 a^4  b^3
  {+} 50724864 a^3 b^4 {+}52035584 a^2 b^5 \\
 & +25821184 a b^6+4587520
  b^7-6291456 a^6-31457280 a^5 b -94371840 a^4 b^2 \\
 & -138412032 a^3
  b^3-94371840 a^2 b^4-31457280 a b^5-6291456 b^6 +16777216 a^5 \\
 &
  +50331648 a^4 b+67108864 a ^3 b^2+67108864 a^2 b^3+50331648 a
  b^4+16777216 b^5 \\
 & -16777216 a^4-67108864 a^3 b-100663296 a^2
  b^2-67108864 a b^3-16777216 b^4.
\end{matrix}
\]
\end{small}
The relevant pieces of these four curves in the $(a,b)$-plane are depicted in
Figure~\ref{fig:sos2}. The line $a+b=0$ is seen in  the lower left,
the degree $13$ curve is the swallowtail in the upper right, and the two 
quintic curves form the upper-left and lower-right boundary of the enclosed
convex region $\mathcal{C}$.

For each $(a,b) \in \mathcal{C}$, the ternary quartic
$f_{a,b}$ has an SOS representation
\[
f_{a,b}(x,y,z) \,\, = \,\, (x^2,xy,y^2,xz,yz,z^2)\,\cdot\, Q\,\cdot\,
(x^2,xy,y^2,xz,yz,z^2)^T,
\]
where $Q$ is a positive semidefinite $6 {\times} 6$-matrix.
This identity gives $15$ independent linear constraints which,
together with $Q \succeq 0$, define an $8$-dimensional spectrahedron in the
$(21+2) $-dimensional space of parameters $(Q,a,b)$.
The projection of this spectrahedron onto the $(a,b)$-plane
is our convex region $\mathcal{C}$. This proves that $\mathcal{C}$ is
a spectrahedral shadow.
If $(a,b)$ lies in the interior of $\mathcal{C}$ then the fiber
of the projection is a $6$-dimensional spectrahedron.
If $(a,b)$ lies in the boundary $\partial \mathcal{C}$ then the fiber consists
of a single point. The ranks of these unique matrices are indicated in Figure~\ref{fig:sos2}. 
Notice that $\partial \mathcal{C}$ has three singular points, at which the rank drops
from $5$ to $4$ and $3$ respectively. \qed
\end{example}

We now shift towards a functional analytic point of view. The degree $d$ 
is no longer fixed, and we consider all polynomials, not just homogeneous ones.
Polynomials that are non-negative on $\R^n$ form a convex cone $\mathcal{C}$
in the infinite-dimensional real vector space $\R[x_1,\ldots,x_n]$. Its dual cone
$\mathcal{C}^*$ is the set of all linear functionals 
$\,\R[x_1,\ldots,x_n] \rightarrow \R\,$ that are non-negative on  $\mathcal{C}$.
We consider  functionals that evaluate to $1$ on the constant polynomial $1$.
These are represented by the moments of probability measures $\mu$ on $\R^n$:
\[
y_{\alpha} \,\,=\, \int_{\R^n} \! x^{\alpha} d\mu \qquad
\quad \hbox{for} \,\,\ \alpha \,\in\, \N^n.
\]
Points in $\mathcal{C}^*$ are moment sequences $(y_{\alpha}) \in \R^{\N^n}$ of 
Borel measures $\mu$ on $\R^n$.

This setup allows for an elegant and fruitful interpretation of
Lagrange duality for polynomial optimization problems
(\ref{eq::nonlinearOPT}). To keep the exposition and notation simple,
we restrict ourselves to  the  unconstrained problem 
\begin{equation*}
\begin{aligned}
& \underset{x\in \R^n}{\text{minimize}} & &f(x) =\sum_\alpha f_\alpha x^\alpha
\end{aligned}
\end{equation*}
Here we assume that $f$ is bounded from below, say $f\geq \epsilon$, and $\deg(f) = 2d$.
Our problem is equivalent to finding the best possible lower bound:
\begin{equation}
\label{eq::POP}
\begin{aligned}
& \underset{t\in \R}{\text{maximize}} & &t \\
& \text{subject to}
& & f(x)-t\geq 0\,\, \text{ for all }\, x\in \R^n.
\end{aligned}
\end{equation}

The Lagrange dual of the problem~\eqref{eq::POP} reads
\begin{equation*}
\begin{aligned}
& \underset{\mu \in \mathcal{P}}{\text{minimize}} & &\int_{\R^n} f(x) d\mu,
\end{aligned}
\end{equation*}
where $\mathcal{P}$ is the
 convex set of all Borel probability measures on $\R^n$.
We can rewrite this now
as an infinite-dimensional linear optimization problem:
\begin{equation}
\label{eq::POPdual}
\begin{aligned}
& \underset{y \in \mathcal{Y}}{\text{minimize}} & &\sum_{\alpha} f_{\alpha} y_{\alpha} \\
& \text{where}  & &
\mathcal{Y} \, := \, \left\{y\in \R^{\N^n} \,\Big |\,y_{\alpha} = \int_{\R^n} x^{\alpha} d\mu \text{ with } \mu \in \mathcal{P} \right\}.
\end{aligned}
\end{equation}

The two dual problems  \eqref{eq::POP}  and \eqref{eq::POPdual} are as difficult to solve as our original optimization problem.
There is a natural relaxation which is easier, and we can express this
either on the primal side or on the dual side. In~\eqref{eq::POP} 
we replace the constraint that $f(x) - t $ be non-negative on $\R^n$
with the easier constraint that $f(x)-t$ be a sum of squares.
We relax the dual  \eqref{eq::POPdual} by enlarging the
convex set $ \mathcal{Y}$  to the {\em infinite-dimensional spectrahedron}
consisting of all positive semidefinite
moment matrices
\[
M(y) \,=\, \begin{pmatrix}y_{\alpha+\beta}\end{pmatrix}_{\alpha,\beta \in \N^n}\succeq 0.
\]
These two relaxations are again related by Lagrange duality, but now they
represent a dual pair of semidefinite programs. Of course, when we solve
such an SDP in practise, we always restrict to a finite submatrix of $M(y)$,
usually that indexed by all monomials $x^\alpha, x^\beta$ of
some bounded degree $\leq d$.
The question of when such a relaxtion is exact and, if not, how large the
gap can be, is an active area of research in convex algebraic geometry
\cite{GPT, Lasbook, Sch}.

\smallskip

\begin{figure}
\centering\includegraphics[width=0.7\textwidth]{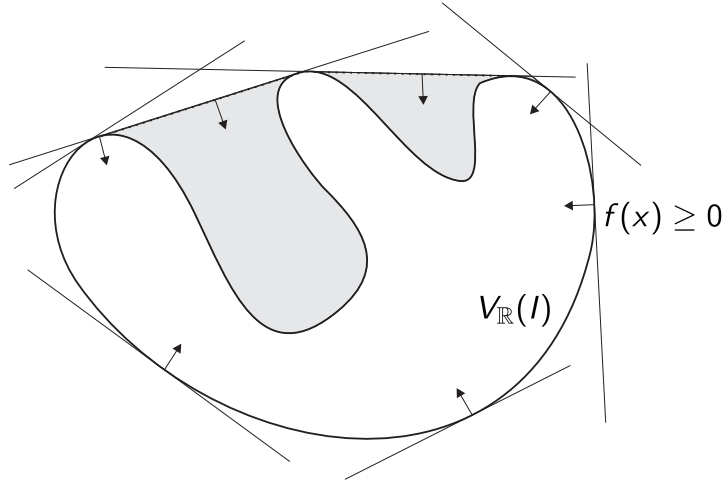}
\caption{Convex hull as intersection of half spaces.\label{fig::suphyp}}
\end{figure}

We now turn our attention to a variant of the above procedure which
approximates the convex hull of a variety by a nested
family of spectrahedral shadows. Let $I$
be an ideal in $ \R[x_1,\ldots,x_n]$ and $ V_\R(I)$ its variety  in $\R^n$.
Consider the set of affine-linear polynomials 
that are non-negative on $V_\R(I)$:
\[
{\rm NN}(I) \quad = \quad \{ \,f \in \R[x_1,\ldots,x_n]_{\leq 1}\,\,|\,\, f(x) \geq 0 \text{ for all } x\in V_\R(I) \}.
\]
In light of the biduality theorem for convex sets (cf.~Section~\ref{subsec::convbody}), 
we can characterize the (closure of) the convex hull of
our variety as follows:
\[
\overline{\conv(V_\R(I))} \quad = \quad \{x\in \R^n\, | \,f(x)\geq 0 \text{ for all } f \in {\rm NN}(I) \}.
\]
The geometry behind this formula is shown in Figure~\ref{fig::suphyp}.

Following Gouveia {\it et al.}~\cite{GPT}, we now replace the
hard constraint that $f(x)$ be non-negative on $V_\R(I)$
with the (hopefully easier) constraint that $f(x)$ be a sum of squares
in the coordinate ring $\R[x_1,\ldots,x_n]/I$. Introducing
a parameter $d$ that indicates the degree of the polynomials
allowed in that SOS representation, we consider the
following set of affine-linear polynomials:
\[ {\rm SOS}_d(I) \,\, = \,\, \bigl\{ \,f  \,\,|\,\, f - q_1^2 - \cdots - q_r^2 \, \in \, I \,\,\,
\text{for some} \,q_i \in \R[x_1,\ldots,x_n]_{\leq d} \,\bigr\}. \]
The following chain of inclusions holds:
\begin{equation}
\label{eq:inclusions1}
 {\rm SOS}_1(I) \subseteq {\rm SOS}_2(I) \subseteq {\rm SOS}_3(I) \subseteq
\,\cdots \, \subseteq \, {\rm NN}(I). 
\end{equation}
We now dualize the situation by considering
the subsets of $\R^n$ where the various
$f$ are non-negative. The {\em $d$-th theta body} of the ideal $I$ is the set
\[ \THbody_d(I) \,\,= \,\,
\bigl\{\,x\in \R^n\,|\,f(x)\geq 0 \text{ for all } f \in {\rm SOS}_d(I) \bigr\}. \]
The following reverse chain of inclusions holds among subsets in $\R^n$:
\begin{equation}
\label{eq:inclusions2}
\THbody_1(I) \,\supseteq 
\THbody_2(I) \,\supseteq 
\THbody_3(I) \,\supseteq 
\,\, \cdots \,\,\supseteq \, \overline{\conv(V_\R(I))}.
\end{equation}
This chain of outer approximations can fail to converge in general,
but there are various convergence results when the geometry is nice.
For instance, if the real variety $V_\R(I)$ is compact then
{\em Schm\"udgen's Positivstellensatz}~\cite[\S3]{Sch}
ensures asymptotic convergence. When
$V_\R(I)$ is a finite set, so that
$\conv(V_\R(I))$ is a polytope, then we have finite convergence,
that is, 
$\exists \,d \,: \, \THbody_d(I) = \conv(V_\R(I))$.
This was shown in~\cite{LLR}. For more information on
theta bodies see~\cite{GPT}. The main point we wish to record here is the following:

\begin{thm}{\rm (\cite{GPT, Lasbook})}
Each theta body $\,\THbody_d(I)$ is a spectrahedral shadow.
\end{thm}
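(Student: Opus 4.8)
The plan is to realize the theta body as the dual cone of a sum-of-squares cone, cut by one affine hyperplane, and then to carry the spectrahedral-shadow property through these operations. The stability properties I will use are all available: the class of spectrahedral shadows is closed under linear images (by definition), under affine preimages and affine sections, under Minkowski sums, under taking closures \cite[Cor.~3.4]{GN}, and --- the one deep input --- under passing to the dual cone \cite[Prop.~3.3]{GN} (the cone analogue of Theorem~\ref{thm:shadowsareclosed}).

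First I would show that $\mathrm{SOS}_d(I)$ is itself a spectrahedral shadow. Let $m=(m_1,\dots,m_\sigma)$ be the vector of all $\sigma=\binom{n+d}{d}$ monomials of degree at most $d$. By the Gram-matrix description of sums of squares, an affine-linear $f$ lies in $\mathrm{SOS}_d(I)$ if and only if there is a symmetric matrix $Q\succeq 0$ with $f-m^{T}Qm\in I$; since $f-m^{T}Qm$ has degree at most $2d$, this membership amounts to a finite linear system $L(Q,f)=0$ coming from the finite-dimensional quotient $\R[x]_{\le 2d}/(I\cap \R[x]_{\le 2d})$. Hence $\widehat K:=\{(Q,f)\,:\,Q\succeq 0,\ L(Q,f)=0\}$ is a spectrahedron (in fact a cone), and $\mathrm{SOS}_d(I)$ is its image under the linear projection $(Q,f)\mapsto f$ onto the $(n{+}1)$-dimensional space $V$ of affine-linear polynomials.

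Next, equip $V\times\R^{n+1}$ with the evaluation pairing $\langle f,(y_0,y)\rangle=f_0y_0+\sum_{i\ge 1}f_iy_i$, so that $f(x)=\langle f,(1,x)\rangle$. With $K:=\mathrm{SOS}_d(I)$ and $K^{\vee}$ its dual cone, the definition of the theta body reads $\THbody_d(I)=\{x\in\R^n\,:\,(1,x)\in K^{\vee}\}$. Since $K$ is a spectrahedral shadow, so is $K^{\vee}$: concretely $\widehat K=\widetilde{\SSS}\cap\ker L$ with $\widetilde{\SSS}=\SSS^{\sigma}_{+}\times V$, whence $\widehat K^{\vee}=\overline{(\SSS^{\sigma}_{+}\times\{0\})+(\ker L)^{\perp}}$ is the closure of a Minkowski sum of two spectrahedral shadows, and $K^{\vee}$ is obtained from it by a linear section and a coordinate projection. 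Intersecting $K^{\vee}$ with the hyperplane $\{y_0=1\}$ and identifying the result with a subset of $\R^n$ then exhibits $\THbody_d(I)$ as a spectrahedral shadow. Dualizing the construction produces the familiar moment-matrix presentation of \cite{GPT,Lasbook}: $\THbody_d(I)$ is, up to closure, the image under $\ell\mapsto(\ell(\bar x_1),\dots,\ell(\bar x_n))$ of the spectrahedron of linear functionals $\ell$ on $\R[x]_{\le 2d}/(I\cap\R[x]_{\le 2d})$ with $\ell(\bar 1)=1$ and positive semidefinite moment matrix $\bigl(\ell(\overline{m_im_j})\bigr)_{i,j}$; the inclusion ``$\supseteq$'' between the two descriptions is a one-line computation ($\ell(\bar f)=f(x)$ and $\ell(\overline{q^2})=a^{T}M_\ell a\ge 0$), and ``$\subseteq$'' is conic biduality.

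The main obstacle is exactly the duality step --- that the dual cone of a spectrahedral shadow is a spectrahedral shadow. The delicate point is the closure in $(\widetilde{\SSS}\cap\ker L)^{\vee}=\overline{\widetilde{\SSS}^{\vee}+(\ker L)^{\perp}}$, since the Minkowski sum of a closed cone and a subspace need not be closed; one therefore genuinely needs that closures of spectrahedral shadows are spectrahedral shadows. In the moment-matrix picture this is the fact that the projected moment spectrahedron need not be closed while $\THbody_d(I)$, being the polar of $\mathrm{SOS}_d(I)$, is closed; reconciling the two --- checking that $\THbody_d(I)$ is precisely that closure --- is a Hahn--Banach / conic-biduality argument and is the only non-formal ingredient. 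Everything else is bookkeeping with the stability properties listed above.
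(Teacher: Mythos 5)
Your proposal follows essentially the same route as the paper's proof: show that $\mathrm{SOS}_d(I)$ is a spectrahedral shadow via the Gram-matrix/linear-system argument, then obtain $\THbody_d(I)$ from it by dualization and a hyperplane slice, invoking closure of spectrahedral shadows under duality (Theorem~\ref{thm:shadowsareclosed}) and under affine sections. Your extra paragraph on the closure subtlety --- that $(\widetilde{\SSS}\cap\ker L)^{\vee}$ is only the \emph{closure} of a Minkowski sum, and that reconciling $\THbody_d(I)$ with the possibly non-closed projected moment spectrahedron requires conic biduality together with \cite[Cor.~3.4]{GN} --- is a genuine refinement of a point the paper's two-line proof elides when it asserts that $\mathrm{SOS}_d(I)$ ``is the cone over the convex set dual to $\THbody_d(I)$'' (true only up to closure).
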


\begin{proof}
We may assume, without loss of generality, that
the origin $0$ lies in the interior of $ {\rm conv}(V_\R(I))$.
Then ${\rm SOS}_d(I) $ is the cone over the convex set dual
to $\THbody_d(I)$. Since the class of spectrahedral shadows is closed
under duality, and under intersecting with affine hyperplanes, it suffices
to show that ${\rm SOS}_d(I)$ is a spectrahedral shadow.
But this follows from the formula $\, f - q_1^2 - \cdots - q_r^2 \, \in \, I $,
by an argument similar to that given after (\ref{eq:SOS}).
\end{proof}

In this article we have seen two rather different representations
of the convex hull of a real variety, namely,
the characterization of the algebraic boundary in Section~\ref{sec::convex_hull},
and the representation as a theta body suggested above.
The relationship between these two is not yet well understood.
A specific question is how to 
best compute the algebraic boundary of a 
spectrahedral shadow. This leads to problems in
 elimination theory that seem to be particularly challenging
 for current computer algebra systems.

We conclude by revisiting one of the examples we had seen in Section~\ref{sec::convex_hull}.

\begin{example}{\rm (Example \ref{ex:ellipticcurve} cont.)}
\label{ex:ellipticcurve2}
We revisit the curve
 $X=V(h_1,h_2)$ with
\begin{align*}
h_1 &= x^2+y^2+z^2-1,\\
h_2 &= 19x^2+21y^2+22z^2-20.
\end{align*}
Scheiderer~\cite{Sch} proved that finite convergence holds in
(\ref{eq:inclusions2}) whenever $I$ defines a curve of genus $1$,
such as $X$. We will show that $d=1$ suffices in our example,
i.e.~we will show that
$\THbody_1(I) ={\rm conv}(X)$ for
the ideal $I = \langle h_1,h_2 \rangle$.

We are interested in affine-linear forms $f$ that admit a representation
\begin{equation}
\label{eq::exth1}
f \,\,= \,\,1+ux+vy+wz \,\,\,= \,\,\,  \mu_1 h_1 + \mu_2 h_2\,+\,\sum_i q_i^2.
\end{equation}
Here $\mu_1$ and $\mu_2$ are real parameters.
Moreover, we want $f$ to lie in ${\rm SOS}_1(I)$,
so we require $\deg q_i=1$ for all $i$.
The sum of squares can be written as
\[ \sum_i q_i^2 \,\,\,= \,\,\, (1,x,y,z)\cdot Q \cdot (1,x,y,z)^T,
\qquad
\text{where} \,\, Q \in \SSS_+^4.
\]
After matching coefficients in \eqref{eq::exth1}, we obtain the spectrahedral shadow
\begin{align*}
{\rm SOS}_1(I)
 \,= \,\bigl\{(u,v,w)& \in \R^3 \,\big|\, 
 \exists \mu_1,\mu_2 : \\
 &{\footnotesize \begin{pmatrix}1+\mu_1+20\mu_2 & u & v& w\\u & -\mu_1-19\mu_2 & 0 & 0\\ v & 0 & -\mu_1-21\mu_2 & 0\\w & 0 & 0 & -\mu_1-22\mu_2 \end{pmatrix}\succeq 0} \bigr\}.
\end{align*}
Dual to this is the theta body $\,\THbody_1(I) \, = \,  {\rm SOS}_1(I)^\Delta$.
It has the representation
\[
\THbody_1(I)
\,\, = \,\, \bigl\{(x,y,z) \in \R^3 \,\big|\, \exists u_1,u_2,u_3,u_4 :
\begin{pmatrix}1 & x & y& z\\x & \frac{2}{3}-\frac{1}{3} u_4 & u_1 & u_2\\ y & u_1 & \frac{1}{3}-\frac{2}{3} u_4 & u_3\\z & u_2 & u_3 & u_4 \end{pmatrix}\succeq 0\bigl\}.
\]
We consider the ideal generated by this $4 {\times} 4$-determinant
and its derivatives with respect to $u_1,u_2,u_3,u_4$, we saturate by
the ideal of $3 {\times 3}$-minors, and then we eliminate
$u_1,u_2,u_3,u_4$. The result is the principal ideal $\langle h_4 h_5 h_6 \rangle$,
with $h_i$ as in Example~\ref{ex:ellipticcurve}.
This computation reveals that the algebraic boundary of 
${\rm conv}(X)$ consists of quadrics, and we can conclude that
$\,\THbody_1(I) = {\rm conv}(X)$.

\begin{figure}
\vskip -0.4cm
\includegraphics[width=0.4\textwidth]{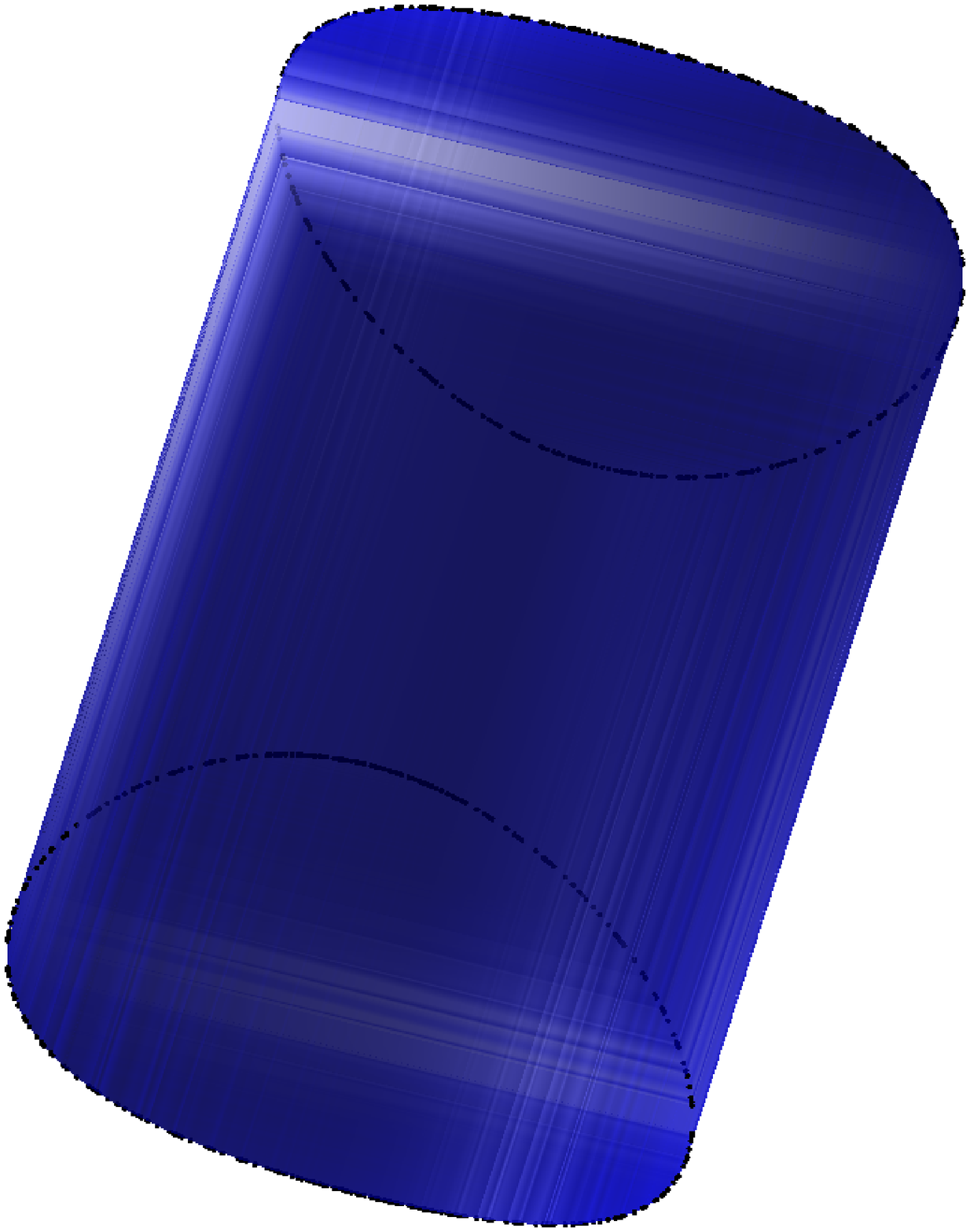}\hfill
\includegraphics[width=0.4\textwidth]{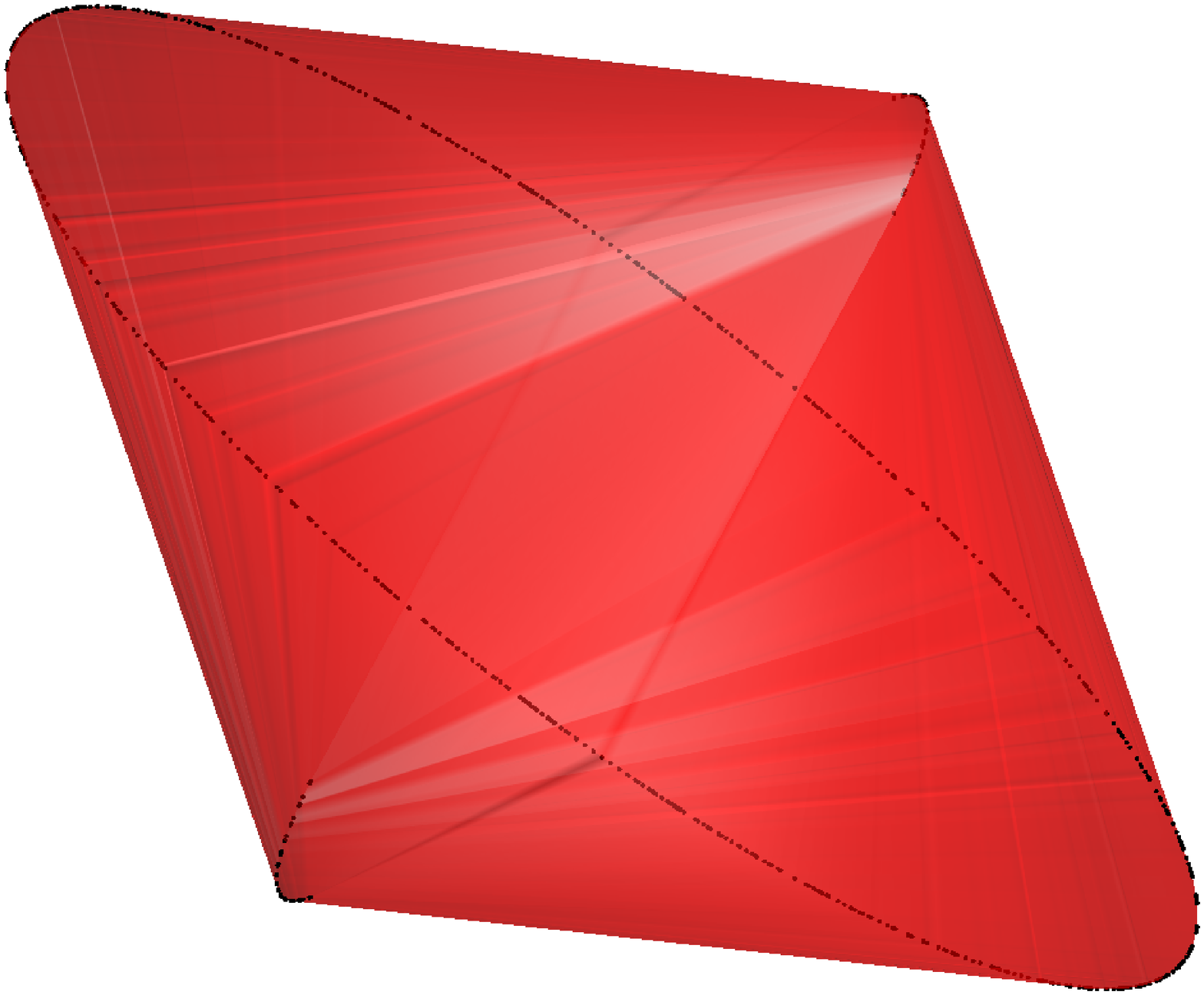}
\caption{
Convex hull of the curve in Figure \ref{fig::lazard_variety} and its dual convex body.
\label{fig::sdp_convhull}}
\end{figure}

Pictures of our convex body and its dual  are shown in
Figure~\ref{fig::sdp_convhull}.
Diagrams such as these can be drawn fairly easily for
any spectrahedral shadow in $\R^3$.
To be precise, the matrix representation of
$\,\THbody_1(I) \,$ and  $\,  {\rm SOS}_1(I)^\Delta\,$
given above can be used to rapidly sample the boundaries 
of these convex bodies, by maximizing many linear functions via
semidefinite programming.
\qed \end{example}
\medskip

\noindent {\bf Acknowledgement.}
This article grew out of three lectures given by Bernd Sturmfels
on March 22-24, 2010,  at the spring school on
Linear Matrix Inequalities and Polynomial Optimization (LMIPO)
at UC San Diego. We thank Jiawang Nie and Bill Helton for organizing that event.
We are grateful to Jo\~ao Gouveia and Raman Sanyal for
their comments on a draft of this paper.
Philipp Rostalski was supported by the Alexander-von-Humboldt Foundation through a 
Feodor Lynen postdoctoral fellowship. Bernd Sturmfels was partially supported by the
U.S.~National Science Foundation through the grants DMS-0456960 and DMS-0757207.

\medskip

\medskip

\noindent
{\bf Author's address}: Department of Mathematics,
      University of California,
      Berkeley, CA 94720,
      USA. \hfill {\tt \{philipp,bernd\}@math.berkeley.edu}
\end{document}